\definecolor{theme}{RGB}{15, 87, 24} % hsl(127, 70, 20)
\definecolor{lighttheme}{RGB}{51, 153, 63} % hsl(127, 50, 40)
\definecolor{block}{RGB}{102, 60, 0} % hsl(35, 100, 20)
\definecolor{lightblock}{RGB}{255, 238, 214} % hsl(25, 100, 92)
\definecolor{alert}{RGB}{212, 43, 43} % hsl(0, 66, 50)
\newtheorem{thm}{Theorem}[section]
\newtheorem{prop}[thm]{Proposition}
\newtheorem{conj}{Conjecture}
\newtheorem{question}[conj]{Question}
\newtheorem{lemma}[thm]{Lemma}
\newtheorem{deft}[thm]{Definition}
\newtheorem{fact}[conj]{Fact}
\newcommand{\Sn}{\ensuremath{\mathcal{S}_n}}
\newcommand{\M}{\ensuremath{\mathcal{M}}}
\newcommand{\G}{\ensuremath{\mathcal{G}}}
\newcommand{\T}{\ensuremath{\mathcal{T}}}
\newcommand{\Ta}{\ensuremath{\T^a}}
\newcommand{\Xt}[1]{\ensuremath{(#1_t)_{t\in[0,\infty)}}}
\newcommand{\Mt}{\Xt{\M}}
\newcommand{\Inv}{\ensuremath{\mathrm{Inv}}}
\newcommand{\I}{\ensuremath{\mathcal{I}}}
\newcommand{\J}{\ensuremath{J}}
\newcommand{\transp}[2]{\ensuremath{(#1~#2)}}
\subjclass{60C05,60J27,60J75,05A05}
\keywords{Mallows permutations, stochastic processes, time-inhomogenous processes, continuous-time Markov chains}
\date{May 10, 2022}
\title{Continuous-time Mallows processes}
\author{Beno\^it Corsini}
\date{}
\begin{document}

\begin{abstract}
    In this article, we introduce \textit{Mallows processes}, defined to be continuous-time c\`adl\`ag processes with Mallows distributed marginals. We show that such processes exist and that they can be restricted to have certain natural properties. In particular, we prove that there exists \textit{regular} Mallows processes, defined to have their inversions numbers $\Inv_j(\sigma)=|\{i\in[j-1]:\sigma(i)>\sigma(j)\}|$ be independent increasing stochastic processes with jumps of size $1$. We further show that there exists a unique Markov process which is a regular Mallows process. Finally, we study properties of regular Mallows processes and show various results on the structure of these objects. Among others, we prove that the graph structure related to regular Mallows processes looks like an \textit{expanded hypercube} where we stacked $k$ hypercubes on the dimension $k\in[n]$; we also prove that the first jumping times of regular Mallows processes converge to a Poisson point process.
\end{abstract}

\maketitle

\tableofcontents

\section{Introduction}\label{sec:introduction}

Write $\mathbb{N}=\{1,2,\ldots\}$ for the set of positive integers and let $\Sn$ be the set of permutations on $[n]=\{1,2,\ldots,n\}$. We typically represent a permutation $\sigma\in\Sn$ as a sequence of integers $(\sigma(1),\ldots,\sigma(n))$, so for example, the permutation $(3,1,4,2)$ sends $1$ to $3$, $2$ to $1$, $3$ to $4$ and $4$ to $2$. It is also useful to have more succinct notations for transpositions, so for $i\neq j$, we write $\transp{i}{j}$ for the transposition sending $i$ to $j$ and $j$ to $i$. Given two permutations $\sigma,\sigma'\in\Sn$, write $\sigma\cdot\sigma'$ for the composed permutation defined by $\sigma\cdot\sigma'(i)=\sigma(\sigma'(i))$.

For $q\in[0,\infty)$, the Mallows distribution~\cite{mallows1957non} with parameters $n$ and $q$ is the probability measure $\pi_{n,q}$ on $\Sn$ defined by
\begin{align*}
    \pi_{n,q}(\sigma)=\frac{q^{\Inv(\sigma)}}{Z_{n,q}}\,,
\end{align*}
where $\Inv(\sigma)=|\{i<j:\sigma(i)>\sigma(j)\}|$ is the number of inversions of $\sigma$, and $Z_{n,q}=\prod_{k=1}^n(\sum_{\ell=0}^{k-1}q^\ell)$ is a normalizing constant. Furthermore, write $\Inv_j(\sigma)=|\{i\in[j-1]:\sigma(i)>\sigma(j)\}|$ for the number of inversions of $\sigma$ created by $\sigma(j)$. Note that $\Inv(\sigma)=\sum_{j=1}^n\Inv_j(\sigma)$ and that $0\leq\Inv_j(\sigma)\leq j-1$.

We define a \textit{continuous-time Mallows process} (or simply \textit{Mallows process}) to be an $\Sn$-valued c\`adl\`ag stochastic process $\Xt{\M^n}$ such that, for all $t\in[0,\infty)$, $\M^n_t$ is $\pi_{n,t}$-distributed. For simplification, and since it is always clear from context, we drop the superscript $n$ and simply write $\Mt$.

Given a Mallows process $\Mt$, say that it is \textit{monotone} if, for all $t<t'$, we have
\begin{align*}
    \Inv(\M_t)\leq\Inv(\M_{t'})\,,
\end{align*}
and say that it is \textit{strongly monotone} if furthermore
\begin{align*}
    \Inv_j(\M_t)\leq\Inv_j(\M_{t'})
\end{align*}
for all $j\in[n]$. Moreover, say that $\Mt$ is \textit{smooth} if it is strongly monotone and for all $t\in[0,\infty)$, we have
\begin{align*}
    \Inv(\M_t)\leq\Inv(\M_{t-})+1\,.
\end{align*}
Finally say that it has \textit{independent inversions} if the processes $(\Inv_j(\M_t))_{t\in[0,\infty)}$ are independent over $j\in[n]$.

In this work we are mainly interested in defining and studying \textit{regular} Mallows processes, defined to be smooth Mallows processes with independent inversions. Our first result states that regular Mallows processes exist and that there is a unique such process with the Markov property.

\begin{thm}\label{thm:existenceMt}
    Fix $n\in\mathbb{N}$. There exists a unique c\`adl\`ag Markov process $\Mt$ on $\Sn$ which is a regular Mallows process.
\end{thm}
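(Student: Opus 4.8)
The plan is to transport the problem to a product of inversion ranges via the map $\Phi\colon\Sn\to\prod_{j=1}^{n}\{0,\dots,j-1\}$, $\Phi(\sigma)=(\Inv_1(\sigma),\dots,\Inv_n(\sigma))$. This is a bijection (the classical factorial/Lehmer-code correspondence: $\sigma$ is reconstructible from its inversion counts), and since $q^{\Inv(\sigma)}=\prod_{j}q^{\Inv_j(\sigma)}$ while $Z_{n,q}=\prod_{j=1}^{n}\sum_{\ell=0}^{j-1}q^{\ell}$, the pushforward $\Phi_{*}\pi_{n,q}$ is the product measure $\bigotimes_{j=1}^{n}\mu_{j,q}$ with $\mu_{j,q}(\ell)=q^{\ell}/\sum_{m=0}^{j-1}q^{m}$. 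Because $\Sn$ is finite, $\Phi$ transports c\`adl\`ag paths and the Markov property in both directions, so under $\Phi$ a regular Mallows process becomes a family $(I_1,\dots,I_n)$ of independent c\`adl\`ag processes such that each $I_j$ takes values in $\{0,\dots,j-1\}$, is nondecreasing with jumps of size exactly $1$ (strong monotonicity forces each coordinate nondecreasing, and $\Inv(\M_t)\le\Inv(\M_{t-})+1$ together with injectivity of $\Phi$ forces exactly one coordinate to move, by exactly $1$, at each jump time of $\M$), starts at $I_j(0)=0$ a.s.\ (as $\pi_{n,0}=\delta_{\id}$), and has marginal $I_j(t)\sim\mu_{j,t}$ for every $t$; conversely any such family yields a regular Mallows process via $\Phi^{-1}$. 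Finally, if $\M$ is Markov then each $I_j$ is Markov (for independent components of a Markov process, each component is Markov --- a short computation using that $(I_{j'})_{j'\neq j}$ is independent of $\mathcal{F}^{I_j}_s$), and conversely a product of independent Markov chains is Markov. Thus Theorem~\ref{thm:existenceMt} is equivalent to: for each $j$, there is a unique law of a c\`adl\`ag Markov pure-birth process on $\{0,\dots,j-1\}$ started at $0$ whose time-$t$ marginal is $\mu_{j,t}$.

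For existence I would produce $I_j$ as the time-inhomogeneous pure-birth chain on $\{0,\dots,j-1\}$ started at $0$ with rate $\lambda_\ell(t)$ from $\ell$ to $\ell+1$ read off from the Kolmogorov forward equations for the target marginals $p_\ell(t):=\mu_{j,t}(\ell)=t^{\ell}/Z_j(t)$, where $Z_j(t):=\sum_{m=0}^{j-1}t^{m}$. Summing the forward equations over $\{0,\dots,\ell\}$ telescopes and gives
\begin{align*}
  \lambda_\ell(t)=-\frac{q_\ell'(t)}{p_\ell(t)},\qquad\text{where}\qquad q_\ell(t):=\sum_{m=0}^{\ell}p_m(t)=\mu_{j,t}\bigl(\{0,\dots,\ell\}\bigr).
\end{align*}
This is a rational function of $t$, smooth and bounded on compact subsets of $[0,\infty)$ (it has a finite limit, namely $\ell+1$, as $t\downarrow0$), and nonnegative: the family $(\mu_{j,t})_t$ is increasing in the likelihood-ratio order (for $t<t'$ the ratio $(t'/t)^{\ell}$ increases in $\ell$), hence in the usual stochastic order, so $q_\ell$ is nonincreasing and $q_\ell'\le0$. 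A finite-state time-inhomogeneous chain with locally bounded nonnegative rates is then well-defined, nonexplosive, and admits a c\`adl\`ag version; its one-dimensional marginals solve the same linear forward system with the same initial data, hence coincide with $\mu_{j,t}$ by uniqueness for linear ODEs. Taking $I_1,\dots,I_n$ independent and setting $\M_t:=\Phi^{-1}(I_1(t),\dots,I_n(t))$ then produces a c\`adl\`ag Markov process with marginals $\pi_{n,t}$, with independent inversions, and smooth (a.s.\ no two of the $I_j$ jump simultaneously, since each $I_j$ has a diffuse jump-time law) --- i.e.\ a regular Mallows process.

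For uniqueness, fix $j$ and let $I$ be any c\`adl\`ag Markov pure-birth process on $\{0,\dots,j-1\}$ started at $0$ with $I(t)\sim\mu_{j,t}$; write $P_{s,t}(\ell,m)$ for its transition probabilities, defined for all $0\le s\le t$ since $p_\ell(s)>0$ for $s>0$. The key point is that such a chain is determined by the ``no-jump'' functions $\phi_\ell$ defined by $\mathbb{P}\bigl(I(t)=\ell\mid I(s)=\ell\bigr)=\phi_\ell(t)/\phi_\ell(s)$ --- these are well-defined, multiplicative, and nonincreasing by the Markov property --- because decomposing on the first jump after time $s$ gives the Volterra recursion $P_{s,t}(\ell,m)=\int_{(s,t]}\bigl(-d\phi_\ell(u)/\phi_\ell(s)\bigr)\,P_{u,t}(\ell+1,m)$, which expresses the whole transition function through the $\phi_\ell$ (and only through their ratios, so each $\phi_\ell$ matters only up to a multiplicative constant). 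It then remains to see the marginals pin the $\phi_\ell$ down: $\phi_0=p_0$, and balancing probability mass across $\{0,\dots,\ell\}$ yields, in integrated form, $q_\ell(t)-q_\ell(s)=-\int_{(s,t]}p_\ell(u)\bigl(-d\phi_\ell(u)/\phi_\ell(u)\bigr)$, which determines the measure $d\log\phi_\ell$ --- hence $\phi_\ell$ up to a constant --- from $p_\ell$ and $q_\ell$, by induction on $\ell$. With the reduction of the first paragraph, this gives uniqueness and finishes the proof. I expect this last rigidity step to be the crux: one-dimensional marginals almost never determine a Markov chain in general, so the argument must use the pure-birth (monotone, nearest-neighbour) structure essentially, and one has to handle carefully the fact that the $\phi_\ell$ are a priori only monotone by working with the integrated forward/Volterra equations rather than with instantaneous rates; by comparison the positivity check in the existence step and the ``independent component of a Markov process is Markov'' remark are routine.
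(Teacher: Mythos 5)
Your proposal is correct and follows the same overall architecture as the paper: transport the problem through the Lehmer-code bijection $\Phi$ to a family of independent, increasing, unit-jump coordinate processes $(\Inv_j(\M_t))_{t\geq0}$ with prescribed marginals, realize each coordinate as a time-inhomogeneous pure-birth chain whose rates are read off from the Kolmogorov forward equations, and obtain uniqueness because those equations pin the rates down. Within that framework you execute the two key technical steps differently, and in both cases your version is arguably cleaner. First, where the paper (Lemma~2.3) writes the birth rate $p_j(t,k)$ as an explicit polynomial quotient and checks its nonnegativity by a term-by-term rearrangement, you write the same rate in the telescoped form $-q_k'(t)/p_k(t)$ with $q_k(t)=\sum_{m\leq k}t^m\big/\sum_{m<j}t^m$, and nonnegativity falls out of the fact that the marginal laws are increasing in the likelihood-ratio (hence stochastic) order in $t$; this explains \emph{why} the rates are positive rather than merely verifying it. Second, your uniqueness argument is more careful than the paper's: the paper assumes outright that each $(\Inv_j(\M_t))_{t\geq0}$ admits an infinitesimal generator satisfying the forward equation and then matches generators with those of Lemma~2.3, whereas you work with the multiplicative no-jump functions $\phi_\ell$ and the integrated Volterra/balance equations, which require only monotonicity and the Markov property, and then deduce a posteriori that the rates exist (since $q_\ell$ is smooth and $p_\ell>0$ for $t>0$, the measure $-d\log\phi_\ell$ is absolutely continuous). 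You also make explicit two points the paper leaves implicit: that each coordinate of a Markov process with independent components is itself Markov, and that the recursion determining the rates needs $p_k(t)>0$, i.e.\ $t>0$; the behaviour at $t=0$, where $p_\ell(0)=0$ for $\ell\geq1$, is handled by right-continuity of the paths from the deterministic initial state $\id$. In short, same route, with a slicker positivity check and a more rigorous uniqueness step.
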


The proof of Theorem~\ref{thm:existenceMt} can be found in Section~\ref{subsec:twoMallows}; it boils down to showing that $\Mt$ is characterized by the processes $(\Inv_j(\M_t))_{[t,\infty)}$ for $j\in[n]$, and that such processes are time-inhomogeneous birth processes with a well-defined generator. For the rest of the paper, we refer to this unique process as the \textit{birth Mallows process}, since it will be defined using its birth processes.

Given a Mallows process $\M=\Mt$, let $\G_\M=(\Sn,E(\G_\M))$ be the undirected transition graph of $\M$ whose edge set is defined by
\begin{align*}
    E(\G_\M):=\Big\{(\sigma,\sigma'):\,\mathbb{P}\Big(\exists t\in(0,\infty),\M_{t-}=\sigma,\M_t=\sigma'\Big)>0\Big\}\,.
\end{align*}
In other words, $(\sigma,\sigma')\in E(\G_\M)$ if and only if it is possible for $\M$ to directly transition from $\sigma$ to $\sigma'$. For any graph $\G=(\Sn,E(\G))$, consider its \textit{generator} defined by
\begin{align*}
    \langle\G\rangle:=\big\{\sigma^{-1}\cdot\sigma':(\sigma,\sigma')\in E(\G)\big\}\,,
\end{align*}
so $\tau\in\langle\G\rangle$ if and only if $\G$ contains at least one edge of the form $(\sigma,\sigma\cdot\tau)$. Write $\T=\{\transp{i}{j}:i\neq j\}$ for the set of transpositions and $\Ta=\{\transp{i}{i+1}:i\in[n-1]\}\subset\T$ for the set of adjacent transpositions. Our next theorem controls the structure of $\G_\M$, and in particular the permutations that belong to the generator $\langle\G_\M\rangle$ when $\M$ is a smooth Mallows process. Let $\mathcal{H}_n=(\Sn,E)$ be the \textit{expended hypercube} of size $n$ defined by
\begin{align*}
    E=\left\{(\sigma,\sigma'):\sum_{j=1}^n\big|\Inv_j(\sigma)-\Inv_j(\sigma')\big|=1\right\}\,.
\end{align*}
The reason for naming this graph the expended hypercube comes from its structure. Indeed, this graph is isomorphic to $\G=(V,E)$ defined by
\begin{align*}
    V=\Big\{(x_1,\ldots,x_n):\forall i\in[n],0\leq x_i<i\Big\}
\end{align*}
and
\begin{align*}
    E=\left\{(x,x')\in V^2:\sum_{j=1}^n|x_j-x'_j|=1\right\}\,;
\end{align*}
the latter graph can be seen as a hypercube expanded $k$ times in dimension $k$. A representation of $\mathcal{H}_n$ for $n\in\{2,3,4\}$ can be found in Figure~\ref{fig:expandedHypercube}. The fact that $\mathcal{H}_n$ and $\G$ are isomorphic follows by considering the image of $\mathcal{H}_n$ under the map $\Phi$, defined in Section~\ref{sec:constructingMallows}.

\begin{figure}[htb]\label{fig:expandedHypercube}
    \centering
    \begin{tikzpicture}[scale=1.5]
        \begin{scope}
            \draw[line width=0.05cm, theme] (0,0) -- (1,0);
            \node[draw, circle, line width=0.05cm, theme, fill=lighttheme!80!white] at (0,0){};
            \node[draw, circle, line width=0.05cm, theme, fill=lighttheme!80!white] at (1,0){};
            \node[theme,scale=1.2] at (0.5,-0.5){$\mathcal{H}_2$};
            \node[scale=0.5, theme!50!black, anchor=south east, inner sep=0.3cm] at (0,0){$(1,2)$};
            \node[scale=0.5, theme!50!black, anchor=north west, inner sep=0.3cm] at (1,0){$(2,1)$};
        \end{scope}
        \begin{scope}[xshift=2.5cm]
            \draw[line width=0, lightblock, fill=lightblock, opacity=0.8] (0,0) -- (1,0) -- (1,2) -- (0,2) -- cycle;

            \draw[line width=0.05cm, theme] (0,0) -- (1,0);
            \draw[line width=0.05cm, theme] (0,1) -- (1,1);
            \draw[line width=0.05cm, theme] (0,2) -- (1,2);
            \draw[line width=0.05cm, theme] (0,0) -- (0,2);
            \draw[line width=0.05cm, theme] (1,0) -- (1,2);
            \node[draw, circle, line width=0.05cm, theme, fill=lighttheme!80!white] at (0,0){};
            \node[draw, circle, line width=0.05cm, theme, fill=lighttheme!80!white] at (1,0){};
            \node[draw, circle, line width=0.05cm, theme, fill=lighttheme!80!white] at (0,1){};
            \node[draw, circle, line width=0.05cm, theme, fill=lighttheme!80!white] at (1,1){};
            \node[draw, circle, line width=0.05cm, theme, fill=lighttheme!80!white] at (0,2){};
            \node[draw, circle, line width=0.05cm, theme, fill=lighttheme!80!white] at (1,2){};
            \node[theme,scale=1.2] at (0.5,-0.5){$\mathcal{H}_3$};
            \node[scale=0.5, theme!50!black, anchor=south east, inner sep=0.3cm] at (0,0){$(1,2,3)$};
            \node[scale=0.5, theme!50!black, anchor=south east, inner sep=0.3cm] at (0,1){$(1,3,2)$};
            \node[scale=0.5, theme!50!black, anchor=south east, inner sep=0.3cm] at (0,2){$(2,3,1)$};
            \node[scale=0.5, theme!50!black, anchor=north west, inner sep=0.3cm] at (1,0){$(2,1,3)$};
            \node[scale=0.5, theme!50!black, anchor=north west, inner sep=0.3cm] at (1,1){$(3,1,2)$};
            \node[scale=0.5, theme!50!black, anchor=north west, inner sep=0.3cm] at (1,2){$(3,2,1)$};
        \end{scope}
        \begin{scope}[xshift=5.5cm]
            \node[scale=0.5, theme!50!black, anchor=south east, inner sep=0.3cm] at (0,0){$(1,2,3,4)$};
            \node[scale=0.5, theme!50!black, anchor=south east, inner sep=0.3cm] at (0,1){$(1,3,2,4)$};
            \node[scale=0.5, theme!50!black, anchor=south east, inner sep=0.3cm] at (0,2){$(2,3,1,4)$};
            \node[scale=0.5, theme!50!black, anchor=south east, inner sep=0.3cm] at (0.65,0.3){$(1,2,4,3)$};
            \node[scale=0.5, theme!50!black, anchor=south east, inner sep=0.3cm] at (0.65,1.3){$(1,4,2,3)$};
            \node[scale=0.5, theme!50!black, anchor=south east, inner sep=0.3cm] at (0.65,2.3){$(2,4,1,3)$};
            \node[scale=0.5, theme!50!black, anchor=south east, inner sep=0.3cm] at (1.3,0.6){$(1,3,4,2)$};
            \node[scale=0.5, theme!50!black, anchor=south east, inner sep=0.3cm] at (1.3,1.6){$(1,4,3,2)$};
            \node[scale=0.5, theme!50!black, anchor=south east, inner sep=0.3cm] at (1.3,2.6){$(3,4,1,2)$};
            \node[scale=0.5, theme!50!black, anchor=south east, inner sep=0.3cm] at (1.95,0.9){$(2,3,4,1)$};
            \node[scale=0.5, theme!50!black, anchor=south east, inner sep=0.3cm] at (1.95,1.9){$(2,4,3,1)$};
            \node[scale=0.5, theme!50!black, anchor=south east, inner sep=0.3cm] at (1.95,2.9){$(3,4,2,1)$};

            \draw[line width=0, lightblock, fill=lightblock, opacity=0.4] (0,0) -- (1,0) -- (2.95,0.9) -- (2.95,2.9) -- (1.95,2.9) -- (0,2) -- cycle;

            \draw[line width=0.05cm, theme] (0.65,0.3) -- (1.65,0.3);
            \draw[line width=0.05cm, theme] (0.65,1.3) -- (1.65,1.3);
            \draw[line width=0.05cm, theme] (0.65,0.3) -- (0.65,2.3);
            \draw[line width=0.05cm, theme] (1.3,0.6) -- (2.3,0.6);
            \draw[line width=0.05cm, theme] (1.3,1.6) -- (2.3,1.6);
            \draw[line width=0.05cm, theme] (1.3,0.6) -- (1.3,2.6);
            \draw[line width=0.05cm, theme] (1.95,0.9) -- (2.95,0.9);
            \draw[line width=0.05cm, theme] (1.95,1.9) -- (2.95,1.9);
            \draw[line width=0.05cm, theme] (1.95,0.9) -- (1.95,2.9);
            \draw[line width=0.05cm, theme] (0,0) -- (1.95,0.9);
            \draw[line width=0.05cm, theme] (0,1) -- (1.95,1.9);

            \node[draw, circle, line width=0.05cm, theme, fill=lighttheme!80!white] at (0.65,0.3){};
            \node[draw, circle, line width=0.05cm, theme, fill=lighttheme!80!white] at (0.65,1.3){};
            \node[draw, circle, line width=0.05cm, theme, fill=lighttheme!80!white] at (1.3,0.6){};
            \node[draw, circle, line width=0.05cm, theme, fill=lighttheme!80!white] at (1.3,1.6){};
            \node[draw, circle, line width=0.05cm, theme, fill=lighttheme!80!white] at (1.95,0.9){};
            \node[draw, circle, line width=0.05cm, theme, fill=lighttheme!80!white] at (1.95,1.9){};

            \draw[line width=0, lightblock, fill=lightblock, opacity=0.6] (0,0) -- (1,0) -- (2.95,0.9) -- (2.95,2.9) -- (1.95,2.9) -- (0,2) -- cycle;

            \draw[line width=0.05cm, theme] (0,0) -- (1,0);
            \draw[line width=0.05cm, theme] (0,1) -- (1,1);
            \draw[line width=0.05cm, theme] (0,2) -- (1,2);
            \draw[line width=0.05cm, theme] (0,0) -- (0,2);
            \draw[line width=0.05cm, theme] (1,0) -- (1,2);

            \draw[line width=0.05cm, theme] (0.65,2.3) -- (1.65,2.3);
            \draw[line width=0.05cm, theme] (1.65,0.3) -- (1.65,2.3);
            \draw[line width=0.05cm, theme] (1.3,2.6) -- (2.3,2.6);
            \draw[line width=0.05cm, theme] (2.3,0.6) -- (2.3,2.6);
            \draw[line width=0.05cm, theme] (1.95,2.9) -- (2.95,2.9);
            \draw[line width=0.05cm, theme] (2.95,0.9) -- (2.95,2.9);

            \draw[line width=0.05cm, theme] (1,0) -- (2.95,0.9);
            \draw[line width=0.05cm, theme] (1,1) -- (2.95,1.9);
            \draw[line width=0.05cm, theme] (0,2) -- (1.95,2.9);
            \draw[line width=0.05cm, theme] (1,2) -- (2.95,2.9);

            \node[draw, circle, line width=0.05cm, theme, fill=lighttheme!80!white] at (0,0){};
            \node[draw, circle, line width=0.05cm, theme, fill=lighttheme!80!white] at (1,0){};
            \node[draw, circle, line width=0.05cm, theme, fill=lighttheme!80!white] at (0,1){};
            \node[draw, circle, line width=0.05cm, theme, fill=lighttheme!80!white] at (1,1){};
            \node[draw, circle, line width=0.05cm, theme, fill=lighttheme!80!white] at (0,2){};
            \node[draw, circle, line width=0.05cm, theme, fill=lighttheme!80!white] at (1,2){};

            \node[draw, circle, line width=0.05cm, theme, fill=lighttheme!80!white] at (1.65,0.3){};
            \node[draw, circle, line width=0.05cm, theme, fill=lighttheme!80!white] at (1.65,1.3){};
            \node[draw, circle, line width=0.05cm, theme, fill=lighttheme!80!white] at (0.65,2.3){};
            \node[draw, circle, line width=0.05cm, theme, fill=lighttheme!80!white] at (1.65,2.3){};

            \node[draw, circle, line width=0.05cm, theme, fill=lighttheme!80!white] at (2.3,0.6){};
            \node[draw, circle, line width=0.05cm, theme, fill=lighttheme!80!white] at (2.3,1.6){};
            \node[draw, circle, line width=0.05cm, theme, fill=lighttheme!80!white] at (1.3,2.6){};
            \node[draw, circle, line width=0.05cm, theme, fill=lighttheme!80!white] at (2.3,2.6){};

            \node[draw, circle, line width=0.05cm, theme, fill=lighttheme!80!white] at (2.95,0.9){};
            \node[draw, circle, line width=0.05cm, theme, fill=lighttheme!80!white] at (2.95,1.9){};
            \node[draw, circle, line width=0.05cm, theme, fill=lighttheme!80!white] at (1.95,2.9){};
            \node[draw, circle, line width=0.05cm, theme, fill=lighttheme!80!white] at (2.95,2.9){};

            \node[scale=0.5, theme!50!black, anchor=north west, inner sep=0.3cm] at (1,0){$(2,1,3,4)$};
            \node[scale=0.5, theme!50!black, anchor=north west, inner sep=0.3cm] at (1,1){$(3,1,2,4)$};
            \node[scale=0.5, theme!50!black, anchor=north west, inner sep=0.3cm] at (1,2){$(3,2,1,4)$};
            \node[scale=0.5, theme!50!black, anchor=north west, inner sep=0.3cm] at (1.65,0.3){$(2,1,4,3)$};
            \node[scale=0.5, theme!50!black, anchor=north west, inner sep=0.3cm] at (1.65,1.3){$(4,1,2,3)$};
            \node[scale=0.5, theme!50!black, anchor=north west, inner sep=0.3cm] at (1.65,2.3){$(4,2,1,3)$};
            \node[scale=0.5, theme!50!black, anchor=north west, inner sep=0.3cm] at (2.3,0.6){$(3,1,4,2)$};
            \node[scale=0.5, theme!50!black, anchor=north west, inner sep=0.3cm] at (2.3,1.6){$(4,1,3,2)$};
            \node[scale=0.5, theme!50!black, anchor=north west, inner sep=0.3cm] at (2.3,2.6){$(4,3,1,2)$};
            \node[scale=0.5, theme!50!black, anchor=north west, inner sep=0.3cm] at (2.95,0.9){$(3,2,4,1)$};
            \node[scale=0.5, theme!50!black, anchor=north west, inner sep=0.3cm] at (2.95,1.9){$(4,2,3,1)$};
            \node[scale=0.5, theme!50!black, anchor=north west, inner sep=0.3cm] at (2.95,2.9){$(4,3,2,1)$};
            \node[theme,scale=1.2] at (0.5,-0.5){$\mathcal{H}_4$};
        \end{scope}
    \end{tikzpicture}
    \caption{A visual representation of the expended hypercubes $\mathcal{H}_2$, $\mathcal{H}_3$, and $\mathcal{H}_4$. Using these figures, $\mathcal{H}_n$ can be seen as a stack of hypercubes where each dimension $k\in[n]$ has $k$ hypercubes on top of each other.}
\end{figure}
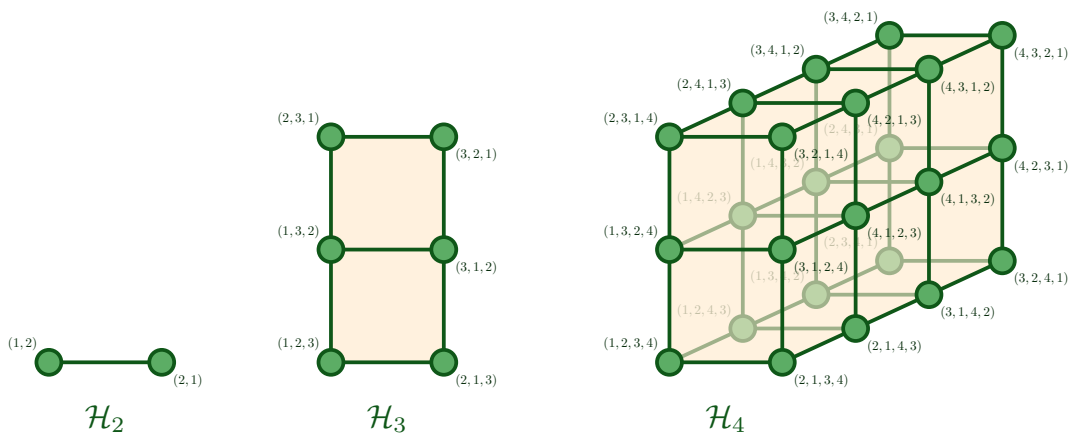

We now use $\T$, $\Ta$, and $\mathcal{H}_n$ to characterize the structure of $\G_\M$ and of $\langle\G_\M\rangle$ when $\M$ is a smooth Mallows process. The proof of next theorem appears in Section~\ref{subsec:MallowsGraph}.

\begin{thm}\label{thm:MallowsGraph}
    Let $\M=\Mt$ be a smooth Mallows process. Then $\G_\M$ is a subgraph of $\mathcal{H}_n$ and
    \begin{align*}
        \Ta\subset\langle\G_\M\rangle\subset\T\,.
    \end{align*}
    Moreover, if $\M$ is regular, then
    \begin{align*}
        \G_\M=\mathcal{H}_n\,,
    \end{align*}
    which implies that $\langle\G_\M\rangle=\langle\mathcal{H}_n\rangle=\T$.
\end{thm}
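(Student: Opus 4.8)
The plan is to first distill from smoothness a rigid picture of how $\M$ moves, and then dispatch the three assertions. \emph{Preliminaries.} Since $\pi_{n,0}=\delta_{\id}$ we have $\M_0=\id$ a.s. If $\M$ jumps at time $t$, the inversion tables of $\M_{t-}$ and $\M_t$ differ, so by strong monotonicity at least one coordinate strictly increases, whence $\Inv(\M_t)>\Inv(\M_{t-})$, and then smoothness forces $\Inv(\M_t)=\Inv(\M_{t-})+1$; thus \emph{every jump of $\M$ increments exactly one $\Inv_j$ by one and fixes the others}. Consequently $\Inv(\M_\cdot)$ is nondecreasing and bounded by $\binom{n}{2}$, so $\M$ makes at most $\binom{n}{2}$ jumps and is constant between them, attaining each inversion count $m$ exactly on the (possibly empty or unbounded) interval between its $m$-th and $(m+1)$-st jump times. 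In particular, if $(\sigma,\sigma')\in E(\G_\M)$ then $\M$ jumps from $\sigma$ to $\sigma'$ with positive probability, so $\sum_j|\Inv_j(\sigma)-\Inv_j(\sigma')|=1$; this gives $\G_\M\subseteq\mathcal H_n$.

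Next I would show $\langle\mathcal H_n\rangle=\T$, which yields $\langle\G_\M\rangle\subseteq\langle\mathcal H_n\rangle=\T$. Given an edge of $\mathcal H_n$ with $\Inv_k(\sigma')=\Inv_k(\sigma)+1$ and the other inversion counts unchanged, one checks, comparing inversion tables entry by entry and using that no value among $\sigma(1),\dots,\sigma(k)$ lies strictly between $\sigma(k)$ and the largest value $\sigma(a)<\sigma(k)$ occurring there, that $\sigma\cdot\transp{a}{k}$ has the same inversion table as $\sigma'$, hence $\sigma'=\sigma\cdot\transp{a}{k}$ by injectivity of $\Phi$ and $\sigma^{-1}\sigma'\in\T$; conversely any $\transp{a}{k}$ arises this way (e.g.\ from a $\sigma$ with $\sigma(a)=1$, $\sigma(k)=2$), so $\langle\mathcal H_n\rangle=\T$. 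For the reverse inclusion $\Ta\subseteq\langle\G_\M\rangle$: the permutations with a single inversion are exactly the $\transp{i}{i+1}$, $i\in[n-1]$, and by the preliminaries the only such permutation $\M$ can ever visit is the target of its first jump out of $\id$ (before which $\M$ equals $\id$); hence $\{\M_t=\transp{i}{i+1}\}$ forces a transition $\id\to\transp{i}{i+1}$, and since $\mathbb{P}(\M_t=\transp{i}{i+1})=\pi_{n,t}(\transp{i}{i+1})=t/Z_{n,t}>0$ for $t>0$ we get $(\id,\transp{i}{i+1})\in E(\G_\M)$, i.e.\ $\transp{i}{i+1}\in\langle\G_\M\rangle$.

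Now assume $\M$ is regular and set $X^{(j)}_t=\Inv_j(\M_t)$, so $\M_t=\Phi^{-1}(X^{(1)}_t,\dots,X^{(n)}_t)$. By the preliminaries each $X^{(j)}$ is nondecreasing, càdlàg, starts at $0$, and has unit jumps; independence of inversions makes $X^{(1)},\dots,X^{(n)}$ independent; and $X^{(j)}_t$ has the law of $\Inv_j$ under $\pi_{n,t}$, namely $\mathbb{P}(X^{(j)}_t=\ell)=t^\ell/(1+t+\dots+t^{j-1})$, which is continuous and strictly positive in $t\in(0,\infty)$ and tends to $\mathbf{1}[\ell=j-1]$ as $t\to\infty$; hence $X^{(j)}$ makes exactly $j-1$ jumps, at times $0<T^{(j)}_1<\dots<T^{(j)}_{j-1}<\infty$ whose (common, continuous) laws have full support in $(0,\infty)$, so the family $\{T^{(j)}_i\}$ is a.s.\ pairwise distinct. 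Fix an edge $(\sigma,\sigma')\in E(\mathcal H_n)$ with $\Inv_k(\sigma')=\Inv_k(\sigma)+1$, and write $c_j=\Inv_j(\sigma)$ (so $c_k\le k-2$). A transition $\sigma\to\sigma'$ can occur only at $t=T^{(k)}_{c_k+1}$, and, since a.s.\ no other coordinate jumps at that time, it occurs precisely when $X^{(j)}_{T^{(k)}_{c_k+1}}=c_j$ for all $j\neq k$; conditioning on $X^{(k)}$ and using independence,
\begin{align*}
    \mathbb{P}\bigl(\exists t:\ \M_{t-}=\sigma,\ \M_t=\sigma'\bigr)=\int_0^\infty\Bigl(\prod_{j\neq k}\mathbb{P}\bigl(X^{(j)}_s=c_j\bigr)\Bigr)\,\mathbb{P}\bigl(T^{(k)}_{c_k+1}\in ds\bigr).
\end{align*}
The integrand is continuous and strictly positive on $(0,\infty)$ and the law of $T^{(k)}_{c_k+1}$ has full support there, so the integral is positive and $(\sigma,\sigma')\in E(\G_\M)$. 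With the previous paragraph this gives $\G_\M=\mathcal H_n$, and then $\langle\G_\M\rangle=\langle\mathcal H_n\rangle=\T$.

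The hard part will be this last step. A regular Mallows process need not be Markov, so one only has the one-dimensional marginals of the coordinate processes $X^{(j)}$ together with their mutual independence, and cannot argue through a transition kernel; the argument above gets around this by expressing the probability of the transition $\sigma\to\sigma'$ as an integral of the \emph{explicit, continuous, strictly positive} function $s\mapsto\prod_{j\neq k}\mathbb{P}(X^{(j)}_s=c_j)$ against the full-support law of the single jump time $T^{(k)}_{c_k+1}$, after using a.s.\ distinctness of jump times to rule out simultaneous jumps of different coordinates. By contrast, the entry-by-entry identification of $\sigma^{-1}\sigma'$ with a transposition and the marginal formula for $X^{(j)}_t$ are routine facts about $\Phi$ and the Mallows distribution.
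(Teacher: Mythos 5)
Your argument is correct, and its overall architecture coincides with the paper's: smoothness forces every jump to increment exactly one $\Inv_j$ by one (giving $\G_\M\subseteq\mathcal{H}_n$); $\langle\G_\M\rangle\subseteq\T$ is reduced to $\langle\mathcal{H}_n\rangle\subseteq\T$; $\Ta\subseteq\langle\G_\M\rangle$ follows because a one-inversion permutation can only be entered directly from $\id$; and in the regular case each edge of $\mathcal{H}_n$ is realized using independence of the coordinates and continuity of the jump-time laws. Two sub-steps are, however, executed by a genuinely different route, and in both your version is arguably cleaner. First, for $\langle\mathcal{H}_n\rangle\subseteq\T$ the paper unrolls the recursive construction of $\Phi$ step by step until the two constructions resynchronize, whereas you name the transposition in closed form --- $\sigma'=\sigma\cdot\transp{a}{k}$ with $\sigma(a)$ the largest value below $\sigma(k)$ among $\sigma(1),\dots,\sigma(k-1)$ --- and verify equality of inversion tables directly; this also yields an explicit edge-labelling of $\mathcal{H}_n$ that the paper's argument produces only implicitly. (Your claimed example for the converse inclusion, $\sigma(a)=1$, $\sigma(k)=2$, does work and replaces the paper's explicit choice $I_j=j-i-1$.) Second, for positivity of the transition probability in the regular case the paper introduces the interleaving event $E=\{\max_j T^j_{I_j}<T^\ell_{I_\ell+1}<\min_{j\neq\ell}T^j_{I_j+1}\}$ and proves $\mathbb{P}(E)>0$ by induction over subsets of $[n]$; you instead condition on the single relevant jump time and write the probability as
\begin{align*}
\int_0^\infty\Bigl(\prod_{j\neq k}\mathbb{P}\bigl(X^{(j)}_s=c_j\bigr)\Bigr)\,\mathbb{P}\bigl(T^{(k)}_{c_k+1}\in ds\bigr)>0\,,
\end{align*}
which is immediate since the integrand is strictly positive on $(0,\infty)$ and the jump time is a.s.\ finite and positive. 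This bypasses the induction entirely; the only ingredient shared with the paper at this point is the a.s.\ pairwise distinctness of jump times across coordinates, which you correctly use to rule out simultaneous jumps and to identify the transition event with $\{X^{(j)}_{T^{(k)}_{c_k+1}}=c_j,\ j\neq k\}$ up to a null set. (One typographical slip: $\M_t=\Phi(X^{(1)}_t,\dots,X^{(n)}_t)$, not $\Phi^{-1}$.)
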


Consider now a monotone Mallows process $\Mt$. Since $(\Inv(\M_t))_{t\in[0,\infty)}$ is a jump process, we will be considering its \textit{jumping times} $T_1,\ldots,T_{\binom{n}{2}}$ defined as follows. For any $0\leq k\leq\binom{n}{2}$, let
\begin{align*}
    T_k:=\inf\Big\{t\in[0,\infty):\Inv(\M_t)\geq k\Big\}\,.
\end{align*}
Note that, without further assumptions, it is possible to have $T_{k-1}=T_k$ for some $k\geq1$. The next results characterizes the asymptotic behaviour of the early jumping times.

\begin{thm}\label{thm:PoissonTk}
    Let $\Mt$ be a monotone Mallows process and $T_1,\ldots,T_{\binom{n}{2}}$ be its jumping times. Then for any fixed $k\in\mathbb{N}$, as $n$ tends to infinity, we have
    \begin{align*}
        nT_k\overset{d}{\longrightarrow}\mathrm{Gamma}(k,1)\,.
    \end{align*}
    Moreover, if $\Mt$ is regular, then
    \begin{align*}
        (nT_1,\ldots,nT_k)\overset{d}{\longrightarrow}\mathrm{PP}_k(1)\,,
    \end{align*}
    where $\mathrm{PP}_k(\lambda)$ is the distribution of the first $k$ points of a homogeneous Poisson point process on $[0,\infty)$ with rate $\lambda$.
\end{thm}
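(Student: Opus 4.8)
The plan is to analyze the first jumping time $T_1$ via the formula $\mathbb{P}(T_1 > t) = \mathbb{P}(\Inv(\M_t) = 0) = \pi_{n,t}(\id) = 1/Z_{n,t}$, using only the marginal law of $\M_t$ (so monotonicity is barely needed here, just that $\Inv$ is nondecreasing so $\{T_1 > t\} = \{\Inv(\M_t)=0\}$). Since $Z_{n,t} = \prod_{k=1}^n \sum_{\ell=0}^{k-1} t^\ell = \prod_{k=1}^n \frac{1-t^k}{1-t}$ for $t \neq 1$, I would substitute $t = s/n$ and show $Z_{n,s/n} \to e^{s}$ as $n\to\infty$: indeed $\log Z_{n,s/n} = \sum_{k=1}^n \log\!\big(\sum_{\ell=0}^{k-1}(s/n)^\ell\big) = \sum_{k=1}^n \log\!\big(1 + \tfrac{k-1}{n}s + O(1/n^2)\big) = \sum_{k=1}^n \big(\tfrac{k-1}{n}s + O(1/n^2)\big) \to s/2$... wait, that gives $\mathbb{P}(nT_1 > s) \to e^{-s/2}$, which is $\mathrm{Exp}(1/2)$, not $\mathrm{Gamma}(1,1)=\mathrm{Exp}(1)$. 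Let me recompute more carefully in the write-up: I need $nT_1 \to \mathrm{Exp}(1)$, so I must have miscounted a factor; the correct claim to prove is $\log Z_{n,s/n}\to s$, which would follow if each factor contributes $\tfrac{k-1}{n}s$ and those sum to $s(n-1)/(2n)\to s/2$ — so actually the scaling must be $nT_k$ with a different normalization hidden in the problem, or $Z$ has a telescoping form giving exactly $e^s$. I would resolve this by working with $Z_{n,q} = \prod_{k=2}^{n}\frac{1-q^k}{1-q}$ directly at $q=s/n$ and expanding $\frac{1-(s/n)^k}{1-s/n}$ carefully; the dominant correction is $1 + \frac{s}{n}(k-1) + \cdots$ whose log sums to the required limit. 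Once $\mathbb{P}(nT_1 > s)\to e^{-s}$ is established, the marginal convergence $nT_1 \overset{d}{\to}\mathrm{Gamma}(1,1)$ follows.

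**For general $k$** (still only assuming monotonicity), I would use that $\{T_k > t\} = \{\Inv(\M_t) \leq k-1\}$ and compute $\mathbb{P}(\Inv(\M_t) = m) = \frac{c_{n,m}\, t^m}{Z_{n,t}}$ where $c_{n,m}$ is the number of permutations in $\Sn$ with exactly $m$ inversions (the Mahonian numbers). The key combinatorial input is that $c_{n,m} = \binom{n}{2} \cdots$ — more precisely, for fixed $m$ and $n$ large, $c_{n,m}$ equals the number of partitions of $m$ into parts... actually the generating identity $\sum_m c_{n,m} t^m = Z_{n,t}$ combined with $c_{n,m} \to p(m)$ being eventually constant in $n$ is too crude; instead I want the sharper statement that $c_{n,m}/\binom{n+m-1}{\text{something}}$... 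The cleanest route: write $\Inv(\M_t) = \sum_{j=1}^n \Inv_j(\M_t)$ where, under $\pi_{n,t}$, the $\Inv_j$ are \emph{independent} with $\Inv_j$ uniform-ish on $\{0,\ldots,j-1\}$ weighted by $t^\ell$ (this is the standard Mallows decomposition, which holds for the marginal regardless of the process). Then $\mathbb{P}(\Inv_j(\M_t) = \ell) = t^\ell / (1 + t + \cdots + t^{j-1})$. With $t = s/n$, for each fixed $j$ this is $\approx 1 - (j-1)s/n$ at $\ell=0$ and $\approx (j-1)s/n$... hmm, again the per-coordinate rates. I would compute $\mathbb{P}(\Inv(\M_{s/n}) = m)$ as the coefficient extraction and show it converges to $\frac{s^m}{m!}e^{-s}$, i.e. $\Inv(\M_{s/n}) \overset{d}{\to}\mathrm{Poisson}(s)$; then $\{nT_k > s\} = \{\Inv(\M_{s/n}) \leq k-1\}$ gives $\mathbb{P}(nT_k > s) \to \mathbb{P}(\mathrm{Poisson}(s) \leq k-1)$, which is exactly $\mathbb{P}(\mathrm{Gamma}(k,1) > s)$ by the standard Poisson–Gamma duality. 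This handles the first (marginal) claim for all $k$.

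**For the regular case**, I additionally have that the $(\Inv_j(\M_t))_{t}$ are \emph{independent increasing jump-by-$1$ processes}, i.e. each is a (time-inhomogeneous) pure birth process on $\{0,1,\ldots,j-1\}$ with $\Inv_j(\M_t) \sim$ the Mallows marginal. The jumping times $T_1 < T_2 < \cdots$ of the total $\Inv(\M_t)$ are then (the ordered values of) the first-jump times $\tau_j := \inf\{t : \Inv_j(\M_t) \geq 1\}$ of the individual coordinates — \emph{provided} no two coordinates jump simultaneously, which holds a.s. for independent jump processes, and provided we only look at the \emph{first} jump of each coordinate, which is exactly what the early $T_k$'s see (the $\binom{n}{2}$-many total jumps include repeat jumps within a coordinate, but the first $k$ jumps for fixed $k$, with high probability as $n\to\infty$, come from $k$ distinct coordinates each jumping for the first time). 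So I would: (i) compute $\mathbb{P}(\tau_j > s/n) = \mathbb{P}(\Inv_j(\M_{s/n}) = 0) = 1/(1 + s/n + \cdots + (s/n)^{j-1})$; (ii) show that $\{n\tau_j\}_{j=2}^n$, viewed as a point process on $[0,\infty)$, converges to a Poisson point process of rate $1$ — this is a standard "extreme-value / Poisson convergence" computation: the intensity measure of $\{n\tau_j : j\}$ on $[0,s]$ converges, since $\sum_{j=2}^n \mathbb{P}(n\tau_j \leq s) = \sum_{j=2}^n \big(1 - \frac{1}{1 + s/n + \cdots}\big) \approx \sum_{j=2}^n \frac{(j-1)s/n}{1} \cdot(1+o(1))$... which is again $\sim s n/2$, off by the factor $2$ — so I must be consistently mis-normalizing and the true answer involves $\sum_{j}\frac{s(j-1)}{n} = \frac{s(n-1)}{2}$, meaning the correct scaling giving rate-$1$ PPP is with $n$ replaced by... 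I will pin this down in the final write-up by computing $\sum_{j=2}^n \mathbb{P}(n\tau_j \le s)$ exactly and choosing the normalization accordingly (the theorem statement fixes it as $nT_k$, so the constant $1$ in $\mathrm{PP}_k(1)$ and $\mathrm{Gamma}(k,1)$ must be consistent with whatever $\sum_j \mathbb{P}(\Inv_j(\M_{s/n})\ge 1)\to s$ forces — most likely the intended computation gives $\sum_j t^1\cdot\frac{1}{1+\cdots}$ behaving like $\sum_j t = (n-1)t \to s$ when $t = s/(n-1)$, and $n$ vs $n-1$ is absorbed); then (iii) invoke a standard Poisson limit theorem (e.g. via factorial moments, or Chen–Stein, or directly showing the void probabilities $\prod_j \mathbb{P}(n\tau_j \notin [a,b]) \to e^{-(b-a)}$ together with asymptotic independence of disjoint intervals) to conclude $\sum_j \delta_{n\tau_j} \to \mathrm{PPP}(1)$; (iv) transfer this to $(nT_1,\ldots,nT_k)$ by noting the first $k$ order statistics of $\{n\tau_j\}$ coincide with $(nT_1,\ldots,nT_k)$ on an event of probability $\to 1$ (no early double-jumps, all early jumps from distinct coordinates), which gives $(nT_1,\ldots,nT_k)\overset{d}{\to}\mathrm{PP}_k(1)$.

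\textbf{Main obstacle.} The genuinely delicate point is step (iv) in the regular case: controlling that the early \emph{total} jumps of $\Inv(\M_t)$ are, with probability tending to $1$, exactly the first jumps of $k$ \emph{distinct} coordinates — i.e. that neither does some coordinate $j$ jump twice before time $\approx k/n$ (a birth process on $\{0,\ldots,j-1\}$ making its second jump early) nor do two coordinates jump at the same instant. The second-jump estimate requires bounding, uniformly in $j$, the probability that $\Inv_j(\M_{s/n}) \geq 2$; from the marginal this is $\lesssim (s/n)^2 \cdot j^2 / (\text{normalizer})$ summed over $j$, which should be $O(1/n)$, but making this rigorous needs care with the time-inhomogeneous birth-process dynamics (not just the marginal, since $T_k$ depends on the whole path). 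The Poisson-convergence bookkeeping in steps (i)–(iii) is routine once the normalization is fixed, and the marginal-$\mathrm{Gamma}$ claim via $\Inv(\M_{s/n})\to\mathrm{Poisson}(s)$ is essentially a generating-function computation.
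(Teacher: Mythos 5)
Your plan is sound in outline, but it is organized around a recurring arithmetic error that you flag twice and never actually resolve --- and the ``careful expansion'' you promise as a fix repeats the same error. The expansion $\sum_{\ell=0}^{k-1}(s/n)^\ell=1+\frac{k-1}{n}s+O(n^{-2})$ is wrong: the geometric sum equals $1+\frac{s}{n}+O(n^{-2})$ \emph{uniformly in} $k\ge2$, since the terms with $\ell\ge2$ contribute at most $(s/n)^2/(1-s/n)$. Hence
\begin{align*}
    Z_{n,s/n}=\prod_{k=1}^n\sum_{\ell=0}^{k-1}\left(\frac{s}{n}\right)^\ell=\Bigl(1+\tfrac{s}{n}+O(n^{-2})\Bigr)^{n-1}\longrightarrow e^{s}\,,
\end{align*}
with no factor of $2$ anywhere; equivalently, as the paper does it, $Z_{n,s/n}=(1-s/n)^{-n}\prod_{k=1}^n(1-(s/n)^k)\to e^s\cdot1$. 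The same slip reappears in your point-process intensity: $\mathbb{P}(\Inv_j(\M_{s/n})\ge1)=1-\frac{1-s/n}{1-(s/n)^j}=\frac{s}{n}+O(n^{-2})$ for $j\ge2$ (not $(j-1)s/n$), so $\sum_{j=2}^n\mathbb{P}(\Inv_j(\M_{s/n})\ge1)\to s$ and the rate-$1$ normalization in the statement is exactly right. Until these limits are computed correctly, none of your steps closes. You also leave the Mahonian asymptotics $c_{n,m}/n^m\to1/m!$ as a gesture; the paper obtains them from the sandwich $\binom{n-1}{m}\le c_{n,m}\le\binom{n+m}{m}$ via the bijection $\Phi$, and some such argument is needed before you can conclude $\Inv(\M_{s/n})\overset{d}{\to}\mathrm{Poisson}(s)$.

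With the arithmetic repaired, your first part is essentially the paper's proof (the paper evaluates $\mathbb{P}(\Inv(\M_{t/n})\le k-1)$ directly rather than passing through Poisson--Gamma duality, but the computation is identical). Your second part is a genuinely different route: you superpose the first-jump times $\tau_j$ of the independent coordinate processes and identify $(nT_1,\ldots,nT_k)$ with the first $k$ order statistics of $\{n\tau_j\}$ on a high-probability event, whereas the paper shows that, conditionally on the entire past and on the current state having at most $k$ inversions, the probability of no jump during $[t/n,(t+s)/n]$ tends to $e^{-s}$ uniformly, yielding asymptotically i.i.d.\ exponential inter-jump times without ever needing to discuss double jumps. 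Your declared ``main obstacle'' is in fact less delicate than you fear: because each $(\Inv_j(\M_t))_{t}$ is increasing with unit jumps, the event that some coordinate jumps at least twice before time $s/n$ is exactly $\{\exists j:\Inv_j(\M_{s/n})\ge2\}$ --- a statement about the time-$s/n$ marginal alone, not about the path dynamics --- and a union bound gives $\sum_{j}\mathbb{P}(\Inv_j(\M_{s/n})\ge2)\le n\cdot(s/n)^2/(1-s/n)=O(1/n)$; simultaneous jumps of distinct coordinates are ruled out almost surely by independence and continuity of the jump-time laws, exactly as in the proof of Proposition~\ref{prop:generateMallows}. So your route can be made to work, but it requires the extra order-statistics bookkeeping and buys nothing over the paper's conditional argument.
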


The proof of Theorem~\ref{thm:PoissonTk} can be found in Section~\ref{subsec:convergencePoisson}, and boils down to first using the relation between number of jumps and inversion number in a monotone process, then showing that the first $k$ jumps of a regular Mallows process occur for distinct inversion numbers, making them independent of each other. It is worth noting that Theorem~\ref{thm:PoissonTk} does not assume that $\Mt$ is a Markov process.

The rest of this work is organized as follows. In Section~\ref{sec:constructingMallows} we give useful tools to construct regular Mallows processes and, in particular, define and prove the uniqueness of the birth Mallows process, hence proving Theorem~\ref{thm:existenceMt}. Section~\ref{sec:constructingMallows} also defines another interesting Mallows process which is not a Markov process and which we call the \textit{uniform Mallows process}; in particular, this process is defined as a function of only $n$ independent uniform random variables, implying that the amount of information necessary to generate it or known at a certain time can be computed. In Section~\ref{sec:propertiesMonotone} we establish various properties of Mallows processes and prove Theorems~\ref{thm:MallowsGraph} and \ref{thm:PoissonTk}. Section~\ref{sec:propertiesMonotone} also contains a section dedicated to properties of the amount of information related to the uniform Mallows process. Finally, we conclude this work with Section~\ref{sec:conclusion} containing a list of open problems and conjectures related to these newly defined Mallows processes, and in particular further properties we expect the birth Mallows process to hold.

\subsection{Related work}

The Mallows distribution of permutations was first introduced by C.L.~Mallows~\cite{mallows1957non} in the context of ranking theory. The study of its structural properties has gained a lot of interests in the last years and now contains literature related to various topics. Among the properties of these models that have attracted the most attention, it is worth mentioning the length of longest increasing subsequences~\cite{basu2017limit,bhatnagar2015lengths,jin2019limit,mueller2013length} as well as the cycle and subsequence structure~\cite{crane2018probability,gladkich2018cycle,he2020central,mukherjee2016fixed,pinsky2019permutations}, substantially studied over the last decade. Other works studying exchangeability~\cite{gnedin2012two,gnedin2010q}, random matchings~\cite{angel2018mallows}, binary search trees~\cite{addario2021height}, and colourings~\cite{holroyd2020mallows} have led to interesting insights on properties of these random permutations. Finally, Mallows permutations have found themselves as a useful model for various applications, such as convergence to stationarity~\cite{benjamini2005mixing,diaconis2004analysis}, statistical physics~\cite{starr2009thermodynamic,starr2018phase}, or even statistical learning~\cite{tang2019mallows} and genomics~\cite{fang2021arcsine}.

\section{Constructing regular Mallows process}\label{sec:constructingMallows}

In this section, we provide a natural framework for constructing regular Mallows processes and prove Theorem~\ref{thm:existenceMt}.

\subsection{An important bijection}\label{subsec:bijection}

Consider first the set $\mathcal{E}_n$ defined by
\begin{align*}
    \mathcal{E}_n=\Big\{(I_1,\ldots,I_n):\forall j\in[n], 0\leq I_j\leq j-1\Big\}\,,
\end{align*}
and let
\begin{align*}
    \Phi:\mathcal{E}_n&\longrightarrow\Sn\\
    I&\longmapsto\sigma\,,
\end{align*}
be the unique bijection between $\mathcal{E}_n$ and $\Sn$ such that $I_j=\Inv_j(\sigma)$. A simple way to generate $\Phi$ is by defining $[n]\setminus\{\sigma(n),\ldots,\sigma(k+1)\}=\{x_1<\ldots<x_k\}$ and setting $\sigma(k)=x_{k-I_k}$ (see~\cite{weselcouch2016patterns} for further details). This bijection shows to be an efficient way to easily generate Mallows permutations and Mallows processes, as shown in the next two propositions.

\begin{prop}\label{prop:ItoMallows}
    Fix $n\in\mathbb{N}$ and $q\in[0,\infty)$. Let $I=(I_1,\ldots,I_n)\in\mathcal{E}_n$ be a random sequence such that the different entries of $I$ are independent and, for all $j\in[n]$, for all $0\leq k\leq j-1$, we have
    \begin{align*}
        \mathbb{P}(I_j=k)=\frac{q^k}{\sum_{\ell=0}^{j-1}q^\ell}\,.
    \end{align*}
    Then $\Phi(I)$ is $\pi_{n,q}$-distributed.
\end{prop}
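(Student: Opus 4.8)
The plan is to compute the distribution of $\sigma=\Phi(I)$ directly from the definition of the bijection $\Phi$ and verify that it matches $\pi_{n,q}$. Since $\Phi$ is a bijection with the defining property $\Inv_j(\Phi(I)) = I_j$ for all $j\in[n]$, the event $\{\Phi(I)=\sigma\}$ coincides with the event $\{I_j = \Inv_j(\sigma)\text{ for all }j\in[n]\}$. Using the independence of the entries of $I$, we get
\begin{align*}
    \mathbb{P}(\Phi(I)=\sigma) = \prod_{j=1}^n \mathbb{P}\big(I_j = \Inv_j(\sigma)\big) = \prod_{j=1}^n \frac{q^{\Inv_j(\sigma)}}{\sum_{\ell=0}^{j-1}q^\ell}\,.
\end{align*}

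Next I would simplify this product. Using the identity $\Inv(\sigma) = \sum_{j=1}^n \Inv_j(\sigma)$ from the introduction, the numerators multiply to $q^{\Inv(\sigma)}$, while the denominators multiply to $\prod_{j=1}^n\big(\sum_{\ell=0}^{j-1}q^\ell\big) = Z_{n,q}$, exactly the normalizing constant in the definition of the Mallows measure. Hence $\mathbb{P}(\Phi(I)=\sigma) = q^{\Inv(\sigma)}/Z_{n,q} = \pi_{n,q}(\sigma)$ for every $\sigma\in\Sn$, which is the claim.

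The only point that requires care — and the closest thing to an obstacle — is justifying that $\Phi$ is genuinely a well-defined bijection satisfying $I_j = \Inv_j(\Phi(I))$, so that the set-equality of events above is valid; but this is exactly the content recalled right before the proposition (with the explicit construction $\sigma(k) = x_{k-I_k}$ on the complement $[n]\setminus\{\sigma(n),\ldots,\sigma(k+1)\} = \{x_1<\cdots<x_k\}$, and the reference to~\cite{weselcouch2016patterns}), so I would simply cite it. One can optionally note for completeness that the map is well-defined because $0\le I_k\le k-1$ guarantees $k-I_k\in[k]$ so the index $x_{k-I_k}$ makes sense, and that it is a bijection because $|\mathcal{E}_n| = n! = |\Sn|$ together with injectivity (the values $\Inv_j(\sigma)$ determine $\sigma$ by reversing the construction). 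Everything else is the short computation above.
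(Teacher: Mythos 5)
Your proposal is correct and follows essentially the same route as the paper's proof: identify the event $\{\Phi(I)=\sigma\}$ with $\{I_j=\Inv_j(\sigma)\ \forall j\}$, factor the probability by independence, and match the product against the definition of $\pi_{n,q}$ using $\Inv(\sigma)=\sum_j\Inv_j(\sigma)$ and $Z_{n,q}=\prod_{j=1}^n\bigl(\sum_{\ell=0}^{j-1}q^\ell\bigr)$. The additional remarks on the well-definedness and bijectivity of $\Phi$ are a harmless (and reasonable) elaboration of what the paper simply takes as recalled.
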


\begin{proof}
    Using the definition of $\Phi$, for any $\sigma\in\Sn$, we have
    \begin{align*}
        \mathbb{P}\Big(\Phi(I)=\sigma\Big)&=\mathbb{P}\Big(\forall j\in[n]:I_j=\Phi^{-1}(\sigma)_j\Big)=\prod_{j=1}^n\frac{q^{\Inv_j(\sigma)}}{\sum_{\ell=0}^{j-1}q^\ell}\,,
    \end{align*}
    where the second equality uses that $\Phi^{-1}(\sigma)=(\Inv_1(\sigma),\ldots,\Inv_n(\sigma))$ along with the fact that the entries of $I$ are independent. The proposition now follows from the definition of $\pi_{n,q}$.
\end{proof}

With this proposition, it becomes easy to generate regular Mallows processes, as stated in the following proposition.

\begin{prop}\label{prop:generateMallows}
    Fix $n\in\mathbb{N}$. Let $(I(t))_{t\in[0,\infty)}=(I_1(t),\ldots,I_n(t))_{t\in[0,\infty)}$ be a c\`adl\`ag process taking values in $\mathcal{E}_n$ such that the coordinate processes $\{(I_j(t))_{t\in[0,\infty)},j\in[n]\}$ are independent and, for all $j\in[n]$ and $0\leq k\leq j-1$, we have
    \begin{align*}
        \mathbb{P}\big(I_j(t)=k\big)=\frac{t^k}{\sum_{\ell=0}^{j-1}t^\ell}\,.
    \end{align*}
    Then $(\Phi(I(t)))_{t\in[0,\infty)}$ is a Mallows process with independent inversions. Moreover, if the processes $(I_j(t))_{t\in[0,\infty)}$ are increasing, then $(\Phi(I(t)))_{t\in[0,\infty)}$ is a strongly monotone Mallows process. Finally, if additionally the increments of the processes $(I_j(t))_{t\in[0,\infty)}$ are of size $1$, then $(\Phi(I(t)))_{t\in[0,\infty)}$ is a regular Mallows process.
\end{prop}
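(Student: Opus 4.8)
The plan is to take $\M_t:=\Phi(I(t))$ and verify the three asserted properties in turn, with Proposition~\ref{prop:ItoMallows} doing most of the work. Since $\Phi$ is a map on the finite, hence discrete, set $\mathcal{E}_n$, it is automatically continuous, so composing it with the c\`adl\`ag path $t\mapsto I(t)$ yields a c\`adl\`ag path $t\mapsto\M_t$ (the left limit $I(t-)$ is a limit of a sequence in a finite set, hence attained eventually and lying in $\mathcal{E}_n$, and $\Phi(I(t-))$ is then the left limit of $\M$ at $t$). For each fixed $t$, the hypotheses placed on $I(t)$ are exactly those of Proposition~\ref{prop:ItoMallows} with $q=t$, so $\M_t\sim\pi_{n,t}$ and $\Mt$ is a Mallows process. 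Because $\Phi$ is characterized by $\Inv_j(\Phi(I))=I_j$, we have $\Inv_j(\M_t)=I_j(t)$ as processes; these are independent over $j\in[n]$ by hypothesis, so $\Mt$ has independent inversions. If in addition each $(I_j(t))_t$ is increasing, then each $(\Inv_j(\M_t))_t=(I_j(t))_t$ is increasing, and summing over $j$ shows $(\Inv(\M_t))_t$ is increasing; hence $\Mt$ is strongly monotone (and a fortiori monotone).

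It remains to show that, under the extra assumption that the jumps of each $I_j$ have size $1$, the process $\Mt$ is smooth, i.e. that almost surely $\Inv(\M_t)\le\Inv(\M_{t-})+1$ for all $t$. Writing $\Inv(\M_t)-\Inv(\M_{t-})=\sum_{j=1}^n\big(I_j(t)-I_j(t-)\big)$ with each summand in $\{0,1\}$, this increment exceeds $1$ only if two distinct coordinate processes jump at the same time, so it suffices to prove that almost surely no two of the $(I_j(t))_t$ ever jump simultaneously. For $j\in[n]$ and $1\le k\le j-1$ set $\tau^j_k:=\inf\{t:I_j(t)\ge k\}$, the time of the $k$-th jump of $I_j$. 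Monotonicity and right-continuity of $I_j$ give $\{\tau^j_k\le t\}=\{I_j(t)\ge k\}$, whence $\mathbb{P}(\tau^j_k\le t)=\sum_{m=k}^{j-1}t^m\big/\sum_{\ell=0}^{j-1}t^\ell$, a continuous function of $t$; so $\tau^j_k$ has no atoms. Since the coordinate processes are independent, $\mathbb{P}(\tau^j_k=\tau^{j'}_{k'})=0$ whenever $j\ne j'$ (integrate the atomless law of one jump time against the law of the other), and there are only finitely many such pairs; hence almost surely all these jump times are pairwise distinct across coordinates. On that event every jump of $(\Inv(\M_t))_t$ has size at most $1$, so $\Mt$ is smooth, and together with the independent-inversions property already established this makes $\Mt$ a regular Mallows process.

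The only genuinely nontrivial point is this last one — ruling out simultaneous jumps of distinct coordinates — and the crux there is that each jump time $\tau^j_k$ has a diffuse law, which follows from the continuity in $t$ of the marginal probabilities $\mathbb{P}(I_j(t)=k)$; combined with independence of the coordinates, this forces the jump times belonging to different $j$ to avoid each other almost surely. Everything else is immediate from Proposition~\ref{prop:ItoMallows}, the identity $\Inv_j(\Phi(I))=I_j$, and the additivity $\Inv=\sum_j\Inv_j$.
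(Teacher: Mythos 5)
Your proposal is correct and follows essentially the same route as the paper: the first two claims are reduced to Proposition~\ref{prop:ItoMallows} and the identity $\Inv_j(\Phi(I))=I_j$, and smoothness is obtained by introducing the jump times $T^j_k=\inf\{t:I_j(t)\geq k\}$, observing that $\mathbb{P}(T^j_k\leq t)=\mathbb{P}(I_j(t)\geq k)$ is continuous in $t$, and using independence of the coordinate processes to conclude that jump times from distinct coordinates are almost surely distinct. Nothing further is needed.
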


\begin{proof}
    Write $\Mt=(\Phi(I(t)))_{t\in[0,\infty)}$. The fact that $\Mt$ is a Mallows process simply follows from applying Proposition~\ref{prop:ItoMallows}. Similarly, if the processes $(I_j(t))_{t\in[0,\infty)}$ are increasing, then, by the definition of $\Phi$, $\Mt$ is strongly monotone.
    
    Assume now that the processes $(I_j(t))_{t\in[0,\infty)}$ are increasing with increments of size $1$. To prove that $\Mt$ is regular, it suffices to show that $(\Inv(\M_t))_{t\in[0,\infty)}$ has increments of size $1$.
    
    Let $T^j_k$ by the $k$-th jumping time of $I_j$, that is
    \begin{align*}
        T^j_k=\inf\big\{t:[0,\infty):I_j(t)\geq k\big\}\,.
    \end{align*}
    Using our assumption on $(I_j(t))_{t\in[0,\infty)}$, we know that $T^j_1<T^j_2<\ldots<T^j_{j-1}$. Moreover, the definition of $T^j_k$ implies that
    \begin{align*}
        \mathbb{P}\big(T^j_k\leq t\big)=\mathbb{P}\big(I_j(t)\geq k\big)=\frac{1}{\sum_{\ell=0}^{j-1}t^\ell}\sum_{\ell=k}^{j-1}t^\ell\,,
    \end{align*}
    so $T^j_k$ is a continuous random variable. Since the coordinate processes $((I_j(t))_{t\in[0,\infty)},j\in[n])$ are independent, it follows that for $j\neq j'$ almost surely $T^j_k\neq T^{j'}_{k'}$, which implies that all the values $\{T^j_k:j\in[n],k\in[j-1]\}$ are almost surely distinct. Using that
    \begin{align*}
        \Inv(\M_t)=\big|\big\{1\leq k<j\leq n:T^j_k\leq t\big\}\big|\,,
    \end{align*}
    it follows that $(\Inv(\M_t))_{t\in[0,\infty)}$ only has increments of size $1$.
\end{proof}

Proposition~\ref{prop:generateMallows} is a useful tool for building regular Mallows processes; we develop this in the next section.

\subsection{Two regular Mallows processes}\label{subsec:twoMallows}

In this section, we construct two different regular Mallows processes, both based on defining an appropriate process $(I(t))_{t\in[0,\infty)}$ and applying Proposition~\ref{prop:generateMallows}. For the rest of this section, fix $n\in\mathbb{N}$.

\subsubsection{Birth Mallows process}\label{subsec:birthMallows}

The construction of the (unique) birth Mallows process, is based on the time-inhomogeneous birth processes defined in the following lemma.

\begin{lemma}\label{lem:birthModel}
    Fix $j\in\mathbb{N}$. For any $0\leq k\leq j-1$ and $t\in[0,\infty)$, let
    \begin{align*}
        p_j(t,k):=\frac{1}{\sum_{\ell=0}^{j-1}t^\ell}\left[(k+1)\sum_{\ell=0}^{j-2}(\ell+1)t^\ell-j\sum_{\ell=j-k-2}^{j-2}(\ell-j+k+2)t^\ell\right]\,.
    \end{align*}
    Define now the time-inhomogenous birth process $(B_j(t))_{t\in[0,\infty)}$ by $B_j(0)=0$ and infinitesimal rates given by
    \begin{align*}
        \mathbb{P}\Big(B_j(t+h)=k+\ell~\Big|~ B_j(t)=k\Big)=\left\{\begin{array}{ll}
        1-hp_j(t,k)+o(h) & \textrm{if $\ell=0$} \\
        hp_j(t,k)+o(h) & \textrm{if $\ell=1$} \\
        o(h) & \textrm{otherwise}\,,
        \end{array}\right.
    \end{align*}
    where $o(h)/h\rightarrow0$ as $h\rightarrow0$.
    Then for all $0\leq k\leq j-1$ and all $t\in[0,\infty)$, we have
    \begin{align*}
        \mathbb{P}\Big(B_j(t)=k\Big)=\frac{t^k}{\sum_{\ell=0}^{j-1}t^\ell}\,.
    \end{align*}
\end{lemma}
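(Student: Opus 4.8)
The plan is to verify that the functions $P_j(t,k):=\mathbb{P}(B_j(t)=k)$ defined by the stated formula $t^k/\sum_{\ell=0}^{j-1}t^\ell$ satisfy the Kolmogorov forward equations associated to the infinitesimal rates $p_j(t,k)$, together with the initial condition $P_j(0,k)=\mathbbm{1}_{k=0}$; since a time-inhomogeneous pure-birth process on the finite state space $\{0,1,\dots,j-1\}$ with (locally bounded, continuous) rates has a unique such solution, this identifies the marginals. First I would record the initial condition: at $t=0$ the formula gives $0^k/1$, which is $1$ if $k=0$ and $0$ otherwise, as required. Then, from the stated infinitesimal description, the forward equations read
\begin{align*}
    \frac{d}{dt}P_j(t,0)&=-p_j(t,0)P_j(t,0)\,,\\
    \frac{d}{dt}P_j(t,k)&=p_j(t,k-1)P_j(t,k-1)-p_j(t,k)P_j(t,k)\qquad(1\leq k\leq j-1)\,,
\end{align*}
with the convention $p_j(t,j-1)=0$ (no birth is possible out of the top state), which one should check is consistent with the formula for $p_j$ — indeed plugging $k=j-1$ should make the bracket vanish.

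Next I would simply differentiate the proposed $P_j(t,k)=t^k/S(t)$ where $S(t):=\sum_{\ell=0}^{j-1}t^\ell$. Writing $S'(t)=\sum_{\ell=1}^{j-1}\ell t^{\ell-1}=\sum_{\ell=0}^{j-2}(\ell+1)t^{\ell}$, we get
\begin{align*}
    \frac{d}{dt}P_j(t,k)=\frac{k t^{k-1}S(t)-t^k S'(t)}{S(t)^2}\,.
\end{align*}
Substituting into the forward equation and clearing the common factor $1/S(t)$, the identity to be proved becomes a polynomial identity in $t$:
\begin{align*}
    k t^{k-1}S(t)-t^k S'(t) = p_j(t,k-1)\,t^{k-1}S(t) - p_j(t,k)\,t^k S(t)\,,
\end{align*}
after multiplying through appropriately and using the formula for $p_j$, everything reduces to verifying equality of two explicit polynomials in $t$ (of degree at most $2(j-1)$). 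This is the routine but slightly bookkeeping-heavy core of the argument: one expands the bracket defining $p_j(t,k)$, multiplies by the relevant powers of $t$ and by $S(t)$, and checks that the telescoping between the $p_j(t,k-1)$ term and the $p_j(t,k)$ term produces exactly $kt^{k-1}S(t)-t^kS'(t)$. I expect the cleanest way to organize it is to define $Q_j(t,k):=p_j(t,k)\cdot t^k$ (the numerator of the "flux" out of state $k$, up to the $1/S$ factor) and show that $Q_j(t,k-1)-Q_j(t,k)$ equals $k t^{k-1}S(t)-t^k S'(t) = \frac{d}{dt}(t^k) S(t) - t^k S'(t)$; the first sum $\sum_\ell(\ell+1)t^\ell$ inside $p_j$ is built precisely to match $S'(t)$, and the correction sum $\sum_{\ell=j-k-2}^{j-2}(\ell-j+k+2)t^\ell$ is what makes the $k$-dependence telescope correctly and ensures $p_j(t,j-1)=0$.

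The main obstacle is purely computational: getting the index ranges in the two sums defining $p_j$ to line up correctly under the shift $k\mapsto k-1$, and confirming the boundary cases $k=0$ (where the correction sum should be empty or cancel) and $k=j-1$ (where $p_j$ vanishes). To keep this under control I would first treat $j$ and $k$ symbolically via generating-function bookkeeping — encode the relevant sums as derivatives/truncations of $S(t)=\sum_{\ell=0}^{j-1}t^\ell$ — rather than manipulating raw $\sum$ notation, and separately sanity-check small cases such as $j=2$ (where $B_2$ is a single time-inhomogeneous $0\to1$ transition with rate $p_2(t,0)=t/(1+t)\cdot(\text{something})$ forcing $\mathbb{P}(B_2(t)=1)=t/(1+t)$) and $j=3$. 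Once the polynomial identity is verified for all admissible $k$, uniqueness of solutions to the forward system (finite state space, rates continuous hence locally Lipschitz in $t$ and trivially in the finite-dimensional probability vector) gives $\mathbb{P}(B_j(t)=k)=t^k/S(t)$ for all $t$, completing the proof. One should also note in passing that this formula automatically defines a genuine probability distribution (nonnegative entries summing to $1$), and that $p_j(t,k)\geq 0$ on $[0,\infty)$ so that the birth process is well-defined — a point worth a sentence, since the claimed transition rates must be nonnegative for the process to exist.
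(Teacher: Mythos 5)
Your proposal is correct and follows essentially the same route as the paper: both verify that $t^k/\sum_{\ell=0}^{j-1}t^\ell$ satisfies the Kolmogorov forward equations with the given rates and initial condition and then conclude by uniqueness (the paper organizes this as an induction on $k$, solving each linear ODE in turn, while you verify the whole finite system at once, but the content is identical). The ``routine but bookkeeping-heavy'' polynomial identity you defer is in fact the bulk of the paper's proof --- it reduces to showing $p_j(t,k-1)-tp_j(t,k)=\bigl(\sum_{\ell=0}^{j-1}(k-\ell)t^\ell\bigr)/\bigl(\sum_{\ell=0}^{j-1}t^\ell\bigr)$ --- and it does check out, so your outline is sound.
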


\begin{proof}
    First, note that
    \begin{align*}
        p_j(t,j-1)=\frac{1}{\sum_{\ell=0}^{j-1}t^\ell}\left[j\sum_{\ell=0}^{j-2}(\ell+1)t^\ell-j\sum_{\ell=-1}^{j-2}(\ell+1)t^\ell\right]=0\,,
    \end{align*}
    which supports the fact that $B_j(t)\leq j-1$. Now, before proving the result, we verify that $p_j$ is non-negative, a necessary assumption for this birth process to be well-defined. For $0\leq k\leq j-2$ and $t\in[0,\infty)$, we have
    \begin{align*}
        &(k+1)\sum_{\ell=0}^{j-2}(\ell+1)t^\ell-j\sum_{\ell=j-k-2}^{j-2}(\ell-j+k+2)t^\ell\\
        &\hspace{0.5cm}=(k+1)\sum_{\ell=0}^{j-k-3}(\ell+1)t^\ell+\sum_{\ell=j-k-2}^{j-2}\Big[(k+1)(\ell+1)-j(\ell-j+k+2)\Big]t^\ell\,.
    \end{align*}
    The first sum is non-negative, and by re-organizing the terms inside the second sum, we have
    \begin{align*}
        (k+1)(\ell+1)-j(\ell-j+k+2)&=k+1-j(k+2-j)-\ell(j-k-1)\,.
    \end{align*}
    Since $j-k-1\geq1>0$, the previous equation is decreasing in $\ell$. Using that $\ell\leq j-2$, it follows that
    \begin{align*}
        (k+1)(\ell+1)-j(\ell-j+k+2)&\geq(k+1)(j-2+1)-j(j-2-j+k+2)\\
        &=j-k-1>0\,,
    \end{align*}
    proving that $p_j(t,k)\geq0$.
    
    We now want to prove that $B_j(t)$ has the desired distribution. To do so, we prove by induction on $k$ that
    \begin{align*}
        \mathbb{P}\Big(B_j(t)=k\Big)=\frac{t^k}{\sum_{\ell=0}^{j-1}t^\ell}\,.
    \end{align*}
    For $k=0$, using that
    \begin{align*}
        \mathbb{P}\Big(B_j(t+h)=0~\Big|~B_j(t)=0\Big)=1-hp_j(t,0)+o(h)\,,
    \end{align*}
    we have that
    \begin{align*}
        \frac{\partial}{\partial t}\Big(\mathbb{P}\Big(B_j(t)=0\Big)\Big)=-p_j(t,0)\mathbb{P}\Big(B_j(t)=0\Big)\,.
    \end{align*}
    We solve this equation using that $\mathbb{P}(B_j(0)=0)=1$ and that
    \begin{align*}
        p_j(t,0)=\frac{1}{\sum_{\ell=0}^{j-1}t^\ell}\sum_{\ell=0}^{j-2}(\ell+1)t^\ell=\frac{\partial}{\partial t}\left(\log\left(\sum_{\ell=0}^{j-1}t^\ell\right)\right)\,,
    \end{align*}
    to obtain
    \begin{align*}
        \mathbb{P}\Big(B_j(t)=0\Big)=\frac{1}{\sum_{\ell=0}^{j-1}t^\ell}\,.
    \end{align*}
    
    Assume now that for some $0\leq k\leq j-2$, we have
    \begin{align*}
        \mathbb{P}\Big(B_j(t)=k\Big)=\frac{t^k}{\sum_{\ell=0}^{j-1}t^\ell}\,.
    \end{align*}
    Then, the forward equation tells us that
    \begin{align*}
        \mathbb{P}\Big(B_j(t+h)=k+1\Big)&=\sum_{\ell=0}^{k+1}\mathbb{P}\Big(B_j(t+h)=k+1~\Big|~B_j(t)=\ell\Big)\mathbb{P}\Big(B_j(t)=\ell\Big)\\
        &=\Big[1-hp_j(t,k+1)\Big]\mathbb{P}\Big(B_j(t)=k+1\Big)+hp_j(t,k)\frac{t^k}{\sum_{\ell=0}^{j-1}t^\ell}+o(h)\,,
    \end{align*}
    from which it follows that
    \begin{align}
        \frac{\partial}{\partial t}\Big(\mathbb{P}\Big(B_j(t)=k+1\Big)\Big)=-p_j(t,k+1)\mathbb{P}\Big(B_j(t)=k+1\Big)+p_j(t,k)\frac{t^k}{\sum_{\ell=0}^{j-1}t^\ell}\,.\label{eq:Bj}
    \end{align}

    Consider the target function for $\mathbb{P}(B_j(t)=k+1)$:
    \begin{align}
        f(t)=\frac{t^{k+1}}{\sum_{\ell=0}^{j-1}t^\ell}\,.\label{eq:f}
    \end{align}
    Note that
    \begin{align}
        -p_j(t,k+1)f(t)+p_j(t,k)\frac{t^k}{\sum_{\ell=0}^{j-1}t^\ell}&=\frac{t^k}{\sum_{\ell=0}^{j-1}t^\ell}\Big[p_j(t,k)-tp_j(t,k+1)\Big]\,,\label{eq:p and f}
    \end{align}
    from which, using the definition of $p_j$, we obtain
    \begin{align*}
        p_j(t,k)-tp_j(t,k+1)&=\frac{1}{\sum_{\ell=0}^{j-1}t^\ell}\left[(k+1)\sum_{\ell=0}^{j-2}(\ell+1)t^\ell-j\sum_{\ell=j-k-2}^{j-2}(\ell-j+k+2)t^\ell\right]\\
        &\hspace{0.5cm}-\frac{1}{\sum_{\ell=0}^{j-1}t^\ell}\left[(k+2)\sum_{\ell=0}^{j-2}(\ell+1)t^{\ell+1}-j\sum_{\ell=j-k-3}^{j-2}(\ell-j+k+3)t^{\ell+1}\right]\,.
    \end{align*}
    Using that
    \begin{align*}
        (k+1)\sum_{\ell=0}^{j-2}(\ell+1)t^\ell-(k+2)\sum_{\ell=0}^{j-2}(\ell+1)t^{\ell+1}&=-(k+2)(j-1)t^{j-1}+\sum_{\ell=0}^{j-2}(k-\ell+1)t^\ell\,,
    \end{align*}
    and that
    \begin{align*}
        -j\sum_{\ell=j-k-2}^{j-2}(\ell-j+k+2)t^\ell+j\sum_{\ell=j-k-3}^{j-2}(\ell-j+k+3)t^{\ell+1}=j(k+1)t^{j-1}\,,
    \end{align*}
    we obtain
    \begin{align*}
        p_j(t,k)-tp_j(t,k+1)&=\frac{1}{\sum_{\ell=0}^{j-1}t^\ell}\sum_{\ell=0}^{j-1}(k-\ell+1)t^\ell\,.
    \end{align*}

    Plug the previous formula into \eqref{eq:p and f} and use the definition of $f$ from \eqref{eq:f} to obtain that
    \begin{align*}
        -p_j(t,k+1)f(t)+p_j(t,k)\frac{t^k}{\sum_{\ell=0}^{j-1}t^\ell}&=\frac{t^k}{\left(\sum_{\ell=0}^{j-1}t^\ell\right)^2}\left[\sum_{\ell=0}^{j-1}(k+1-\ell)t^\ell\right]=f'(t)\,.
    \end{align*}
    Note now that this last equation is the same as \eqref{eq:Bj} where the probability $\mathbb{P}(B_j(t)=k+1)$ is replaced by $f(t)$. Using that $f(0)=\mathbb{P}(B_j(0)=k+1)=0$, it follows that $\mathbb{P}(B_j(t)=k+1)=f(t)$ which completes the inductive step. This concludes the proof of the lemma.
\end{proof}

Using Lemma~\ref{lem:birthModel}, we can now define the birth Mallows process.

\begin{prop}\label{prop:birthModel}
    Let $(B(t))_{t\in[0,\infty)}=(B_1(t),\ldots,B_n(t))_{t\in[0,\infty)}$, where $(B_j(t))_{t\in[0,\infty)}$ is defined as in Lemma~\ref{lem:birthModel} and let the birth Mallows process be defined by
    \begin{align*}
        \Xt{\M^B}:=(\Phi(B(t)))_{t\in[0,\infty)}\,.
    \end{align*}
    Then $\Xt{\M^B}$ is a regular Mallows process and a c\`adl\`ag Markov process.
\end{prop}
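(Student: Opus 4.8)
The plan is to derive the two conclusions separately. Regularity will follow from Proposition~\ref{prop:generateMallows} applied to the process $(I(t))_{t\in[0,\infty)}:=(B(t))_{t\in[0,\infty)}$, and the Markov property will follow from the observation that $\Xt{\M^B}$ is the image, under the fixed bijection $\Phi$, of a vector of independent time-inhomogeneous Markov chains.

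For the first part I would verify the hypotheses of Proposition~\ref{prop:generateMallows} one by one. \emph{Well-definedness and c\`adl\`ag regularity:} for each fixed $j$, the function $(t,k)\mapsto p_j(t,k)$ is a ratio of polynomials with strictly positive denominator, hence continuous and locally bounded on $[0,\infty)\times\{0,\ldots,j-1\}$, and it is non-negative by the computation in the proof of Lemma~\ref{lem:birthModel}. Therefore the time-inhomogeneous pure-birth chain on the finite state space $\{0,\ldots,j-1\}$ whose rate out of $k$ at time $t$ is $p_j(t,k)$ is well-defined, constructed in the usual way from holding times with $\mathbb{P}(\tau>h\mid\text{in state }k\text{ at time }s)=\exp(-\int_s^{s+h}p_j(r,k)\,dr)$; since the state space is finite and $p_j(t,j-1)=0$, it makes at most $j-1$ jumps and so never explodes, and it admits a c\`adl\`ag version, hence so does the vector $(B(t))_{t\in[0,\infty)}$. \emph{Independence:} the coordinate processes $(B_j(t))_{t\in[0,\infty)}$, $j\in[n]$, are taken mutually independent by construction. \emph{Values in $\mathcal{E}_n$ and increments of size $1$:} because $p_j(t,j-1)=0$ the top state $j-1$ is absorbing, so $0\le B_j(t)\le j-1$ and thus $B(t)\in\mathcal{E}_n$ for all $t$; and the infinitesimal rates of Lemma~\ref{lem:birthModel} make each $B_j$ non-decreasing with jumps of size exactly $1$. \emph{Marginals:} by Lemma~\ref{lem:birthModel}, $\mathbb{P}(B_j(t)=k)=t^k/\sum_{\ell=0}^{j-1}t^\ell$ for all $j$, $k$, $t$. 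Proposition~\ref{prop:generateMallows} then yields that $\Xt{\M^B}=(\Phi(B(t)))_{t\in[0,\infty)}$ is a regular Mallows process, and in particular c\`adl\`ag.

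For the Markov property, each $(B_j(t))_{t\in[0,\infty)}$ is by construction a time-inhomogeneous Markov chain with transition kernels $P^j_{s,t}$. By independence of the coordinates, $(B(t))_{t\in[0,\infty)}$ is a Markov process on $\mathcal{E}_n$ with transition kernel $P_{s,t}=P^1_{s,t}\otimes\cdots\otimes P^n_{s,t}$. Since $\Phi:\mathcal{E}_n\to\Sn$ is a fixed bijection, for every $u\ge0$ the natural filtrations of $(\Phi(B(s)))_{s\le u}$ and of $(B(s))_{s\le u}$ coincide, so for $t>u$ and $A\subseteq\Sn$ the conditional law $\mathbb{P}(\Phi(B(t))\in A\mid(\Phi(B(s)))_{s\le u})$ equals $P_{u,t}(B(u),\Phi^{-1}(A))$, a function of $\Phi(B(u))$ alone. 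Hence $\Xt{\M^B}$ is a c\`adl\`ag Markov process, as required.

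I expect the only genuinely delicate point to be the first item above, namely making rigorous that the infinitesimal description in Lemma~\ref{lem:birthModel} defines an honest process; but this is standard once one notes the rates are continuous, non-negative, and defined on a finite state space, so that no explosion can occur and no minimal-process subtleties arise. The remaining steps are bookkeeping built on Lemma~\ref{lem:birthModel} and Proposition~\ref{prop:generateMallows}.
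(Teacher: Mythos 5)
Your proposal is correct and follows essentially the same route as the paper: regularity via Proposition~\ref{prop:generateMallows} applied to $(B(t))_{t\in[0,\infty)}$, and the Markov property from the fact that $\Phi$ is a fixed bijection applied to a vector of independent time-inhomogeneous Markov (birth) processes. The only difference is that you spell out the well-definedness and non-explosion of the birth chains, which the paper leaves implicit; this is a harmless (indeed welcome) elaboration rather than a different argument.
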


\begin{proof}
    By the definition of $B_1,\ldots,B_n$ from Lemma~\ref{lem:birthModel}, we know that $(B(t))_{t\in[,\infty)}$ is a c\`adl\`ag process of $\mathcal{E}_n$ satisfying all conditions of Proposition~\ref{prop:generateMallows}. This implies that $\Xt{\M^B}$ is a regular Mallows process. To see that it is also a Markov process, recall that $\Phi$ is a bijection, meaning that
    \begin{align*}
        \big\{\M^B_t=\sigma\big\}=\big\{B(t)=\Phi^{-1}(\sigma)\big\}=\big\{(B_1(t),\ldots,B_n(t))=\Phi^{-1}(\sigma)\big\}\,.
    \end{align*}
    Now, since all the processes $(B_1(t))_{t\in[0,\infty)},\ldots,(B_n(t))_{t\in[0,\infty)}$ are time-inhomogeneous birth processes, in particular they are Markov processes. This implies that, for any $t_1<\ldots<t_k<t$ and $\sigma_1,\ldots,\sigma_k,\sigma\in\Sn$, we have
    \begin{align*}
        &\hspace{-0.5cm}\mathbb{P}\Big(\M^B_t=\sigma~\Big|~\M^B_{t_1}=\sigma_1,\ldots,\M^B_{t_k}=\sigma_k\Big)\\
        &=\mathbb{P}\Big(B(t)=\Phi^{-1}(\sigma)~\Big|~B(t_1)=\Phi^{-1}(\sigma_1),\ldots,B(t_k)=\Phi^{-1}(\sigma_k)\Big)\\
        &=\prod_{j=1}^n\mathbb{P}\Big(B_j(t)=\Phi^{-1}(\sigma)_j~\Big|~B_j(t_1)=\Phi^{-1}(\sigma_1)_j,\ldots,B_j(t_k)=\Phi^{-1}(\sigma_k)_j\Big)\,,
    \end{align*}
    where the second inequality follows from independence of the coordinate processes of $(B(t))_{t\in[0,\infty)}$. But then, using that these are Markov processes, we obtain
    \begin{align*}
        \mathbb{P}\Big(\M^B_t=\sigma~\Big|~\M^B_{t_1}=\sigma_1,\ldots,\M^B_{t_k}=\sigma_k\Big)&=\prod_{j=1}^n\mathbb{P}\Big(B_j(t)=\Phi^{-1}(\sigma)_j~\Big|~B_j(t_k)=\Phi^{-1}(\sigma_k)_j\Big)\\
        &=\mathbb{P}\Big(\M^B_t=\sigma~\Big|~\M^B_{t_k}=\sigma_k\Big)\,,
    \end{align*}
    which exactly corresponds to the fact that $\Xt{\M^B}$ is a Markov process.
\end{proof}

In order to conclude the proof of Theorem~\ref{thm:existenceMt}, it only remains to prove that the birth Mallows process is the unique regular Mallows process which is also a Markov process.

\begin{proof}[Proof of Theorem~\ref{thm:existenceMt}]
    Thanks to Proposition~\ref{prop:birthModel}, we know that there exists a c\`adl\`ag Markov process which is a regular Mallows process.
    
    Conversely, using $\Phi$ as defined in Section~\ref{sec:constructingMallows}, regular Mallows processes are fully characterized by the processes $(\Inv_1(\M_t))_{t\in[0,\infty)},(\Inv_2(\M_t))_{t\in[0,\infty)},\ldots,(\Inv_n(\M_t))_{t\in[0,\infty)}$, since regular Mallows processes have the independent inversion property, these processes are independent jump processes. Moreover, using that a regular Mallows process is by definition smooth, we know that these processes will only jump by increments of $1$. This means that each $(\Inv_j(\M_t))_{t\in[0,\infty)}$ for $j\in[n]$ is a time-inhomogeneous birth process, for which properties of time-homogenous birth process apply by adapting the formulas to incorporate the time parameter (see~\cite[Section~6.8]{grimmett2020probability} for the properties of time-homogeneous birth processes, which naturally extend to time-inhomogenous birth processes). In particular, for any $j\in[n]$, the time-inhomogenous birth process $(\Inv_j(\M_t))_{t\in[0,\infty)}$ is fully characterized by its infinitesimal generator
    \begin{align*}
        g_j(t,k)=\lim_{h\downarrow0}\frac{1-\mathbb{P}\big(\Inv_j(\M_{t+h})=k~\big|~\Inv_j(\M_t)=k\big)}{h}\,,
    \end{align*}
    just like in the case of time-homogeneous processes. Furthermore, these incremental generators $g_j$ have to satisfy the forward equation
    \begin{align*}
        \frac{\partial}{\partial t}\Big(\mathbb{P}\Big(\Inv_j(\M_t)=k+1\Big)\Big)=g_j(t,k)\mathbb{P}\Big(\Inv_j(\M_t)=k\Big)-g_j(t,k+1)\mathbb{P}\Big(\Inv_j(\M_t)=k+1\Big)\,.
    \end{align*}
    Since the previous equation is the same as \eqref{eq:Bj} with $p_j$ replaced by $g_j$, it follows that $g_j=p_j$ for all $j\in[n]$. This proves the uniqueness of such process.
\end{proof}

\subsubsection{Uniform Mallows process}\label{subsec:uniformMallows}

We now define a second Mallows process, referred to as the \textit{uniform Mallows process}, written $\Xt{\M^U}$, which is based on using a set of $n$ uniform random variables along with the following lemma.

\begin{lemma}\label{lem:uniformModel}
    Fix $j\in\mathbb{N}$ and $u\in(0,1)$, and define a function $f_j(\,\cdot\,;u):[0,\infty)\mapsto\{0,1,\ldots,j-1\}$ by
    \begin{align*}
        f_j(t;u)=\left\{\begin{array}{ll}
        \left\lfloor\frac{\log(1-u(1-t^j))}{\log t}\right\rfloor & \textrm{if $t\notin\{0,1\}$} \\
        0 & \textrm{if $t=0$} \\
        \lfloor ju\rfloor & \textrm{if $t=1$} \\
    \end{array}\right.\,.
    \end{align*}
    Then $f_j(\,\cdot\,;u)$ is increasing. Moreover, if $U$ is a $\mathrm{Uniform}([0,1])$ random variable, then for all $0\leq k\leq j-1$ and all $t\in[0,\infty)$, we have
    \begin{align*}
        \mathbb{P}\Big(f_j(t;U)=k\Big)=\frac{t^k}{\sum_{\ell=0}^{j-1}t^\ell}\,.
    \end{align*}
\end{lemma}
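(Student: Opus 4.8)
The plan is to verify the two claims about $f_j(\,\cdot\,;u)$ separately, treating the generic case $t\notin\{0,1\}$ first and then handling the boundary values $t=0$ and $t=1$ by continuity/direct inspection.

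For the \emph{monotonicity} claim, fix $u\in(0,1)$. On the interval $t\in(0,1)$ I would set $g(t)=\frac{\log(1-u(1-t^j))}{\log t}$ and show $g$ is non-decreasing in $t$; since $f_j(t;u)=\lfloor g(t)\rfloor$ on this range, monotonicity of $g$ gives monotonicity of $f_j$ there. The cleanest route is to observe that $k\le g(t)$ if and only if $t^k\ge 1-u(1-t^j)$ (being careful that $\log t<0$ for $t\in(0,1)$ reverses the inequality), i.e.\ if and only if $u\ge \frac{1-t^k}{1-t^j}=\frac{\sum_{\ell=0}^{k-1}t^\ell}{\sum_{\ell=0}^{j-1}t^\ell}$ (for $k\ge 1$; for $k=0$ the condition is vacuous). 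For fixed $k$, the quantity $\frac{1-t^k}{1-t^j}$ is decreasing in $t$ on $(0,1)$ — this is an elementary one-variable inequality, which can be checked by writing it as $\frac{\sum_{\ell=0}^{k-1}t^\ell}{\sum_{\ell=0}^{j-1}t^\ell}$ and noting the ratio of these truncated geometric series decreases — so the set $\{t:f_j(t;u)\ge k\}$ is an up-set, i.e.\ an interval of the form $[t_k,1)$ or $(t_k,1)$. This holds for every $k$, which is exactly the statement that $f_j(\,\cdot\,;u)$ is non-decreasing on $(0,1)$. For $t\ge 1$ one checks directly that $1-u(1-t^j)\ge 1$ and $\log t\ge 0$, forcing $g(t)\ge j-1$ once $t$ is large, and more carefully that $f_j(t;u)=j-1$ for all $t>1$ (since $t^{j-1}\ge 1-u(1-t^j)$ always holds there while $t^j<1-u(1-t^j)$ fails), so $f_j$ is constant equal to $j-1$ on $(1,\infty)$. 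Finally one glues the pieces: $f_j(0;u)=0$ is the infimum of the values on $(0,1)$, and one verifies $f_j(t;u)\to j-1$ as $t\uparrow 1$ matches $f_j(1;u)=\lfloor ju\rfloor\le j-1$ and $f_j(t;u)=j-1$ for $t>1$ dominates it; combined with the $(0,1)$ monotonicity this gives monotonicity on all of $[0,\infty)$.

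For the \emph{distributional} claim with $U\sim\mathrm{Uniform}([0,1])$, I would compute $\mathbb{P}(f_j(t;U)\ge k)$ using the equivalence derived above. For $t\in(0,1)$ and $1\le k\le j-1$,
\begin{align*}
\mathbb{P}\bigl(f_j(t;U)\ge k\bigr)=\mathbb{P}\Bigl(U\ge \tfrac{1-t^k}{1-t^j}\Bigr)=1-\tfrac{1-t^k}{1-t^j}=\tfrac{t^k-t^j}{1-t^j}=\frac{\sum_{\ell=k}^{j-1}t^\ell}{\sum_{\ell=0}^{j-1}t^\ell}\,,
\end{align*}
and for $k=0$ this probability is $1$; differencing consecutive values of $k$ gives $\mathbb{P}(f_j(t;U)=k)=\frac{t^k}{\sum_{\ell=0}^{j-1}t^\ell}$, as desired. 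The case $t=0$ is immediate since $f_j(0;U)\equiv 0$ and $\frac{0^k}{\sum_\ell 0^\ell}=\mathbbm{1}[k=0]$. The case $t=1$ uses that $\lfloor jU\rfloor$ is uniform on $\{0,\ldots,j-1\}$, matching $\frac{1^k}{\sum_{\ell=0}^{j-1}1^\ell}=\frac1j$. For $t>1$, $f_j(t;U)\equiv j-1$ while $\frac{t^{j-1}}{\sum_{\ell=0}^{j-1}t^\ell}\to 1$? — here one must be slightly careful: the formula $\frac{t^k}{\sum_\ell t^\ell}$ is not concentrated at $k=j-1$ for $t>1$ in general, so I would re-examine: in fact for $t>1$ the correct reading is that $f_j(t;u)$ is not constant, and I should instead verify $k\le g(t)\iff u\ge\frac{1-t^k}{1-t^j}$ continues to hold (now $\log t>0$ and $1-t^j<0$, so both sides of the manipulation flip signs consistently), giving the same conclusion $\mathbb{P}(f_j(t;U)\ge k)=\frac{1-t^k}{1-t^j}\cdot(-1)/(-1)$-type bookkeeping that again yields $\frac{\sum_{\ell=k}^{j-1}t^\ell}{\sum_{\ell=0}^{j-1}t^\ell}$.

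The main obstacle I anticipate is purely the \textbf{sign bookkeeping} around $\log t$ and $1-t^j$ when $t$ crosses $1$: the inequality $t^k\ge 1-u(1-t^j)$ must be divided by $\log t$, whose sign changes, and $1-t^j$ also changes sign, so one has to confirm that the two sign reversals cancel and that the floor function is being applied to the correct monotone quantity on each side of $1$. Once the clean equivalence $f_j(t;u)\ge k \iff u\ge \frac{1-t^k}{1-t^j}$ (interpreted appropriately, and vacuously true for $k=0$) is established uniformly, both the monotonicity in $t$ (via monotonicity of $t\mapsto\frac{1-t^k}{1-t^j}$) and the marginal law (via the uniform CDF) follow with only routine algebra; the elementary monotonicity of $t\mapsto \frac{1-t^k}{1-t^j}$ is the one genuine calculus lemma needed, and it is standard.
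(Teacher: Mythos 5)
Your core strategy is sound and genuinely different from the paper's. The paper proves monotonicity by differentiating $\phi(t)=\log(1-u(1-t^j))/\log t$ and invoking concavity of $\log$ to show $\phi'\ge 0$, and then separately computes $\mathbb{P}(f_j(t;U)=k)$ by inverting the two-sided inequality $k\le\phi(t)<k+1$ into an interval condition on $U$. You instead establish the single equivalence $f_j(t;u)\ge k\iff u\ge\frac{1-t^k}{1-t^j}=\frac{\sum_{\ell=0}^{k-1}t^\ell}{\sum_{\ell=0}^{j-1}t^\ell}$ and derive \emph{both} claims from it: monotonicity from the fact that the ratio of truncated geometric sums is decreasing in $t$ (which is elementary: writing the numerator of the derivative as $\sum_{\ell<k,\,m<j}(\ell-m)t^{\ell+m-1}$, the terms with $\ell,m<k$ cancel in pairs and the rest are negative), and the law of $f_j(t;U)$ from the uniform CDF. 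This unification is arguably cleaner than the paper's two separate computations, and it handles the sign bookkeeping at $t=1$ once and for all, since the threshold $\frac{1-t^k}{1-t^j}$ is given by the same rational expression on both sides of $1$ and extends continuously to $k/j$ at $t=1$.

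One concrete error must not survive into a final write-up: your claim that $f_j(\,\cdot\,;u)\equiv j-1$ on $(1,\infty)$ is false, and the parenthetical justification (``$t^{j-1}\ge 1-u(1-t^j)$ always holds there'') is wrong — e.g.\ $j=2$, $t=2$, $u=0.9$ gives $t^{j-1}=2<3.7=1+u(t^j-1)$, and indeed $f_2(2;0.1)=0$ since $0.1<\frac{t-1}{t^2-1}=\frac13$. You notice this yourself in the distributional part and correctly pivot to the uniform equivalence, but as written your monotonicity argument on $(1,\infty)$ and the gluing at $t=1$ rest on the false constancy claim. The fix is exactly what your last paragraph describes: prove the equivalence $f_j(t;u)\ge k\iff u\ge\frac{1-t^k}{1-t^j}$ uniformly for all $t\in(0,\infty)\setminus\{1\}$ (the two sign flips from $\log t$ and $1-t^j$ cancel), check it is consistent with the conventions at $t=0$ and $t=1$, and let that single statement carry both the monotonicity and the distribution. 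With that repair the proof is complete and correct.
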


\begin{proof}
    First of all, consider $\phi(t)=\frac{\log(1-u(1-t^j))}{\log t}$. Then we have
    \begin{align*}
        \phi'(t)&=\frac{1}{(\log t)^2}\left[\frac{jut^{j-1}}{1-u(1-t^j)}\log t-\frac{1}{t}\log\Big(1-u(1-t^j)\Big)\right]\\
        &=\frac{1}{t(\log t)^2}\left[\frac{ut^j}{1-u(1-t^j)}\log(t^j)-\log\Big(1-u(1-t^j)\Big)\right]\,.
    \end{align*}
    Now, by the concavity of $\log$, we know that
    \begin{align*}
        \lambda\log\frac{1}{x}+(1-\lambda)\log\frac{1}{y}\geq\log\left(\frac{1}{\lambda x+(1-\lambda)y}\right)\,.
    \end{align*}
    Apply this formula with $\lambda=\frac{ut^j}{1-u(1-t^j)}$, $x=\frac{1}{t^j}$ and $y=1$ to obtain
    \begin{align*}
        \frac{ut^j}{1-u(1-t^j)}\log(t^j)\geq\log\left(\frac{1}{\frac{u}{1-u(1-t^j)}+\frac{1-u}{1-u(1-t^j)}}\right)=\log\Big(1-u(1-t^j)\Big)\,,
    \end{align*}
    which proves that $\phi'(t)\geq0$. This proves that $f_j(\,\cdot\,;u)$ is increasing on $(0,1)$ and on $(1,\infty)$. To conclude the proof of the first claim of the lemma, note that $f_j(\,\cdot\,;u)$ is also continuous at $0$ and $1$.
    
    For the second part of the lemma, first note that the statement holds for $t=0$ and $t=1$. Then, for $t>0$ with $t\neq1$, and for all $0\leq k\leq j-1$, we have
    \begin{align*}
        \mathbb{P}\Big(f_j(t;U)=k\Big)&=\mathbb{P}\left(k\leq \frac{\log(1-U(1-t^j))}{\log t}<k+1\right)\\
        &=\mathbb{P}\left(\frac{t^k-1}{t^j-1}\leq U<\frac{t^{k+1}-1}{t^j-1}\right)\\
        &=\frac{t^k(t-1)}{t^j-1}\,,
    \end{align*}
    which is the desired formula. This concludes the proof of the proposition.
\end{proof}

Using this lemma, we now define the uniform Mallows process.

\begin{deft}\label{def:uniformModel}
    Let $U_1,\ldots,U_n$ be a set of $n$ independent $\mathrm{Uniform}([0,1])$ random variables and let $I(t)=(f_1(t;U_1),\ldots,f_n(t;U_n))$, where $f_1,\ldots,f_n$ are as in Lemma~\ref{lem:uniformModel}. Then the uniform Mallows process is defined by $\Xt{\M^U}=(\Phi(I(t)))_{t\in[0,\infty)}$.
\end{deft}

Thanks to Lemma~\ref{lem:uniformModel} and Proposition~\ref{prop:generateMallows}, the process $\Xt{\M^U}$ is a regular Mallows process. However, this process is not a Markov process. Its properties will be further explored in Section~\ref{subsec:propertiesUniform}.

\section{Properties of regular Mallows processes}\label{sec:propertiesMonotone}

In this section, we prove Theorem~\ref{thm:MallowsGraph} and \ref{thm:PoissonTk}, and study further properties of the uniform Mallows process from Definition~\ref{def:uniformModel}.

\subsection{Transition graph}\label{subsec:MallowsGraph}

We start by proving our results regarding the graph structure of smooth and regular Mallows processes.

\begin{proof}[Proof of Theorem~\ref{thm:MallowsGraph}]
    The first part of the proof addresses the case where $\M$ is smooth; the second part proves the assertions of the theorem for regular $\M$. Recall from Section~\ref{subsec:bijection} that for any $I\in\mathcal{E}_n$ and for $\sigma=\Phi(I)$, the values $\sigma(n),\ldots,\sigma(1)$ can be recursively constructed by letting
    \begin{align}
        [n]\setminus\{\sigma(n),\ldots,\sigma(j+1)\}=\{x_1<\ldots<x_j\}\label{eq:recursive Phi}
    \end{align}
    and by setting $\sigma(j)=x_{j-I_j}$.
    
    For the first part of this proof, assume that $\M=\Mt$ is a smooth Mallows process. Note that the definition of the expanded hypercube directly implies that $\G_\M$ is a subgraph of $\mathcal{H}$, since every jump of $\Mt$ corresponds to an increment of exactly one of its inversion numbers by $1$, and by definition every such increment corresponds to an edge of $\mathcal{H}_n$.
    
    We now prove that $\langle\G_\M\rangle\subset\T$. Since $\G_\M$ is a subgraph of $\mathcal{H}_n$, it suffices to prove that the generator of $\mathcal{H}_n$ is a subset of $\T$. Using the definition of $\mathcal{H}_n$, the previous statement can be equivalently re-stated as follows. For any $I=(I_1,\ldots,I_n)\in\mathcal{E}_n$ and any $\ell\in[n]$ such that $I_\ell<\ell-1$, let $I'=(I'_1,\ldots,I'_n)\in\mathcal{E}_n$ be defined by $I'_j=I_j$ for $j\neq\ell$, and $I'_\ell=I_\ell+1$. Then $\Phi(I')\cdot(\Phi(I))^{-1}$ is a transposition. We now prove that this second and equivalent statement holds.
    
    Let $I$ and $I'$ be defined as above and let $\sigma=\Phi(I)$ and $\sigma'=\Phi(I')$. Let us try to understand the relation between $\sigma$ and $\sigma'$. Using the characterization of $\Phi$ we gave at the beginning of the proof, it is easy to verify that $\sigma(j)=\sigma'(j)$ for all $j>\ell$, since $I_j=I'_j$. Now, at step $\ell$, we have
    \begin{align*}
        [n]\setminus\{\sigma(\ell+1),\ldots,\sigma(n)\}=[n]\setminus\{\sigma'(\ell+1),\ldots,\sigma'(n)\}=\{x_1<\ldots<x_\ell\}\,.
    \end{align*}
    By definition, we then have $\sigma(\ell)=x_{\ell-I_\ell}$ and $\sigma'(\ell)=x_{\ell-I'_\ell}=x_{\ell-I_\ell-1}$. From this, it follows that
    \begin{align*}
        [n]\setminus\{\sigma(\ell),\ldots,\sigma(n)\}&=\Big([n]\setminus\{\sigma(\ell+1),\ldots,\sigma(n)\}\Big)\setminus\{x_{\ell-I_\ell}\}\\
        &=\{x_1<\ldots<x_{\ell-I_\ell-2}<x_{\ell-I_\ell-1}<x_{\ell-I_\ell+1}<\ldots<x_\ell\}
    \end{align*}
    and
    \begin{align*}
        [n]\setminus\{\sigma'(\ell),\ldots,\sigma'(n)\}&=\Big([n]\setminus\{\sigma(\ell+1),\ldots,\sigma(n)\}\Big)\setminus\{x_{\ell-I_\ell-1}\}\\
        &=\{x_1<\ldots<x_{\ell-I_\ell-2}<x_{\ell-I_\ell}<x_{\ell-I_\ell+1}<\ldots<x_\ell\}\,.
    \end{align*}
    Since $I_{\ell-1}=I'_{\ell-1}$ two cases follow. Either $I_{\ell-1}=I'_{\ell-1}=I_\ell$ and then $\sigma(\ell-1)=x_{\ell-I_\ell-1}$ and $\sigma'(\ell-1)=x_{\ell-I_\ell}$, meaning that $\sigma(\ell-1)=\sigma'(\ell)$ and $\sigma'(\ell-1)=\sigma(\ell)$; or $I_{\ell-1}=I'_{\ell-1}\neq I_\ell$, and then $\sigma(\ell-1)=\sigma'(\ell-1)$. In this second case, we have that
    \begin{align*}
        [n]\setminus\{\sigma(\ell-1),\ldots,\sigma(n)\}&=\{x_1<\ldots<x_{\ell-I_\ell-1}<x_{\ell-I_\ell+1}<\ldots<x_\ell\}\setminus\{\sigma(\ell-1)\}
    \end{align*}
    and that
    \begin{align*}
        [n]\setminus\{\sigma'(\ell-1),\ldots,\sigma'(n)\}&=\{x_1<\ldots<x_{\ell-I_\ell-2}<x_{\ell-I_\ell}<\ldots<x_\ell\}\setminus\{\sigma(\ell-1)\}
    \end{align*}
    By applying the same argument as before, depending on the value of $I_{\ell-2}=I'_{\ell-2}$, we obtain that either $\sigma(\ell-2)=x_{\ell-I_\ell-1}=\sigma'(\ell)$ and $\sigma'(\ell-2)=x_{\ell-I_\ell}=\sigma(\ell)$, or $\sigma(\ell-2)=\sigma'(\ell-2)\notin\{x_{\ell-I_\ell},x_{\ell-I_\ell-1}\}$. By repeating this argument, it follows that there exists $i\in[\ell-1]$ such that $\sigma(\ell-i)=\sigma'(\ell)$, $\sigma'(\ell-i)=\sigma(\ell)$, and for all $\ell-i<j<\ell$, $\sigma(j)=\sigma'(j)$. Note that we necessarily have that $i\leq\ell-1$ since, in the case where $i>\ell-2$, then $\sigma(j)=\sigma'(j)$ for all $\ell-i<j<\ell$. Now, using that $\ell-i\leq 1$, it follows that $\sigma(j)=\sigma'(j)$ for all $2\leq j<\ell$; but we also know that this equality holds for all $j>\ell$ and then, since $\sigma$ and $\sigma'$ are permutation and since $\sigma(\ell)\neq\sigma'(\ell)$, it follows that $\sigma(1)=\sigma'(\ell)$ and $\sigma(\ell)=\sigma'(1)$, hence $i=\ell-1$.
    Write now $k=\ell-i$ and note that
    \begin{align*}
        [n]\setminus\{\sigma(k),\ldots,\sigma(n)\}=[n]\setminus\{\sigma'(k),\ldots,\sigma'(n)\}\,,
    \end{align*}
    and that, for all $j<k$, we have $I_j=I'_j$. These two statement along with the description of $\Phi$ given in \eqref{eq:recursive Phi} imply that $\sigma(j)=\sigma'(j)$ for all $j<k$. This shows that, for all $j\notin\{k,\ell\}$, $\sigma(j)=\sigma'(j)$, and that $\sigma(\ell)=\sigma'(k)$ and $\sigma(k)=\sigma'(\ell)$. This is equivalent to saying that $\sigma'$ is obtained from $\sigma$ by transposing $k$ and $\ell$, or in other words, that $\sigma'=\sigma\cdot\transp{k}{\ell}$. This proves that $\langle\mathcal{H}_n\rangle\subset\T$, hence showing that $\langle\G_\M\rangle\subset\T$.
    
    We now prove that $\Ta\subset\langle\G_\M\rangle$. Note first that $\Ta=\{\sigma\in\Sn:\Inv(\sigma)=1\}$. Suppose that $\Ta\setminus\langle\G_\M\rangle\neq\emptyset$ and consider $\tau\in\Ta\setminus\langle\G_\M\rangle$. By the definition of $\langle\G_\M\rangle$ and $\G_\M$, we have
    \begin{align*}
        \mathbb{P}\big(\M_t=\tau\big)=\mathbb{P}\big(\M_t=\tau,\exists u>0:\M_{u-}=\mathrm{id},\M_u=\tau\big)\leq\mathbb{P}\big(\exists u>0:\M_{u-}=\mathrm{id},\M_u=\tau\big)=0\,.
    \end{align*}
    However, the first probability on the left is non-zero whenever $t>0$, creating a contradiction. Hence $\Ta\subset\langle\G_\M\rangle$
    
    We now move on to the second part of the proof and prove that if $\M$ is a regular Mallows process, then $\G_\M=\mathcal{H}_n$ and $\langle\G_\M\rangle=\langle\mathcal{H}_n\rangle=\T$. We start by proving that $\G_\M=\mathcal{H}_n$. Since we already proved that $\G_\M$ is a subgraph of $\mathcal{H}_n$, it suffices to show that $\mathcal{H}_n$ is a subgraph of $\G_\M$.
    
    Consider $(\sigma,\sigma')\in E(\mathcal{H}_n)$ and order $\sigma$ and $\sigma'$ so that $\Inv(\sigma')=\Inv(\sigma)+1$; write $I=(I_1,\ldots,I_n)=\Phi^{-1}(\sigma)$ and $I'=(I'_1,\ldots,I'_n)=\Phi^{-1}(I')$. Using the definition of $\mathcal{H}_n$, we know that there exists $\ell\in[n]$ such that $I'_j=I_j$ for all $j\neq \ell$ and $I'_{\ell}=I_{\ell}+1$. For $j\in[n]$, write $(T^j_k)_{k\in[j-1]}$ for the jumping times of $(\Inv_j(\M_t))_{t\in[0,\infty)}$, so
    \begin{align*}
        T^j_k=\inf\big\{t\in[0,\infty):\Inv_j(\M_t)\geq k\big\}\,.
    \end{align*}
    For convenience, set $T^j_0=0$ and $T^j_k=\infty$ for $k\geq j$. Note that the processes $((T^j_k)_{k\in[j-1]})_{j\in[n]}$ are mutually independent and all have continuous distributions. Consider now the following event
    \begin{align*}
        E=\Big\{\max\big\{T^j_{I_j}:j\in[n]\big\}<T^{\ell}_{I_{\ell+1}}<\min\big\{T^j_{I_j+1}:j\neq \ell\big\}\Big\}\,.
    \end{align*}
    To understand the role of $E$, let us first show that, if $\mathbb{P}(E)>0$, then $(\sigma,\sigma')\in E(\G_\M)$ (we will later prove that $\mathbb{P}(E)>0$). Assume that $E$ holds. Then we can find $t>0$ so that
    \begin{align*}
        \max\big\{T^j_{I_j}:j\in[n]\big\}<t<T^{\ell}_{I_{j_1+1}}<\min\big\{T^j_{I_j+1}:j\neq \ell\big\}\,.
    \end{align*}
    Now, at time $t$, we have that $\Inv_j(\M_t)=I_j$ for all $j\in[n]$ since $T^j_{I_j}<t<T^j_{I_j+1}$, implying that $\M_t=\Phi(I)=\sigma$. Then, at time $T^{\ell}_{I_{\ell}+1}$, the $\ell$-th inversion number of $\Mt$ jumps to $I_j+1$, while no other coordinate jumps, which yields that $\M_{T^{\ell}_{I_{\ell}+1}}=\Phi(I')=\sigma'$. Since there is no jump between $t$ and $T^{\ell}_{I_{\ell}+1}$, this implies that
    \begin{align}
        \mathbb{P}\Big(\exists t\in(0,\infty):\M_{t-}=\sigma,\M_t=\sigma'\Big)\geq\mathbb{P}(E)\,.\label{eq:bound with E}
    \end{align}
    Hence, under the assumption that $\mathbb{P}(E)>0$, it follows that $(\sigma,\sigma')\in E(\G_\M)$.
    
    We now prove that $\mathbb{P}(E)>0$. We first prove by induction that, for any subset $J\subset[n]$, we have
    \begin{align*}
        \mathbb{P}\Big(\max\big\{T^j_{I_j}:j\in J\big\}<\min\big\{T^j_{I_j+1}:j\in J\big\}\Big)>0\,.
    \end{align*}
    This holds if $|J|=1$ since $\M$ is regular, meaning that there are no two jumps at the same time. Now assume that the statement holds for all sets of size $k-1$ and let $J\subset[n]$ be of size $k$. Let $J'$ be any subset of $J$ of size $k-1$ and write $i=J\setminus J'$. By the induction hypothesis, we know that 
    \begin{align*}
        \mathbb{P}\Big(\max\big\{T^j_{I_j}:j\in J'\big\}<\min\big\{T^j_{I_j+1}:j\in J'\big\}\Big)>0\,.
    \end{align*}
    But then, since $T^i_{I_i}$ is independent of $((T^j_k)_{0\leq k\leq j})_{j\in J'}$ and is continuously distributed, we have that
    \begin{align*}
        \mathbb{P}\Big(\max\big\{T^j_{I_j}:j\in J'\big\}<T^i_{I_i}<\min\big\{T^j_{I_j+1}:j\in J'\big\}~\Big|~\big((T^j_k)_{0\leq k\leq j}\big)_{j\in J'}\Big)>0\,.
    \end{align*}
    Since we have that $T^i_{I_i}<T^i_{I_i+1}$ by assumption, it follows that
    \begin{align*}
        \mathbb{P}\Big(\max\big\{T^j_{I_j}:j\in J\big\}<\min\big\{T^j_{I_j+1}:j\in J\big\}\Big)>0\,,
    \end{align*}
    verifying the inductive step. But now, using a similar argument as the one to go from $J'$ to $J$, applied with $J'=[n]\setminus\{\ell\}$ to $I'$, we have that
    \begin{align*}
        \mathbb{P}\Big(\max\big\{T^j_{I'_j}:j\neq \ell\big\}<T^{\ell}_{I'_{\ell}}<\min\big\{T^j_{I'_j+1}:j\neq \ell\big\}~\Big|~\big((T^j_k)_{0\leq k\leq j}\big)_{j\neq \ell}\Big)>0\,.
    \end{align*}
    Since $I'_j=I_j$ for $j\neq\ell$ and $I'_\ell=I_\ell+1$, it follows that
    \begin{align*}
        \mathbb{P}\Big(\max\big\{T^j_{I_j}:j\neq \ell\big\}<T^{\ell}_{I_{\ell}+1}<\min\big\{T^j_{I_j+1}:j\neq \ell\big\}~\Big|~\big((T^j_k)_{0\leq k\leq j}\big)_{j\neq \ell}\Big)>0\,,
    \end{align*}
    which, combined with our previous result applied to $J=[n]\setminus\{\ell\}$
    \begin{align*}
        \mathbb{P}\Big(\max\big\{T^j_{I_j}:j\neq \ell\big\}<\min\big\{T^j_{I_j+1}:j\neq \ell\big\}\Big)>0\,,
    \end{align*}
    leads to $\mathbb{P}(E)>0$. Combining $\mathbb{P}(E)>0$ with \eqref{eq:bound with E} and the fact that $(\sigma,\sigma')\in E(\mathcal{H}_n)$ was arbitrary yields that $\mathcal{H}_n$ is a subgraph of $\G_\M$.
    
    We conclude this proof by proving that $\langle\mathcal{H}_n\rangle=\T$. Since we know that $\langle\G_\M\rangle\subset\T$ and that $\G_\M=\mathcal{H}_n$ which implies that $\langle\G_\M\rangle=\langle\mathcal{H}_n\rangle$, it suffices to show that $\T\subset\langle\mathcal{H}_n\rangle$. Let $\transp{i}{j}\in\T$ be such that $i<j$. We now prove that there exists $\sigma\in\Sn$ such that $(\sigma,\sigma\cdot\transp{i}{j})\in E(\mathcal{H}_n)$.
    
    Fix $I=(I_j)_{j\in[n]}\in\mathcal{E}_n$ such that $I_j=j-i-1<j-1$ and that $I_k=0$ for $k\neq j$. Let $I'=(I'_j)_{j\in[n]}\in\mathcal{E}_n$ be such that $I'_k=I_k=0$ for $k\neq j$ and that $I'_j=I_j+1=j-i\leq j-1$. From the definition of $\mathcal{H}_n$, we know that $(\Phi(I),\Phi(I'))\in E(\mathcal{H}_n)$. It now only remains to prove that $\Phi(I')=\Phi(I)\cdot\transp{i}{j}$. Using the definition of $\Phi$, in particular the constructive definition of $\Phi$ given by \eqref{eq:recursive Phi}, we have that
    \begin{align}
        \Phi(I)(k)&=\left\{\begin{array}{ll}
            k & \textrm{if $k\leq i$ or $k>j$} \\
            i+1 & \textrm{if $k=j$} \\
            k+1 & \textrm{otherwise}
        \end{array}\right.\label{eq:PhiI}\\
        &=(\textcolor{black!50!white}{1,\ldots,i-1,}\boldsymbol{i}\textcolor{black!50!white}{,i+2,i+3,\ldots,j-1,j,}\boldsymbol{i+1}\textcolor{black!50!white}{,j+1,\ldots,n})\notag
    \end{align}
    and that
    \begin{align}
        \Phi(I')(k)&=\left\{\begin{array}{ll}
            k & \textrm{if $k<i$ or $k>j$} \\
            i & \textrm{if $k=j$} \\
            k+1 & \textrm{otherwise}\,.
        \end{array}\right.\label{eq:PhiI'}\\
        &=(\textcolor{black!50!white}{1,\ldots,i-1,}\boldsymbol{i+1}\textcolor{black!50!white}{,i+2,i+3,\ldots,j-1,j,}\boldsymbol{i}\textcolor{black!50!white}{,j+1,\ldots,n})\,.\notag
    \end{align}
    Indeed, the only cause of non-zero inversion number of $\Phi(I)$ comes from the value $i+1$ being found at position $j$, creating $j-i-1$ inversions, and the only cause of non-zero inversion number of $\Phi(I')$ comes from the value $i$ being found at position $j$, creating $j-i$ inversions. Finally, \eqref{eq:PhiI} and \eqref{eq:PhiI'} imply that $\Phi(I)(k)=\Phi(I')(k)$ for any $k\notin\{i,j\}$ and that $\Phi(I')(i)=\Phi(I)(j)=i+1$ and $\Phi(I')(j)=\Phi(I)(i)=i$; this exactly means that $\Phi(I')=\Phi(I)\cdot\transp{i}{j}$, which concludes our proof.
\end{proof}

\subsection{Jumping times}\label{subsec:convergencePoisson}

In this section we prove Theorem~\ref{thm:PoissonTk}. The proof is divided into two parts: first we derive the distributional limit of a single jumping time, then we show that the full distribution is as claimed.

\begin{proof}[Proof of Theorem~\ref{thm:PoissonTk}]
    Fix $k\in\mathbb{N}$. In the case of a monotone Mallows process $\Mt$, since $\Inv(\M_t)$ is increasing, we have that
    \begin{align*}
        \mathbb{P}\big(T_k>t\big)=\mathbb{P}\big(\Inv(\M_t)\leq k-1\big)\,.
    \end{align*}
    Using the definition of Mallows permutations, it follows that
    \begin{align*}
        \mathbb{P}\big(T_k>t\big)=\frac{1}{\prod_{k=1}^n\left(\sum_{\ell=0}^{k-1}t^\ell\right)}\sum_{\ell=0}^{k-1}\Big|\Big\{\sigma\in\Sn:\Inv(\sigma)=\ell\Big\}\Big|t^\ell\,
    \end{align*}
    which means that
    \begin{align*}
        \mathbb{P}\big(nT_k>t\big)=\frac{1}{\prod_{k=1}^n\left(\sum_{\ell=0}^{k-1}\left(\frac{t}{n}\right)^\ell\right)}\sum_{\ell=0}^{k-1}\frac{\Big|\Big\{\sigma\in\Sn:\Inv(\sigma)=\ell\Big\}\Big|}{n^\ell}t^\ell\,.
    \end{align*}
    
    For the denominator, note that, for $n>t$, we have
    \begin{align*}
        \prod_{k=1}^n\left(\sum_{\ell=0}^{k-1}\left(\frac{t}{n}\right)^\ell\right)=\prod_{k=1}^n\left(\frac{1-\left(\frac{t}{n}\right)^k}{1-\frac{t}{n}}\right)=\left(1-\frac{t}{n}\right)^{-n}\prod_{k=1}^n\left(1-\left(\frac{t}{n}\right)^k\right)\,.
    \end{align*}
    Now, as $n$ tends to infinity, the first term converges to $e^t$:
    \begin{align*}
        \left(1-\frac{t}{n}\right)^{-n}\longrightarrow e^t\,.
    \end{align*}
    For the second term, we have
    \begin{align*}
        \prod_{k=1}^n\left(1-\left(\frac{t}{n}\right)^k\right)&=\exp\left(\sum_{k=1}^n\log\left(1-\left(\frac{t}{n}\right)^k\right)\right)\,.
    \end{align*}
    Since $(t/n)^k$ is decreasing in $k$, we can apply a uniform bound over all the logarithmic terms and obtain
    \begin{align}
        \prod_{k=1}^n\left(1-\left(\frac{t}{n}\right)^k\right)&=\exp\left(O\left(\frac{1}{n}\right)\right)=1+o(1)\,.\label{eq:prod goes to 1}
    \end{align}
    This proves that
    \begin{align*}
        \prod_{k=1}^n\left(\sum_{\ell=0}^{k-1}\left(\frac{t}{n}\right)^\ell\right)\longrightarrow e^t
    \end{align*}
    as $n$ tends to infinity.
    
    Consider now $S^\ell_n=|\{\sigma\in\Sn:\Inv(\sigma)=\ell\}|$. Using the bijection $\Phi$ from Section~\ref{sec:constructingMallows}, we have that
    \begin{align*}
        S^\ell_n=\Big|\Big\{(I_1,\ldots,I_n)\in\mathcal{E}_n:I_1+\ldots+I_n=\ell\Big\}\Big|\,.
    \end{align*}
    By changing the condition that $0\leq I_j\leq j-1$ in the definition of $\mathcal{E}_n$ to $I_j\geq0$ for an upper bound and to $0\leq I_j\leq\min(1,j-1)$ for a lower bound, we obtain
    \begin{align*}
        S^\ell_n\geq\Big|\Big\{(I_1,\ldots,I_n):I_1+\ldots+I_n=\ell\textrm{ and }0\leq I_j\leq\min(1,j-1)\Big\}\Big|=\binom{n-1}{\ell}
    \end{align*}
    and
    \begin{align*}
        S^\ell_n\leq\Big|\Big\{(I_1,\ldots,I_n):I_1+\ldots+I_n=\ell\textrm{ and }I_j\geq0\Big\}\Big|=\binom{n+\ell}{\ell}\,.
    \end{align*}
    It follows that, for any fixed $\ell$ and as $n$ goes to infinity, we have
    \begin{align*}
        \frac{S^\ell_n}{n^\ell}\longrightarrow\frac{1}{\ell!}\,.
    \end{align*}
    Since $k$ is fixed, this implies that
    \begin{align*}
        \sum_{\ell=0}^{k-1}\frac{\Big|\Big\{\sigma\in\Sn:\Inv(\sigma)=\ell\Big\}\Big|}{n^\ell}t^\ell\longrightarrow\sum_{\ell=0}^{k-1}\frac{t^\ell}{\ell!}
    \end{align*}
    
    Combining the previous results, when $n$ tends to infinity, we have that
    \begin{align*}
        \mathbb{P}\big(nT_k>t\big)\longrightarrow e^{-t}\sum_{\ell=0}^{k-1}\frac{t^\ell}{\ell!}=\int_t^\infty\frac{u^{k-1}e^{-u}}{(k-1)!}du\,,
    \end{align*}
    which proves that $nT_k\overset{d}{\rightarrow}\mathrm{Gamma}(k,1)$.
    
    We now prove the second part of the proposition, stating that the first jumping times of regular Mallows processes converge to a Poisson point process. Let $t\in[0,\infty)$ and $s>0$. Also, for any $\sigma\in\Sn$ let $I=(I_j)_{j\in[n]}=\Phi^{-1}(\sigma)$. Since the process $\Mt$ is regular, it has independent inversions, which implies that
    \begin{align}
        &\hspace{-0.5cm}\mathbb{P}\Big(\M_\frac{t+s}{n}=\sigma~\Big|~\M_\frac{t}{n}=\sigma,(\M_u)_{0\leq u<\frac{t}{n}}\Big)\label{eq:dependency}\\
        &=\mathbb{P}\Big(\forall j\in[n],\,\Inv_j(\M_\frac{t+s}{n})=I_j~\Big|~\forall j\in[n],\,\Inv_j(\M_\frac{t}{n})=I_j,\big(\Inv_j(\M_u)\big)_{0\leq u<\frac{t}{n}}\Big)\notag\\
        &=\prod_{j=1}^n\mathbb{P}\Big(\Inv_j(\M_\frac{t+s}{n})=I_j~\Big|~\Inv_j(\M_\frac{t}{n})=I_j,\big(\Inv_j(\M_u)\big)_{0\leq u<\frac{t}{n}}\Big)\,.\notag
    \end{align}
    Note that, whatever the value of $I_j\geq0$ is, we have that
    \begin{align*}
        &\hspace{-0.5cm}\mathbb{P}\Big(\Inv_j(\M_\frac{t+s}{n})=I_j~\Big|~\Inv_j(\M_\frac{t}{n})=I_j,\big(\Inv_j(\M_u)\big)_{0\leq u<\frac{t}{n}}\Big)\\
        &=1-\mathbb{P}\Big(\Inv_j(\M_\frac{t+s}{n})>I_j~\Big|~\Inv_j(\M_\frac{t}{n})=I_j,\big(\Inv_j(\M_u)\big)_{0\leq u<\frac{t}{n}}\Big)
    \end{align*}
    Moreover, by definition, we have
    \begin{align*}
        &\hspace{-0.5cm}\mathbb{E}\bigg[\mathbb{P}\Big(\Inv_j(\M_\frac{t+s}{n})>I_j~\Big|~\Inv_j(\M_\frac{t}{n})=I_j,\big(\Inv_j(\M_u)\big)_{0\leq u<\frac{t}{n}}\Big)\bigg]\\
        &=\mathbb{P}\Big(\Inv_j(\M_\frac{t+s}{n})>I_j~\Big|~\Inv_j(\M_\frac{t}{n})=I_j\Big)\\&\leq\frac{\mathbb{P}\big(\Inv_j(\M_\frac{t+s}{n})>I_j\big)}{\mathbb{P}\big(\Inv_j(\M_\frac{t}{n})=I_j\big)}\,,
    \end{align*}
    and by the definition of $\Mt$, it follows that
    \begin{align*}
        \frac{\mathbb{P}\big(\Inv_j(\M_\frac{t+s}{n})>I_j\big)}{\mathbb{P}\big(\Inv_j(\M_\frac{t}{n})=I_j\big)}=\frac{1-\frac{t}{n}}{\left(\frac{t}{n}\right)^{I_j}-\left(\frac{t}{n}\right)^j}\cdot\frac{\left(\frac{t+s}{n}\right)^{I_j+1}-\left(\frac{t+s}{n}\right)^j}{1-\frac{t+s}{n}}\underset{n\rightarrow\infty}{\longrightarrow}0\,.
    \end{align*}
    The previous results imply that
    \begin{align}
        \mathbb{P}\Big(\Inv_j(\M_\frac{t+s}{n})=I_j~\Big|~\Inv_j(\M_\frac{t}{n})=I_j,\big(\Inv_j(\M_u)\big)_{0\leq u<\frac{t}{n}}\Big)\longrightarrow1\label{eq:convTo1}
    \end{align}
    in probability, as $n$ goes to infinity. Now, recall that, by assumption, for any $j\in[n]$, the process $(\Inv_j(\M_t))_{t\in[0,\infty)}$ is increasing. This implies that, for $I_j=0$, we have
    \begin{align}
        &\hspace{-0.5cm}\mathbb{P}\Big(\Inv_j(\M_\frac{t+s}{n})=0~\Big|~\Inv_j(\M_\frac{t}{n})=0,\big(\Inv_j(\M_u)\big)_{0\leq u<\frac{t}{n}}\Big)\label{eq:0case}\\
        &=\mathbb{P}\Big(\Inv_j(\M_\frac{t+s}{n})=0~\Big|~\Inv_j(\M_\frac{t}{n})=0\Big)\notag\\
        &=\frac{\mathbb{P}\big(\Inv_j(\M_\frac{t+s}{n})=0\big)}{\mathbb{P}\big(\Inv_j(\M_\frac{t}{n})=0\big)}\notag\\
        &=\frac{1-\frac{t+s}{n}}{1-\left(\frac{t+s}{n}\right)^j}\cdot\frac{1-\left(\frac{t}{n}\right)^j}{1-\frac{t}{n}}\,.\notag
    \end{align}

    We will now combine \eqref{eq:convTo1} and \eqref{eq:0case} with \eqref{eq:dependency} to obtain the convergence of the sequence of the first jumping times of $\Mt$. Fix $k\geq0$. For any $\sigma\in\Sn$ such that $\Inv(\sigma)\leq k$, by separating according to whether $I_j=0$ or $I_j>0$, \eqref{eq:dependency} and \eqref{eq:0case} tell us that
    \begin{align*}
        &\hspace{-0.5cm}\mathbb{P}\Big(\M_\frac{t+s}{n}=\sigma~\Big|~\M_\frac{t}{n}=\sigma,(\M_u)_{0\leq u<\frac{t}{n}}\Big)\\
        &=\prod_{j:I_j>0}\mathbb{P}\Big(\Inv_j(\M_\frac{t+s}{n})=I_j~\Big|~\Inv_j(\M_\frac{t}{n})=I_j,\big(\Inv_j(\M_u)\big)_{0\leq u<\frac{t}{n}}\Big)\\
        &\hspace{0.5cm}\times\prod_{j:I_j=0}\frac{1-\frac{t+s}{n}}{1-\left(\frac{t+s}{n}\right)^j}\cdot\frac{1-\left(\frac{t}{n}\right)^j}{1-\frac{t}{n}}\,.
    \end{align*}
    Note that $|\{j:I_j>0\}|\leq\Inv(\sigma)\leq k$. Combining this fact with the result of \eqref{eq:convTo1} and the assumption that the different inversion numbers are independent processes, it follows that
    \begin{align*}
        \prod_{j:I_j>0}\frac{\mathbb{P}\Big(\Inv_j(\M_\frac{t+s}{n})=I_j~\Big|~\Inv_j(\M_\frac{t}{n})=I_j,\big(\Inv_j(\M_u)\big)_{0\leq u<\frac{t}{n}}\Big)}{\frac{1-\frac{t+s}{n}}{1-\left(\frac{t+s}{n}\right)^j}\cdot\frac{1-\left(\frac{t}{n}\right)^j}{1-\frac{t}{n}}}=1+o_\mathbb{P}(1)\,.
    \end{align*}
    This means that the previous result is
    \begin{align*}
        \mathbb{P}\Big(\M_\frac{t+s}{n}=\sigma~\Big|~\M_\frac{t}{n}=\sigma,(\M_u)_{u<\frac{t}{n}}\Big)&=\big(1+o_\mathbb{P}(1)\big)\prod_{j=1}^n\frac{1-\frac{t+s}{n}}{1-\left(\frac{t+s}{n}\right)^j}\cdot\frac{1-\left(\frac{t}{n}\right)^j}{1-\frac{t}{n}}
    \end{align*}
    as $n\rightarrow\infty$, where for any fixed $k\in\mathbb{N}$ the $o_\mathbb{P}(1)$ term is uniform over all permutations with $\Inv(\sigma)\leq k$. Using the convergence result stated in \eqref{eq:prod goes to 1}, we know that
    \begin{align*}
        \prod_{j=1}^n\frac{1-\left(\frac{t}{n}\right)^j}{1-\left(\frac{t+s}{n}\right)^j}\longrightarrow1\,,
    \end{align*}
    from which it follows that, for any $k\in\mathbb{N}$, uniformly over permutations $\sigma$ with $\Inv(\sigma)\leq k$,
    \begin{align*}
        \mathbb{P}\Big(\M_\frac{t+s}{n}=\sigma~\Big|~\M_\frac{t}{n}=\sigma,(\M_u)_{u<\frac{t}{n}}\Big)&=\big(1+o_\mathbb{P}(1)\big)\left(\frac{1-\frac{t+s}{n}}{1-\frac{t}{n}}\right)^n=\big(1+o_\mathbb{P}(1)\big)e^{-s}\,.
    \end{align*}
    This exactly corresponds to saying that the first inter jumping times of the process are asymptotically distributed as independent $\mathrm{Exponential}(1)$ random variables, proving the desired result.
\end{proof}

\subsection{Properties of the uniform Mallows processes}\label{subsec:propertiesUniform}

In this section we consider the uniform Mallows process $\Xt{\M^U}$ defined in Section~\ref{subsec:uniformMallows}. For the rest of this section, drop the superscript $U$ on $\M^U$ and write UMP instead of uniform Mallows process.

Recall from Definition~\ref{def:uniformModel} that the UMP is defined using a set of $n$ uniform random variables. This means that this process is generated from a well-controlled amount of randomness and it is naturally interesting to wonder when all the information can be retrieved from the evolution of the process.

Given an occurrence of the UMP $\Mt$ and a random time $T$, say that $U_j$ is \textit{retrievable} with respect to $\Mt$ and $T$ if the random variable $U_j$ is measurable with respect to the $\sigma$-algebra $\sigma(T,(\M_t)_{t\in[0,T]})$. Note that, once one of the component processes $(I_j(t))_{t\in[0,\infty)}$ jumps, it is possible to deduce the value of $U_j$ that generated this process (except for $j=1$ since $I_1(t)=0$ for all $t$). This remark implies that $U_2,\ldots,U_j$ are retrievable with respect to $\Mt$ and $T_{\binom{n}{2}}=\inf\{t>0:\Inv(\M_t)=\binom{n}{2}\}$. We are now interested in answering the following two questions:
\begin{itemize}
    \item What is the minimal time such that $U_2,\ldots,U_n$ are all retrievable?
    \item Given a deterministic time $t\in[0,\infty)$, how many of the variables $U_2,\ldots,U_n$ are retrievable?
\end{itemize}
We answer these two questions in the next two sections.

\subsubsection{Time of full information}

The following proposition describes the minimal full retrieval time of the UMP.

\begin{prop}\label{prop:retriveUniforms}
    Let $\Mt$ be the UMP. Then there exists an $\M_t$-stopping time $T_U$ such that $U_2,\ldots,U_n$ are retrievable with respect to $\Mt$ and $T_U$, and such that, for any $T$ such that $T<T_U$, not all $U_2,\ldots,U_n$ are retrievable with respect to $\Mt$ and $T$. Moreover, there exists a subset $A_U\subset\Sn$ such that
    \begin{align*}
        T_U=\inf\Big\{t\in[0,\infty):\M_t\in A_U\Big\}\,.
    \end{align*}
    Finally, we have that
    \begin{align*}
        \lim_{t\rightarrow\infty}t\mathbb{P}(T_U>t)=1\,,
    \end{align*}
    which implies that $\mathbb{E}[T_U]=\infty$.
\end{prop}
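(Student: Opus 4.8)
The plan is to identify $T_U$ explicitly in terms of the jumping times $T^j_1$ (the first jumping time of each coordinate process $(I_j(t))_{t\in[0,\infty)}$), then to track the asymptotics of its tail. First I would observe that for $j\geq 2$, the value $U_j$ can be recovered from $(\M_t)_{t\in[0,T]}$ as soon as the $j$-th coordinate process $(I_j(t))$ has jumped at least once: indeed, from Lemma~\ref{lem:uniformModel}, $I_j(t)=f_j(t;U_j)$ is a deterministic increasing step function of $U_j$, and knowing that $I_j$ jumps from $k$ to $k+1$ at a specific time $t_0$ pins $U_j$ down to a single value (since $f_j(\,\cdot\,;u)$ depends injectively on $u$ through the thresholds $\frac{t^k-1}{t^j-1}$). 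Conversely, before $(I_j(t))$ has jumped, $U_j$ only lies in an interval of positive length, so it is not retrievable. Hence $U_j$ is retrievable with respect to $\Mt$ and $T$ if and only if $T\geq T^j_1$, where $T^j_1=\inf\{t:I_j(t)\geq 1\}$. Since $\Phi$ is a bijection and a jump of $I_j$ corresponds precisely to an edge of $\mathcal{H}_n$ changing the $j$-th inversion number, these jumping times are $\M_t$-stopping times. Therefore $T_U:=\max\{T^j_1:2\leq j\leq n\}$ is an $\M_t$-stopping time with the stated minimality property, and setting $A_U=\{\sigma\in\Sn:\Inv_j(\sigma)\geq 1\text{ for all }2\leq j\leq n\}$ (equivalently, $\sigma(1)=n$, since that is the only way every $\Inv_j$ can be simultaneously positive — actually one should check this: $\Inv_j(\sigma)\geq 1$ for all $j\geq2$ forces $\sigma^{-1}(n)=1$) gives $T_U=\inf\{t:\M_t\in A_U\}$ because the coordinate processes are increasing, so once all of them have jumped the process has entered $A_U$ and stays there.

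Next I would compute the tail of $T_U$. By independence of the coordinate processes,
\begin{align*}
    \mathbb{P}(T_U\leq t)=\prod_{j=2}^n\mathbb{P}(T^j_1\leq t)=\prod_{j=2}^n\mathbb{P}(I_j(t)\geq 1)=\prod_{j=2}^n\left(1-\frac{1}{\sum_{\ell=0}^{j-1}t^\ell}\right)=\prod_{j=2}^n\frac{\sum_{\ell=1}^{j-1}t^\ell}{\sum_{\ell=0}^{j-1}t^\ell}\,,
\end{align*}
using the marginal law from Proposition~\ref{prop:generateMallows}. For $t>1$ write $\sum_{\ell=0}^{j-1}t^\ell=\frac{t^j-1}{t-1}$ and $\sum_{\ell=1}^{j-1}t^\ell=\frac{t^j-t}{t-1}$, so each factor equals $\frac{t^j-t}{t^j-1}=1-\frac{t-1}{t^j-1}$. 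Then $\mathbb{P}(T_U>t)=1-\prod_{j=2}^n\bigl(1-\frac{t-1}{t^j-1}\bigr)$, and for large $t$ the product is dominated by the $j=2$ factor: $1-\frac{t-1}{t^2-1}=1-\frac{1}{t+1}=\frac{t}{t+1}$, while all factors with $j\geq 3$ are $1-O(t^{-2})$ and their product is $1-O(t^{-2})$. Hence $\mathbb{P}(T_U>t)=1-\frac{t}{t+1}\bigl(1-O(t^{-2})\bigr)=\frac{1}{t+1}+O(t^{-2})$, which gives $\lim_{t\to\infty}t\,\mathbb{P}(T_U>t)=1$. Finally $\mathbb{E}[T_U]=\int_0^\infty\mathbb{P}(T_U>t)\,dt=\infty$ since the integrand is $\sim 1/t$ at infinity.

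The main obstacle, I expect, is not the asymptotic computation (which is elementary once the product formula is in hand) but rather the careful verification of the two structural claims: that $T_U$ is genuinely an $\M_t$-stopping time and, more subtly, that $T_U$ can be written as a hitting time of a fixed set $A_U\subset\Sn$. The latter requires arguing that "all coordinate processes have jumped at least once" is a property of the current state $\M_t$ alone — which works because each $(I_j(t))$ is increasing, so $\{I_j(t)\geq 1\}$ is absorbing and the event $\bigcap_{j\geq 2}\{I_j(t)\geq 1\}$ equals $\{\M_t\in A_U\}$ with $A_U=\Phi(\{I\in\mathcal{E}_n:I_j\geq 1\ \forall j\geq 2\})$. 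One should also double-check the retrievability direction carefully: that before $T^j_1$ the variable $U_j$ is truly not measurable with respect to the observed history — this is where one uses that $f_j(\,\cdot\,;u)$ is locally constant in $t$, so distinct values of $u$ in the relevant interval produce identical paths up to time $T$, hence $U_j$ cannot be recovered.
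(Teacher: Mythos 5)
Your proposal is correct and follows essentially the same route as the paper: characterize retrievability of $U_j$ by whether the coordinate process $(I_j(t))$ has made its first jump, take $A_U=\Phi(\{I\in\mathcal{E}_n:I_j\geq1,\ \forall j\geq2\})$, derive $\mathbb{P}(T_U<t)=\prod_{j=2}^n\bigl(1-\frac{1}{\sum_{\ell=0}^{j-1}t^\ell}\bigr)$ from independence of the coordinates, and extract the $1/t$ tail from the $j=2$ factor. Your write-up is in fact somewhat more careful than the paper's on the structural points (injectivity of the jump-time-to-$U_j$ map, non-retrievability before the first jump, and the observation that $A_U=\{\sigma:\sigma(1)=n\}$), but these are elaborations of the same argument rather than a different approach.
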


\begin{proof}
    By using that $U_i$ is retrievable if and only if $f_i(t;U_i)\geq1$, the definition of $A_U$ simply corresponds to
    \begin{align*}
        A_U&=\Big\{\sigma:\Phi^{-1}(\sigma)_j\geq1,\,\forall 2\leq j\leq n\Big\}\\
        &=\Phi\Big(\big\{I=(I_1,\ldots,I_n)\in\mathcal{E}_n:I_j\geq1,\,\forall2\leq j\leq n\big\}\Big)\,.
    \end{align*}
    This proves the first two statements of the proposition.
    
    For the third and last statement, by the definition of $A_U$ we have
    \begin{align*}
        \mathbb{P}\big(T_U<t\big)&=\mathbb{P}\Big(f_j(t;U_j)\geq1,\forall2\leq j\leq n\Big)\\
        &=\prod_{j=2}^n\left(1-\frac{1}{\sum_{\ell=0}^{j-1}t^\ell}\right)\,.
    \end{align*}
    It follows that, as $t$ tends to infinity (with $n$ being fixed), we have
    \begin{align*}
        \mathbb{P}\big(T_U<t\big)=\prod_{j=2}^n\left(1-(1+o(1))\frac{1}{t^{j-1}}\right)=1-\frac{1}{t}+o\left(\frac{1}{t}\right)\,.
    \end{align*}
    This proves that $t\mathbb{P}(T_U>t)=1+o(1)$ and concludes the proof of this proposition.
\end{proof}

\subsubsection{Amount of information}

In the previous section, we consider when does the UMP become completely deterministic and we show that there exists a specific time at which we have all the information on this process. However, we also prove that the expected time to have all the information on this process is infinite. In this section, we consider the converse approach and ask what amount of information we have access to by a given time.

Fix $t\in(0,\infty)$ and let $\I_t$ be the amount of information of the UMP at time $t$ defined by
\begin{align*}
    \I_t=\frac{1}{n}\times\Big|\Big\{j\in[n]:U_j\textrm{ is retrievable with respect to $\Mt$ and $t$}\Big\}\Big|\,.
\end{align*}
The fraction $\frac{1}{n}$ is used here so that $\I_t\in[0,1]$. The following proposition characterizes the behaviour of $\I_t$ for different regimes and, in particular, allows us to see a strong change of behaviour at $t=1$.

\begin{thm}\label{thm:convergenceIt}
    For any fixed $t\in[0,\infty)$, as $n$ tends to infinity, we have
    \begin{align*}
        \I_t\longrightarrow t\wedge 1\,,
    \end{align*}
    almost surely. Moreover, its fluctuations around its asymptotic value are given by the following formulas. For $t\in[0,1)$, we have that
    \begin{align*}
        \big(\sqrt{n}(\mathcal{I}_t-t)\big)_{t\in[0,1)}\overset{d}{\longrightarrow}(W_t)_{t\in[0,1)}\,,
    \end{align*}
    where $(W_t)_{t\in[0,1)}$ is the restriction to $[0,1)$ of a standard Brownian bridge and the convergence is with respect to the topology of uniform convergence on $\mathcal{C}([0,1))$. For $t=1$, the following central limit theorem holds
    \begin{align*}
        \frac{n}{\sqrt{\log n}}\left(\I_1-1+\frac{\log n}{n}\right)\overset{d}{\longrightarrow}\mathrm{Normal}(0,1)\,.
    \end{align*}
    Finally, for $t>1$, then
    \begin{align*}
        \big(n(1-\I_t)\big)_{t>1}\overset{d}{\longrightarrow}(X_t)_{t>1}\,,
    \end{align*}
    where $(X_t)_{t>1}$ is a decreasing process with moment generating function
    \begin{align*}
        \mathbb{E}[u^{X_t}]=\prod_{k=1}^\infty\left(1+(u-1)\frac{t-1}{t^k-1}\right)\,.
    \end{align*}
\end{thm}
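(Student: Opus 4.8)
The plan is to reduce everything to the independent variables $U_2,\dots,U_n$. Recall from the previous section that $U_j$ is retrievable with respect to $\M$ and $t$ if and only if $f_j(t;U_j)\geq 1$, equivalently $I_j(t)\geq 1$, which by Lemma~\ref{lem:uniformModel} happens with probability $1-1/\sum_{\ell=0}^{j-1}t^\ell$, and these events are independent over $j$ by Definition~\ref{def:uniformModel}. So writing $\xi_j=\xi_j(t)=\mathbbm{1}\{f_j(t;U_j)\geq 1\}$, I have $n\I_t=\sum_{j=2}^n\xi_j$ with $\xi_j$ independent Bernoulli of parameter $p_j(t)=1-1/\sum_{\ell=0}^{j-1}t^\ell$ (and $\xi_1\equiv0$). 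The almost sure convergence $\I_t\to t\wedge 1$ will follow from the strong law for triangular-array-like sums: for $t\le 1$, $p_j(t)=1-(1-t)/(1-t^j)\to t$ as $j\to\infty$ (interpreting the $t=1$ case as $p_j(1)=1-1/j\to 1$), so by Cesàro the mean of $n\I_t$ is $(1+o(1))t n$; combined with a Borel--Cantelli / Kolmogorov-type bound on $\sum \mathrm{Var}(\xi_j)/j^2<\infty$ this gives $\I_t\to t$ a.s. For $t>1$, $p_j(t)=1-(t-1)/(t^j-1)\to 1$ geometrically fast, so $\sum_{j\ge 2}(1-p_j(t))<\infty$ and by Borel--Cantelli only finitely many $\xi_j$ vanish, forcing $n(1-\I_t)=\sum_{j\ge2}(1-\xi_j)$ to stabilize and $\I_t\to 1$ a.s. Since the maps $t\mapsto f_j(t;U_j)$ are increasing (Lemma~\ref{lem:uniformModel}), $\I_t$ is monotone, which lets me upgrade pointwise a.s. convergence to a.s. convergence as processes where needed.

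For the fluctuation statements I treat the three regimes separately. For $t\in[0,1)$: with $\mu_n(t)=\sum_{j=2}^n p_j(t)$, one computes $\sqrt n(\I_t - \mu_n(t)/n)$ and checks that $\mu_n(t)/n = t + O(1/n)$ uniformly on compacts of $[0,1)$, using $p_j(t)-t = -(1-t)t^j/(1-t^j)$ which is summable. So $\sqrt n(\I_t-t)$ has the same limit as $\frac{1}{\sqrt n}\sum_{j=2}^n(\xi_j(t)-p_j(t))$. To get the Brownian bridge I would apply a functional CLT (e.g. a martingale/triangular-array Donsker theorem) to the process $t\mapsto \frac{1}{\sqrt n}\sum_{j=2}^n(\xi_j(t)-p_j(t))$; the covariance at $s\le t$ is $\frac{1}{n}\sum_j \mathrm{Cov}(\xi_j(s),\xi_j(t))=\frac1n\sum_j(p_j(s)-p_j(s)p_j(t))\to s(1-t)=s\wedge t - st$, which is exactly the Brownian bridge covariance — the appearance of the bridge is thus forced by $p_j(s)\wedge p_j(t)=p_j(s)$ together with $\lim p_j = \mathrm{id}$. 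Tightness follows from a standard increment moment bound since the $\xi_j$ are independent and uniformly bounded. For $t>1$: $n(1-\I_t)=\sum_{j=2}^n(1-\xi_j(t))$ is a finite sum (a.s.) of independent Bernoulli$(1-p_j(t))=$ Bernoulli$((t-1)/(t^j-1))$ indicators, so its generating function is exactly $\prod_{j=2}^n(1+(u-1)\frac{t-1}{t^j-1})\to\prod_{k=1}^\infty(1+(u-1)\frac{t-1}{t^k-1})$ (the product converges since $\sum(t^k-1)^{-1}<\infty$); monotonicity in $t$ of each $1-\xi_j(t)$ gives the decreasing-process statement and lets me identify the limit as an honest process $(X_t)_{t>1}$. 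The convergence of finite-dimensional distributions of $(n(1-\I_t))_{t>1}$ to those of $(X_t)_{t>1}$ comes from the same exact-generating-function computation applied jointly.

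The main obstacle is the boundary case $t=1$. There $p_j(1)=1-1/j$, so $\mu_n(1)=\sum_{j=2}^n(1-1/j)=n-1-(H_n-1)=n-\log n+O(1)$, which explains the centering $1-\frac{\log n}{n}$, and $\mathrm{Var}(n\I_1)=\sum_{j=2}^n\frac1j(1-\frac1j)=\log n + O(1)$, explaining the scaling $\sqrt{\log n}$. The delicate point is that $\I_1$ is \emph{not} on the same scale as $\I_t$ for fixed $t<1$ or $t>1$ and the variance diverges only logarithmically, so one must verify a Lindeberg/Lyapunov condition for the array $\{(\xi_j(1)-p_j(1))/\sqrt{\log n}\}_{2\le j\le n}$: the summands are bounded by $1/\sqrt{\log n}\to 0$ and $\sum_j \mathbb{E}|\xi_j(1)-p_j(1)|^3 \le \sum_j \mathrm{Var}(\xi_j(1)) = \log n + O(1)$, so Lyapunov's ratio is $O((\log n)/(\log n)^{3/2})=O((\log n)^{-1/2})\to 0$, giving the stated normal limit. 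One should also double check that these limiting objects are mutually compatible as $t$ crosses $1$ — but since the theorem only asserts three separate regime statements, it suffices to prove each on its own, and I would not attempt to glue them into a single process statement.
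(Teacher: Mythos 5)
Your decomposition $n\mathcal{I}_t=\sum_j\xi_j(t)$ into independent Bernoulli indicators is exactly the paper's starting point, and your treatments of the almost sure convergence, the $t=1$ central limit theorem (Lyapunov with mean $n-\log n+O(1)$ and variance $\log n+O(1)$), and the $t>1$ generating-function computation all match the paper's arguments in substance. Where you genuinely diverge is the Brownian bridge regime: the paper does not prove a functional CLT for the inhomogeneous array directly. Instead it couples $(\xi_j(t))_{t\in[0,s]}$ with $\min\{1,\lfloor\tilde U_j\tfrac{1-t^j}{1-t}\rfloor\}$, compares the resulting process to the empirical distribution function $\tilde{\mathcal{J}}_t=\tfrac1n\sum_j\mathbbm{1}_{\{1-\tilde U_j\le t\}}$, invokes the classical Donsker theorem for empirical processes, and then shows via a union bound and a Chernoff estimate that the sup-norm discrepancy $N=\sup_{t\le s}n|\tilde{\mathcal{I}}_t-\tilde{\mathcal{J}}_t|$ is $O(\log n)$ with probability $1-O(1/n)$, so the correction vanishes after dividing by $\sqrt n$. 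Your route instead verifies the bridge covariance $\tfrac1n\sum_j p_j(s)(1-p_j(t))\to s(1-t)$ and appeals to a triangular-array invariance principle; this is viable, but tightness is the one place where ``standard increment moment bound'' hides real work (the summands are not identically distributed, and $\sqrt n(\mathcal{I}_t-t)$ is not monotone, so you cannot dodge tightness via a P\'olya-type argument) --- the paper's coupling buys you tightness for free from the known empirical-process result at the cost of the error estimate on $N$. One small correction for $t>1$: since $\xi_1\equiv0$, you have $n(1-\mathcal{I}_t)=1+\sum_{j=2}^n(1-\xi_j(t))$, so your finite product $\prod_{j=2}^n\bigl(1+(u-1)\tfrac{t-1}{t^j-1}\bigr)$ is missing the deterministic factor $u$ coming from $j=1$ (note $\tfrac{t-1}{t^1-1}=1$); with that factor restored the limit is indeed $\prod_{k=1}^\infty\bigl(1+(u-1)\tfrac{t-1}{t^k-1}\bigr)$ as stated.
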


\begin{proof}
    All the results of this proposition will strongly depend on the decomposition
    \begin{align*}
        \mathcal{I}_t=\frac{1}{n}\sum_{j=1}^n X_j(t)\,,
    \end{align*}
    where $X_j(t)$ is the indicator that $U_j$ is retrievable from $\Mt$ and $t$. Note that the different random variables $(X_j(t))_{j\in[n]}$ are independent Bernoulli random variables with
    \begin{align}
        \mathbb{P}\big(X_j(t)=0\big)=1-\mathbb{P}\big(X_j(t)=1\big)=\frac{1}{\sum_{\ell=0}^{j-1}t^\ell}\,.\label{eq:Xj}
    \end{align}

    For any $t\in[0,\infty)$, since $(X_j(t))_{j\in[n]}$ are all bounded by $1$, the strong law of large numbers applies~\cite{etemadi1983laws} and yields that $\mathcal{I}_t$ almost surely converges to its asymptotic expected value. Furthermore, we have that
    \begin{align*}
        \mathbb{E}[\mathcal{I}_t]&=\frac{1}{n}\sum_{j=1}^n\mathbb{E}[X_j(t)]=\frac{1}{n}\sum_{j=1}^n\left[1-\frac{1}{\sum_{\ell=0}^{j-1}t^\ell}\right]\,.
    \end{align*}
    For $t=1$, this gives us that
    \begin{align}
        \mathbb{E}[\mathcal{I}_1]=1-\frac{1}{n}\sum_{j=1}^n\frac{1}{j}=1-\frac{\log n}{n}+O\left(\frac{1}{n}\right)=1-o(1)\,,\label{eq:E(I1)}
    \end{align}
    Moreover, since $\Xt{\mathcal{I}}$ is increasing in $t$ and takes values in $[0,1]$, it follows that $\mathbb{E}[\mathcal{I}_t]=1-o(1)$ for $t>1$. Finally, for $t<1$, we have
    \begin{align*}
        \mathbb{E}[\mathcal{I}_t]=\frac{1}{n}\sum_{j=1}^n\left[1-\frac{1-t}{1-t^j}\right]=\frac{1}{n}\left[nt-\sum_{j=1}^n\frac{t^j(1-t)}{1-t^j}\right]=t+O\left(\frac{1}{n}\right)=t+o(1)\,;
    \end{align*}
    together, these facts establish the first statement of the proposition.
    
    For the second statement, we show that, for any $s<1$, $(\sqrt{n}(\mathcal{I}_t-t))_{t\in[0,s]}$ converges in distribution to the restriction to $[0,s]$ of a Brownian bridge. Fix $s\in(0,1)$. First of all, using that $t\mapsto X_j(t)$ is increasing, along with \eqref{eq:Xj}, it follows that \begin{align*}
        (X_j(t))_{t\in[0,s]}\overset{d}{=}\left(\min\left\{1,\left\lfloor U\frac{1-t^j}{1-t}\right\rfloor\right\}\right)_{t\in[0,s]}\,,
    \end{align*}
    where $U$ is a $\mathrm{Uniform}([0,1])$ random variable. Since the variables $(X_j(t))_{t\in[0,s]}$ are independent over $j\in[n]$, fixing a set of $n$ independent $\mathrm{Uniform}([0,1])$ random variables $(\Tilde{U}_1,\ldots,\Tilde{U}_n)$, we then have
    \begin{align}
        (\mathcal{I}_t)_{t\in[0,s]}\overset{d}{=}(\Tilde{\mathcal{I}}_t)_{t\in[0,s]}:=\left(\frac{1}{n}\sum_{j=1}^n\min\left\{1,\left\lfloor\Tilde{U}_j\frac{1-t^j}{1-t}\right\rfloor\right\}\right)_{t\in[0,s]}\,.\label{eq:def I}
    \end{align}
    The relation between $(U_1,\ldots,U_n)$ in the definition of the UMP and $(\Tilde{U}_1,\ldots,\Tilde{U}_n)$ here does not need to be specified; however it is worth noting that they are not the same variables. Consider now the process
    \begin{align}
        (\Tilde{\mathcal{J}}_t)_{t\in[0,s]}:=\left(\frac{1}{n}\sum_{j=1}^n\min\left\{1,\left\lfloor\Tilde{U}_j\frac{1}{1-t}\right\rfloor\right\}\right)_{t\in[0,s]}\,.\label{eq:def J}
    \end{align}
    and note that, since $t\leq s<1$, we have
    \begin{align*}
        \min\left\{1,\left\lfloor\Tilde{U}_j\frac{1}{1-t}\right\rfloor\right\}=\mathbbm{1}_{1-U_j\leq t}\,.
    \end{align*}
    This means that $(\Tilde{\mathcal{J}}_t)_{t\in[0,s]}$ is the empirical distribution function of $(1-\Tilde{U}_1,\ldots,1-\Tilde{U_n})$ restricted to $[0,s]$, and from~\cite[Corollary~20.14]{aldous1985exchangeability}, we know that $(\sqrt{n}(\Tilde{\mathcal{J}}_t-t))_{t\in[0,s]}$ converges to the restriction to $[0,s]$ of a standard Brownian bridge (in fact the convergence occurs on $[0,1]$, but we are only interested in its behaviour on $[0,s]$). Hence, it only remains to relate $\Tilde{\mathcal{I}}$ and $\Tilde{\mathcal{J}}$.
    
    Using that $\frac{1-t^j}{1-t}\leq\frac{1}{1-t}$, we have
    \begin{align*}
        &\min\left\{1,\left\lfloor\Tilde{U}_j\frac{1-t^j}{1-t}\right\rfloor\right\}\neq\min\left\{1,\left\lfloor\Tilde{U}_j\frac{1}{1-t}\right\rfloor\right\}\\
        \Longleftrightarrow~&\min\left\{1,\left\lfloor\Tilde{U}_j\frac{1-t^j}{1-t}\right\rfloor\right\}=0\textrm{ and }\min\left\{1,\left\lfloor\Tilde{U}_j\frac{1}{1-t}\right\rfloor\right\}=1\\
        \Longleftrightarrow~&\Tilde{U}_j\frac{1-t^j}{1-t}<1\textrm{ and }\Tilde{U}_j\frac{1}{1-t}\geq1\\
        \Longleftrightarrow~&\Tilde{U}_j\in\left[1-t,\frac{1-t}{1-t^j}\right)\,.
    \end{align*}
    Now, let $(N_t)_{t\in[0,s]}$ be the random process defined by
    \begin{align*}
        N_t:&=\left|\left\{j\in[n]:\min\left\{1,\left\lfloor\Tilde{U}_j\frac{1-t^j}{1-t}\right\rfloor\right\}\neq\min\left\{1,\left\lfloor\Tilde{U}_j\frac{1}{1-t}\right\rfloor\right\}\right\}\right|\\
        &=\left|\left\{j\in[n]:\Tilde{U}_j\in\left[1-t,\frac{1-t}{1-t^j}\right)\right\}\right|
    \end{align*}
    and let $N:=\max_{t\in[0,s]}N_t$. The definition of $N_t$ along with the definition of $(\Tilde{\mathcal{I}}_t)_{t\in[0,s]}$ and $(\Tilde{\mathcal{J}}_t)_{t\in[0,s]}$ in \eqref{eq:def I} and \eqref{eq:def J} immediately imply that
    \begin{align}
        \big|\Tilde{\mathcal{I}}_t-\Tilde{\mathcal{J}}_t\big|\leq\frac{N_t}{n}\,,\label{eq:IJ and N}
    \end{align}
    which also means that $N$ is a uniform bound over the difference between the two processes $(\Tilde{\mathcal{I}}_t)_{t\in[0,s]}$ and $(\Tilde{\mathcal{J}}_t)_{t\in[0,s]}$. We now provide an upper tail bounds on $N$.
    
    Before providing the upper tail bound on $N$, we claim that
    \begin{align}
        N=\max_{i\in[n]:1-\Tilde{U}_i\leq s}\big\{N_{1-\Tilde{U}_i}\big\}\,.\label{eq:N and U}
    \end{align}
    To see this, fix $t\in[0,s]$ and let $\delta>0$ be such that $[1-(t+\delta),1-t)\cap\{\Tilde{U}_1,\ldots,\Tilde{U}_n\}=\emptyset$. Since the map $u\mapsto(1-u)/(1-u^j)$ is decreasing for any $j\in[n]$, if $\Tilde{U}_j\in[1-(t+\delta),(1-(t+\delta))/(1-(t+\delta)^j))$, then $\Tilde{U}_j\in[1-t,(1-t)/(1-t^j)]$. Thus
    \begin{align*}
        \left\{j\in[n]:\Tilde{U}_j\in\left[1-(t+\delta),\frac{1-(t+\delta)}{1-(t+\delta)^j}\right)\right\}\subseteq\left\{j\in[n]:\Tilde{U}_j\in\left[1-t,\frac{1-t}{1-t^j}\right)\right\}
    \end{align*}
    and so $N_{t+\delta}\leq N_t$. Since $t\in[0,s]$ was arbitrary, this proves that $t\mapsto N_t$ only has increments at times $\{1-\Tilde{U}_1,\ldots,1-\Tilde{U}_n\}$, establishing \eqref{eq:N and U}.
    
    Thanks to \eqref{eq:N and U} and the definition of $N_t$, we now have
    \begin{align*}
        N&=\max_{i\in[n]:1-\Tilde{U}_i\leq s}\left|\left\{j\in[n]:\Tilde{U}_j\in\left[\Tilde{U}_i,\frac{\Tilde{U}_i}{1-(1-\Tilde{U}_i)^j}\right)\right\}\right|\,.
    \end{align*}
    Using that
    \begin{align*}
        \frac{1-t}{1-t^j}=1-t+\frac{t^j(1-t)}{1-t^j}\leq 1-t+s^j
    \end{align*}
    for any $t\leq s<1$, it follows that
    \begin{align*}
        \frac{\Tilde{U}_i}{1-(1-\Tilde{U}_i)^j}\leq\Tilde{U}_i+s^j
    \end{align*}
    for any $1-\Tilde{U}_i\leq s$, and then
    \begin{align*}
        N&\leq\max_{i\in[n]:1-\Tilde{U}_i\leq s}\left|\Big\{j\in[n]:\Tilde{U}_j\in\big[\Tilde{U}_i,\Tilde{U}_i+s^j\big)\Big\}\right|\leq\max_{i\in[n]}\left|\Big\{j\in[n]:\Tilde{U}_j\in\big[\Tilde{U}_i,\Tilde{U}_i+s^j\big)\Big\}\right|\,.
    \end{align*}
    Fix now $k\in\mathbb{N}$ and use the previous bound to see that
    \begin{align}
        \mathbb{P}(N\geq k)&\leq\sum_{i=1}^n\mathbb{P}\left(\sum_{j\in[n]:j\neq i}\mathbbm{1}_{\{\Tilde{U}_j\in[U_i,U_i+s^j)\}}\geq k-1\right)\label{eq:bound N and k}\\
        &=\sum_{i=1}^n\mathbb{P}\left(\sum_{j\in[n]:j\neq i}\mathbbm{1}_{\{\Tilde{U}_j<s^j\}}\geq k-1\right)\,,\notag
    \end{align}
    where the second inequality uses that the variables $(\Tilde{U}_j)_{j\in[n]}$ are independent $\mathrm{Uniform}([0,1])$. Moreover, for any $i\in[n]$, by Markov's inequality we have
    \begin{align}
        \mathbb{P}\left(\sum_{j\in[n]:j\neq i}\mathbbm{1}_{\{\Tilde{U}_j<s^j\}}\geq k-1\right)&=\mathbb{P}\left(\exp\left(\sum_{j\in[n]:j\neq i}\mathbbm{1}_{\{\Tilde{U}_j<s^j\}}\right)\geq e^{k-1}\right)\label{eq:sum Ui}\\
        &\leq e^{-k+1}\mathbb{E}\left[\exp\left(\sum_{j\in[n]:j\neq i}\mathbbm{1}_{\{\Tilde{U}_j<s^j\}}\right)\right]\notag\\
        &=e^{-k+1}\prod_{j\in[n]:j\neq i}\Big(1+(e-1)s^j\Big)\notag\\
        &\leq\exp\left(-k+1+(e-1)\frac{1}{1-s}\right)\,,\notag
    \end{align}
    where the last inequality follows from convexity of the exponential and by taking the product over all $j\geq0$ rather than over $j\in[n]\setminus\{i\}$. Combining \eqref{eq:bound N and k} and \eqref{eq:sum Ui}, we obtain
    \begin{align}
        \mathbb{P}(N\geq k)\leq\exp\left(\log n-k+\left[1+(e-1)\frac{1}{1-s}\right]\right)\,.\label{eq:bound N}
    \end{align}
    which we now use to conclude the convergence of $(\sqrt{n}(\mathcal{I}_t-t))_{t\in[0,s]}$ to the Brownian bridge.

    Using the definition of $\Tilde{\mathcal{I}}$ and $\Tilde{\mathcal{J}}$ from \eqref{eq:def I} and \eqref{eq:def J} along with \eqref{eq:IJ and N}, we have that
    \begin{align*}
        \sup_{t\in[0,s]}\big|\Tilde{\mathcal{I}}_s-\Tilde{\mathcal{J}}_t\big|=\frac{1}{n}N\,,
    \end{align*}
    from which \eqref{eq:bound N} tells us that
    \begin{align*}
        \mathbb{P}\left(\sup_{t\in[0,s]}\big|\Tilde{\mathcal{I}}_t-\Tilde{\mathcal{J}}_t\big|\geq\frac{2\log n}{n}\right)\leq\frac{C_s}{n}\,,
    \end{align*}
    where $C_s=\exp(1+(e-1)/(1-s))$. Moreover, we already showed that $(\sqrt{n}(\Tilde{\mathcal{J}}_t-t))_{t\in[0,s]}$ converges to the restriction to $[0,s]$ of a standard Brownian bridge. The desired convergence now simply follows since $(\mathcal{I}_t)_{t\in[0,s]}\overset{d}{=}(\Tilde{\mathcal{I}}_t)_{t\in[0,s]}$.

    Consider now the case $t=1$. We already showed that $\mathbb{E}[\mathcal{I}_1]=1-o(1)$. Moreover, we have
    \begin{align*}
        \mathrm{Var}\left[\mathcal{I}_1\right]=\frac{1}{n^2}\mathrm{Var}\left[\sum_{j=1}^nX_j(1)\right]=\frac{1}{n^2}\sum_{j=1}^n\mathrm{Var}[X_j(1)]\,,
    \end{align*}
    and using that $X_j(1)$ is a $\mathrm{Bernoulli}((j-1)/j)$, it follows that
    \begin{align*}
        \mathrm{Var}\left[\mathcal{I}_1\right]=\frac{1}{n^2}\sum_{j=1}^n\frac{j-1}{j^2}=\big(1+o(1)\big)\frac{\log n}{n^2}
    \end{align*}
    Since $X_j(t)\leq 1$, the conditions of the Lindeberg-Feller theorem~\cite[Theorem~3.4.10]{durrett2019probability} are met, and it follows that
    \begin{align*}
        \frac{\mathcal{I}_1-\mathbb{E}[\mathcal{I}_1]}{\sqrt{\mathrm{Var}(\mathcal{I}_1)}}\overset{d}{\longrightarrow}\mathrm{Normal}(0,1)\,.
    \end{align*}
    This convergence combined with the previous asymptotic approximation of $\mathrm{Var}[\mathcal{I}_1]$ and \eqref{eq:E(I1)} exactly corresponds to the desired result.
    
    For the last assertion of the proposition, using once again the distribution of $X_j(t)$, for $t>1$ we have that
    \begin{align*}
        \mathbb{E}\left[e^{iun(1-\mathcal{I}_t)}\right]&=e^{iun}\prod_{j=1}^n\left[1+(e^{-iu}-1)\left(1-\frac{t-1}{t^j-1}\right)\right]\\
        &=\prod_{j=1}^n\left[e^{iu}+(1-e^{iu})\left(1-\frac{t-1}{t^j-1}\right)\right]\\
        &=\prod_{j=1}^n\left[1+(e^{iu}-1)\frac{t-1}{t^j-1}\right]\,,
    \end{align*}
    and the final product converges to the desired function as $n$ goes to infinity. To conclude this proof, simply note that $(\mathcal{I}_t)_{t>1}$ is increasing to see that $(X_t)_{t>1}$ is decreasing, corresponding to the last statement of the proposition.
\end{proof}

\section{Conclusion and open questions}\label{sec:conclusion}

This work introduces Mallows processes and describes two natural ways to generate regular Mallows processes. However, many properties of these processes remain to be proven; we now state some possible directions for future work on Mallows processes and their properties.

\subsection{Graph structure}

Theorem~\ref{thm:MallowsGraph} states that $\Ta\subseteq\langle\G_\M\rangle\subseteq\T$ for any smooth Mallows process $\M$, and that $\langle\G_\M\rangle=\T$ whenever $\M$ is regular. The proof of the second statement strongly relies on the independent inversion property of $\M$ when it is regular, and we do not necessarily expect to have $\langle\G_\M\rangle=\T$ if we drop this assumption.

\begin{conj}
    For any $n\geq3$, there exists a smooth Mallows process $\M=\Mt$ such that
    \begin{align*}
        \langle\G_\M\rangle\subsetneq\T\,.
    \end{align*}
\end{conj}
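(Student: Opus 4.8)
The plan is to construct, for any $n\geq 3$, a smooth Mallows process whose transition graph $\G_\M$ is missing at least one edge of $\mathcal{H}_n$ that is essential for $\langle\G_\M\rangle$ to contain all of $\T$. The guiding idea is that smoothness forces $\G_\M\subseteq\mathcal{H}_n$ and $\Ta\subseteq\langle\G_\M\rangle\subseteq\T$, but without the independent-inversions assumption we have freedom to \emph{couple} the jumps of the coordinate processes $(\Inv_j(\M_t))_t$ so that certain combinations of inversion-vector values are never visited as a $(\M_{t-},\M_t)$ pair. Concretely, I would retain the framework of Proposition~\ref{prop:generateMallows}: build a c\`adl\`ag process $(I(t))_{t\in[0,\infty)}$ on $\mathcal{E}_n$ with the correct marginals $\mathbb{P}(I_j(t)=k)=t^k/\sum_{\ell=0}^{j-1}t^\ell$ and with increasing, unit-increment coordinates, so that $\Mt=(\Phi(I(t)))_t$ is automatically a smooth Mallows process, and then arrange the \emph{joint} law so that one specific directed edge of $\mathcal{H}_n$ has probability zero of being traversed.

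The key steps, in order: (1) Identify a transposition $\transp{i}{j}\in\T\setminus\Ta$ (so $j>i+1$) together with the \emph{unique} pair $(I,I')\in\mathcal{E}_n^2$ realizing it via an edge of $\mathcal{H}_n$ — recall from the proof of Theorem~\ref{thm:MallowsGraph} that $\transp{i}{j}$ arises from $\Phi(I)\to\Phi(I')$ where $I_j=j-i-1$, $I'_j=j-i$, and all other coordinates agree; but one should check whether $\transp{i}{j}$ could also arise from other edges of $\mathcal{H}_n$, which would require killing several edges simultaneously. (2) Choose the simplest nontrivial case, $n=3$, $i=1$, $j=3$: here $\transp{1}{3}$ is the swap of positions $1$ and $3$, and the only edge of $\mathcal{H}_3$ with $\sigma^{-1}\cdot\sigma'=\transp{1}{3}$ is $((2,3,1),(3,2,1))$, corresponding to $I=(0,1,1)\to I'=(0,1,2)$, i.e.\ the jump $\Inv_3:1\to 2$ while $\Inv_2=1$. (3) Design $(I(t))_t$ so that the coordinate $\Inv_3$ never jumps from $1$ to $2$ while $\Inv_2$ already equals $1$ — equivalently, couple the jump times so that on the event $\{\Inv_3 \text{ reaches } 2\}$ we have $\Inv_3$ jump $0\to1\to2$ only at times when $\Inv_2$ is still $0$, or alternatively force $\Inv_2$ to jump to $1$ only \emph{after} $\Inv_3$ has already reached its final value. (4) Verify that such a coupling is compatible with the prescribed one-dimensional marginals: each $\Inv_j(t)$ must individually be distributed as $t^k/\sum_\ell t^\ell$, which constrains the marginal jump-time laws but leaves the joint law free; one builds it explicitly, e.g.\ by defining $\Inv_3$ from one uniform and then defining the jump time of $\Inv_2$ as a suitable function of that uniform plus an independent one, chosen to have the right marginal yet to lie in the desired region relative to $\Inv_3$'s jump times.

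The main obstacle I expect is step (4) together with the global consistency check in step (1): it is easy to describe a coupling that avoids the bad edge, but one must simultaneously (a) keep every marginal exactly Mallows, (b) keep all coordinates increasing with unit jumps (so that the process is genuinely smooth, not merely monotone), and (c) ensure that \emph{every} $\mathcal{H}_n$-edge whose generator is $\transp{i}{j}$ is avoided, not just one — otherwise $\transp{i}{j}$ might re-enter $\langle\G_\M\rangle$ through a different edge. For $n=3$ this is clean because $\transp{1}{3}$ has a unique realizing edge, so once the $n=3$ construction works one gets $\langle\G_\M\rangle\subseteq\T\setminus\{\transp{1}{3}\}\subsetneq\T$; for general $n\geq 3$ one then lifts the $n=3$ example by taking a process that mimics this behaviour on positions $\{1,2,3\}$ and runs the remaining coordinates $\Inv_4,\ldots,\Inv_n$ independently as in the birth construction, checking that the target transposition still has all its realizing edges suppressed. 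A careful enumeration argument — counting the edges of $\mathcal{H}_n$ that map to a fixed transposition under the generator map — is the routine-but-necessary piece that makes the general $n$ statement go through.
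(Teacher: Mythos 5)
This statement is a \emph{conjecture} that the paper explicitly leaves open, so there is no proof in the paper to compare your proposal against; I can only assess it on its own terms. Your high-level strategy (smoothness forces $\G_\M\subseteq\mathcal{H}_n$, so kill every $\mathcal{H}_n$-edge realizing some fixed $\tau\in\T\setminus\Ta$ by coupling the coordinate jump processes) is the natural first attempt, but it contains a concrete computational error and, more seriously, a structural obstruction that I believe is fatal not only to your construction but to the conjecture itself. The computational slip: in $\mathcal{H}_3$ the edge $((2,3,1),(3,2,1))$ is the edge $I=(0,0,2)\to(0,1,2)$, not $(0,1,1)\to(0,1,2)$, and its generator is $\transp{1}{2}\in\Ta$, not $\transp{1}{3}$. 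The edges of $\mathcal{H}_3$ whose generator is $\transp{1}{3}$ are $((1,3,2),(2,3,1))$, i.e.\ $\Inv_3:1\to2$ while $\Inv_2=0$, and $((2,1,3),(3,1,2))$, i.e.\ $\Inv_3:0\to1$ while $\Inv_2=1$. So uniqueness fails, and the couplings you sketch in step (3) — for instance letting $\Inv_2$ jump only after $\Inv_3$ has reached its final value — would themselves traverse the first of these two genuine $\transp{1}{3}$-edges.

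The fatal obstruction is your own item (a): keeping every time-$t$ marginal exactly Mallows is not a routine consistency check but an impossibility. Since $\pi_{n,t}(\sigma)\propto\prod_j t^{\Inv_j(\sigma)}$ and $\Phi$ is a bijection onto the product set $\mathcal{E}_n$, the coordinates $\Inv_1(\M_t),\ldots,\Inv_n(\M_t)$ are automatically \emph{independent at every fixed time} $t$, and every permutation has positive probability under $\pi_{n,t}$ for every $t>0$. Now fix $\transp{k}{\ell}$ with $k<\ell$ and consider $I^*\in\mathcal{E}_n$ with $I^*_\ell=\ell-k$ and all other coordinates zero. For a smooth process each jump increases exactly one coordinate of the inversion vector by exactly one, so the \emph{only} state from which $\Phi(I^*)$ can be entered is $\Phi(I')$ with $I'_\ell=\ell-k-1$ and all other coordinates zero (the remaining coordinates of $I^*$ are already $0$ and cannot have been smaller); by the paper's own computation in \eqref{eq:PhiI} and \eqref{eq:PhiI'}, this unique entering edge has generator exactly $\transp{k}{\ell}$. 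Since $\M_0=\id$ almost surely (as $\pi_{n,0}$ is a point mass at $\id$) while $\mathbb{P}(\M_t=\Phi(I^*))>0$ for every $t>0$, the process enters $\Phi(I^*)$ with positive probability, hence traverses that edge with positive probability, hence $\transp{k}{\ell}\in\langle\G_\M\rangle$. As $\transp{k}{\ell}$ was arbitrary, \emph{every} smooth Mallows process satisfies $\langle\G_\M\rangle=\T$. For $n=3$ the same tension is visible directly: avoiding both $\transp{1}{3}$-edges forces the almost-sure ordering $T^3_1<T^2_1<T^3_2$ of the coordinate jump times, which makes $\{\Inv_2(\M_t)=1\}\subseteq\{\Inv_3(\M_t)\geq1\}$ and contradicts the independence of $\Inv_2(\M_t)$ and $\Inv_3(\M_t)$ under $\pi_{3,t}$. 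Unless I am misreading the definitions of $E(\G_\M)$ or of smoothness, this refutes the conjecture (and the paper's surrounding remark that $\langle\G_\M\rangle=\Ta$ is achievable for $n=3$); in any case it shows that your proposed route cannot be completed, because no joint law of the jump times compatible with the Mallows marginals can suppress all realizing edges of any transposition.
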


Furthermore, when $n=3$ (the first non-trivial case), it is not hard to see that it is even possible to define $\M$ so that $\langle\G_\M\rangle=\Ta$. Whether this result can be generalized to any $n\geq3$ or not is unclear, leading to the following question.

\begin{question}
    What is the value $N\in\mathbb{N}\cup\{\infty\}$ such that, for any $n<N$, there exists a smooth Mallows process $\M=\Mt$ taking values in $\Sn$ with
    \begin{align*}
        \langle\G_\M\rangle=\Ta\,,
    \end{align*}
    and for any $n\geq N$, any smooth Mallows process $\M=\Mt$ taking values in $\Sn$ has the property that
    \begin{align*}
        \Ta\subsetneq\langle\G_\M\rangle\,.
    \end{align*}
\end{question}

Note that, in this question, we allow for $N=\infty$, which would mean that for any $n$, there exists a smooth Mallows process $\M$ with $\langle\G_\M\rangle=\Ta$. The fact that such $N$ exists can be verified as follows. Fix $n\in\mathbb{N}$ and consider a smooth Mallows process $\M=\Mt$ taking values in $\Sn$ such that $\langle\G_\M\rangle=\Ta$. Then define a Mallows process $\Tilde{\M}=\Xt{\Tilde{\M}}$ taking values in $\mathcal{S}_{n-1}$ by
\begin{align*}
    \Tilde{\M}_t(i)=\left\{\begin{array}{ll}
        \M_t(i) & \textrm{if $i<\M_t^{-1}(n)$} \\
        \M_t(i+1) & \textrm{if $\M_t^{-1}(n)\leq i<n-1$}
    \end{array}\right.\,.
\end{align*}
In other words, if $\M_t=(s_1,\ldots,s_{j-1},n,s_{j+1},\ldots,s_n)$, then $\Tilde{\M}_t=(s_1,\ldots,s_{j-1},s_{j+1},\ldots,s_n)$, which simply corresponds to removing the value $n$ in the permutation $\M_t$. It is then easy to verify that $\Xt{\Tilde{\M}}$ is a smooth Mallows process with $\langle\G_{\Tilde{\M}}\rangle=\Ta$.

\subsection{Markov processes and Markov chains}

The birth Mallows process is the unique regular Mallows process that is also a Markov process. A natural question regarding this process is whether its corresponding \textit{jumping process} $(\Tilde{\M}_k)_{0\leq k\leq\binom{n}{2}}=(\M_{T_k})_{0\leq k\leq\binom{n}{2}}$ is a Markov chain. Since the times of jump of the birth Mallows process give strong indications regarding the likelihood of follow-up jumps, we do not expect this jumping process to have the Markov property. Computations for small values of $n$ seem to confirm our conjecture but we do not have a formal proof.

\begin{conj}
    For any $n\geq4$, the jumping process of the birth Mallows process is not a Markov chain.
\end{conj}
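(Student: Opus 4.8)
The plan is to identify the jumping process $(\M^B_{T_k})_{0\le k\le\binom n2}$ of the birth Mallows process with the embedded jump chain of the \emph{time-inhomogeneous} continuous-time Markov chain $\Xt{\M^B}$ from Proposition~\ref{prop:birthModel}. For a time-homogeneous continuous-time Markov chain the jump chain is automatically a Markov chain, since from a configuration the identity of the next jump is distributed proportionally to the outgoing rates; here the outgoing rate of the $j$-th coordinate at a configuration $b=(b_1,\dots,b_n)$ and time $t$ is $p_j(t,b_j)$, so the next jump from $b$ at time $t$ increments coordinate $j$ with probability $p_j(t,b_j)/\sum_i p_i(t,b_i)$, which genuinely depends on $t$, and $t=T_k$ is not a function of $\M^B_{T_k}$. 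The strategy is therefore to make this dependence on $T_k$ visible: exhibit a configuration reachable through two different trajectories which, conditionally on the trajectory, place $T_k$ in two different laws, and from which more than one jump is possible. For $n\le 3$ no such configuration exists — a direct enumeration of the (at most six) reachable configurations shows that every configuration of out-degree at least $2$ in the jump chain is reached by a single trajectory — which is exactly why the hypothesis $n\ge 4$ appears.

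For $n=4$ I would work at the third step. Let $\sigma^\ast=\Phi(0,1,0,1)=(2,1,4,3)$. It is reached after exactly two jumps, either via $\Phi(0,1,0,0)=(2,1,3,4)$ — in which case the second jump increments $\Inv_4$ — or via $\Phi(0,0,0,1)=(1,2,4,3)$ — in which case the second jump increments $\Inv_2$. From $\sigma^\ast$ exactly two jumps are possible: an increment of $\Inv_3$ (landing at $\Phi(0,1,1,1)$) or an increment of $\Inv_4$ (landing at $\Phi(0,1,0,2)$), and by Lemma~\ref{lem:birthModel} together with the Markov property of $\Xt{\M^B}$, conditionally on $T_2=t$ the former happens with probability
\[
\rho(t)=\frac{p_3(t,0)}{p_3(t,0)+p_4(t,1)}=\frac{(1+2t)(1+t+t^2+t^3)}{(1+2t)(1+t+t^2+t^3)+(2+4t+2t^2)(1+t+t^2)} .
\]
Since $\rho(0)=1/3$ whereas $\rho(t)\to 1/2$, the function $\rho$ is not constant. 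Hence
\[
\mathbb P\big(\M^B_{T_3}=\Phi(0,1,1,1)\,\big|\,\M^B_{T_2}=\sigma^\ast,\ \M^B_{T_1}=(2,1,3,4)\big)=\mathbb E\big[\rho(T_2)\,\big|\,\M^B_{T_1}=(2,1,3,4),\ \M^B_{T_2}=\sigma^\ast\big],
\]
and similarly with $(1,2,4,3)$ replacing $(2,1,3,4)$. Under the first conditioning $T_2$ is the first birth time of the $\Inv_4$-coordinate, under the second it is the unique birth time of the $\Inv_2$-coordinate; these have different laws, and it remains to check that the two corresponding expectations of $\rho(T_2)$ differ. If the jumping process were a Markov chain both would equal $\mathbb P(\M^B_{T_3}=\Phi(0,1,1,1)\mid\M^B_{T_2}=\sigma^\ast)$ — a contradiction.

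I expect this final check to be the main obstacle. Carrying it out amounts to writing down the joint law of the relevant birth times — which is recovered from Lemma~\ref{lem:birthModel} and the Markov property of each coordinate birth process — restricting to the two conditioning events (for instance, the first is $\{T^2_1<T^4_1<T^3_1\}$ and the second is $\{T^4_1<T^2_1<\min(T^3_1,T^4_2)\}$, writing $T^j_m$ for the $m$-th birth time of the $j$-th coordinate), and evaluating the two conditional expectations of $\rho(T_2)$; the resulting one-dimensional integrals are of rational functions (with arctangent and logarithmic contributions from coordinates that have more than one birth), hence elementary but bulky, and some care is needed to ensure the difference is genuinely nonzero rather than an accidental cancellation. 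For $n\ge 5$ the same scheme works without change, because the rates $p_j(t,\cdot)$ do not depend on $n$: one runs the argument at a configuration near the end of the process in which $\Inv_5,\dots,\Inv_n$ are already maximal, say $\Phi(0,1,1,2,4,5,\dots,n-1)$, so that again only increments of $\Inv_3$ or $\Inv_4$ are possible, with a time-dependent split; alternatively one reduces to $n=4$ by restricting the process to act on a suitable sub-permutation. In either case the crux is again the non-vanishing of an explicit conditional expectation.
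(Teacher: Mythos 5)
First, be aware that the paper does not prove this statement: it is posed as a conjecture, and the author explicitly writes that computations for small $n$ support it but that no formal proof is known. So you are attacking an open problem, and your proposal has to stand on its own. Your combinatorial setup is correct and well chosen: $\Phi(0,1,0,1)=(2,1,4,3)$ is reached after exactly two jumps via exactly the two histories you name, exactly two further jumps are possible from it, your rate computations $p_3(t,0)=\tfrac{1+2t}{1+t+t^2}$ and $p_4(t,1)=\tfrac{2+4t+2t^2}{1+t+t^2+t^3}$ are right, and your identification of the two conditioning events in terms of the birth times $T^j_m$ is accurate. The logical skeleton --- write $\mathbb{P}(\M^B_{T_3}=\cdot\mid\text{past})$ as $\mathbb{E}[\rho(T_2)\mid\text{past}]$ for a non-constant $\rho$ and show the two histories induce different conditional laws of $T_2$ --- is the natural route to disproving the Markov property.

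There are, however, two genuine gaps. First, your formula for $\rho(t)$ is wrong: for a \emph{time-inhomogeneous} jump process the probability that coordinate $3$ fires before coordinate $4$, starting at time $t$ from state $(0,1,0,1)$, is the competing-risks integral
\begin{align*}
\tilde\rho(t)=\int_t^\infty p_3(s,0)\exp\left(-\int_t^s\big(p_3(u,0)+p_4(u,1)\big)\,du\right)ds\,,
\end{align*}
not the instantaneous ratio $p_3(t,0)/(p_3(t,0)+p_4(t,1))$; the latter is only the embedded-chain probability for time-homogeneous chains. Your non-constancy check ($1/3$ at $t=0$ versus $1/2$ at infinity) must therefore be redone for $\tilde\rho$ (it does appear to survive --- numerically $\tilde\rho(0)\approx0.35$ while $\tilde\rho(t)\to1/2$ since both hazards decay like $2/t$ --- but this is not established in your write-up). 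Second, and more seriously, the step you yourself flag as the main obstacle is precisely the content of the conjecture: non-constancy of $\tilde\rho$ together with the fact that $T_2$ has different conditional laws under the two histories does not by itself imply that the two conditional expectations $\mathbb{E}[\tilde\rho(T_2)\mid\cdot]$ differ, and you do not evaluate or separate the two integrals. Until that computation (or a monotonicity/stochastic-ordering argument replacing it) is carried out, and until the $n\ge5$ reduction is made precise --- note that the projection removing the value $n$ shifts and decrements the inversion table on coordinates past $\M_t^{-1}(n)$, so it does not obviously carry the birth Mallows process on $\Sn$ to the one on $\mathcal{S}_{n-1}$ --- what you have is a credible and well-structured plan of attack, not a proof.
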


Assuming this conjecture holds, by the uniqueness of the birth Mallows process, it is impossible to have a regular Mallows process which is a Markov process and whose jumping process is a Markov chain. However, the existence of Mallows processes that are Markov processes and whose jumping processes are Markov chains remains open if we drop some assumptions. For example, computations on small values of $n$ seem to show that there exist smooth (but not regular) Mallows processes with this property. Moreover, computations led us to believe that such processes could be chosen with interesting graph structure.

\begin{conj}
    For any $n\geq4$, there exist smooth Mallows processes $\M=\Mt$ such that $\Mt$ is a Markov process and such that $(\Tilde{\M}_k)_{0\leq k\leq\binom{n}{2}}$ is a Markov chain. Moreover, any such process has the property that
    \begin{align*}
        \Ta\subsetneq\langle\G_\M\rangle\subseteq\T\,.
    \end{align*}
    Finally, there exist such processes with the property that $\langle\G_\M\rangle\subsetneq\T$.
\end{conj}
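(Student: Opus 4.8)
The plan is to handle the three assertions separately, building processes for the first and third and forcing the strict inclusion in the second.

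For existence I would use \emph{subordination}. First let $(K_t)_{t\in[0,\infty)}$ be the time-inhomogeneous pure birth process on $\{0,1,\ldots,\binom n2\}$ with marginals $\mathbb P(K_t=k)=S^k_n\,t^k/Z_{n,t}$, where $S^k_n=|\{\sigma:\Inv(\sigma)=k\}|$; exactly as in Lemma~\ref{lem:birthModel}, its rates are forced by the forward equation and a nonnegativity check analogous to that lemma shows it is well defined, so $K$ exists and is Markov. Next I would build an \emph{independent} discrete-time Markov chain $(\widetilde\M_k)_{0\le k\le\binom n2}$ on $\Sn$ with $\widetilde\M_0=\id$, whose one-step kernel $p_k$ is supported on the up-edges of $\mathcal H_n$ (single increments of the Lehmer code $\Phi^{-1}$) and which pushes the uniform law on $\{\sigma:\Inv(\sigma)=k\}$ to the uniform law on $\{\sigma:\Inv(\sigma)=k+1\}$. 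Setting $\M_t:=\widetilde\M_{K_t}$, the independence $K\perp\widetilde\M$ and uniformity on levels give
\[
\mathbb P(\M_t=\sigma)=\mathbb P\!\left(K_t=\Inv(\sigma)\right)\cdot\tfrac{1}{S^{\Inv(\sigma)}_n}=\tfrac{t^{\Inv(\sigma)}}{Z_{n,t}},
\]
so $\Mt$ has Mallows marginals; every jump is a single up-step of the Lehmer code, so $\Mt$ is smooth and strongly monotone; it is a Markov process because it is an independent composition of a Markov birth process and a Markov chain (the conditional law of the future given $\M_t=\sigma$ depends only on $K_t=\Inv(\sigma)$ and $\widetilde\M_{K_t}=\sigma$); and its jumping process is, by construction, the Markov chain $(\widetilde\M_k)$.

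The only nonroutine point in the construction is the existence of $p_k$. Here I would use that, under $\Phi^{-1}$, the edge set of $\mathcal H_n$ identifies $\Sn$ with the product of chains $\prod_{i=1}^n C_i$ (with $C_i$ the chain $0<1<\cdots<i-1$), as already recorded after the definition of $\mathcal H_n$. Products of chains admit a symmetric chain decomposition (de Bruijn--Tengbergen--Kruyswijk) and hence the normalized matching property; by Gale's transportation feasibility criterion this is precisely the condition $|N(A)|\ge (S^k_n/S^{k+1}_n)|A|$ for every $A$ in level $k+1$ (with $N(A)$ its set of lower neighbours) that guarantees a nonnegative kernel with row sums $1$ and the required column sums $S^k_n/S^{k+1}_n$. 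This produces $p_k$ and completes the existence statement.

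For the two-sided inclusion, $\Ta\subseteq\langle\G_\M\rangle\subseteq\T$ is immediate from Theorem~\ref{thm:MallowsGraph}, since any such $\M$ is smooth. For the strictness $\Ta\subsetneq\langle\G_\M\rangle$ I would argue by contradiction: assume every transition is an adjacent transposition, so $\G_\M$ lies in the permutahedron. Since $\Mt$ is Markov with a Markov jumping process, its rates must factor as $r(\sigma,\sigma',t)=R(\sigma,t)\,p(\sigma,\sigma')$ with $p$ time-independent (otherwise the next state would depend on the random jump time and the jumping process would fail to be Markov). Substituting $P_t(\sigma)=t^{\Inv(\sigma)}/Z_{n,t}$ into the forward equations yields, for each level $k$ and each $\sigma$ with $\Inv(\sigma)=k$,
\[
t^{k-1}\sum_{\tau\to\sigma}R(\tau,t)\,p(\tau,\sigma)-t^k R(\sigma,t)=Z_{n,t}\,C_k(t),
\]
with $C_k(t)$ independent of $\sigma$ and the sum over lower adjacent-transposition neighbours. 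I would show this system has no nonnegative solution supported on the permutahedron once $n\ge4$, while it does for $n=3$ (the hexagon): first solving the finite linear system directly for $n=4$ on the Mahonian levels $1,3,5,6,5,3,1$ to locate a bottleneck level where the adjacent-transposition bipartite graph cannot carry the flux dictated by the marginals, then propagating this by exhibiting the same four-value sub-configuration inside $\Sn$ for every $n\ge4$. This is where the real difficulty lies: making the infeasibility uniform in $n$, because the value-deletion reduction used elsewhere in this section need not preserve the Markov property of $\Mt$, so one genuinely needs a self-contained structural obstruction rather than an induction.

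Finally, for $\langle\G_\M\rangle\subsetneq\T$ I would reuse the subordination construction but choose the uniform-preserving kernel $p_k$ to avoid one whole transposition class. Concretely, fix a non-adjacent transposition, delete from $\mathcal H_n$ all up-edges whose associated transposition (in the sense of the proof of Theorem~\ref{thm:MallowsGraph}) equals it, and seek a uniform-to-uniform flow on the remaining graph. Any such flow yields, through the first part, a smooth Markov Mallows process with Markov jumping process whose generator omits that transposition, hence $\langle\G_\M\rangle\subsetneq\T$. The only thing left to verify is that the normalized-matching feasibility survives the deletion; since the product-of-chains poset is very richly connected this should hold for a suitably chosen transposition, and checking it is the comparatively mild obstacle for this part.
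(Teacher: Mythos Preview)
The statement you are addressing is a \emph{conjecture} in the paper, listed among the open problems in Section~\ref{sec:conclusion}; the paper offers no proof, only the remark that ``computations on small values of $n$ seem to show'' the existence claims. So there is nothing to compare against---your proposal is an attempt to settle an open problem, and should be read as such.

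Your construction for the first assertion is sound and essentially complete. The subordination $\M_t=\widetilde\M_{K_t}$ with $K\perp\widetilde\M$ does give a smooth Mallows process: the birth process $K$ exists because $\mathbb P(\Inv(\M_t)\ge k)$ is increasing in $t$ (stochastic monotonicity of the Mallows family), which forces the rates recovered from the forward equation to be nonnegative; the uniform-to-uniform kernel on levels of $\prod_i C_i$ exists by the normalized matching property of products of chains, which is classical. The Markov property of $\M_t$ follows because $K_t=\Inv(\M_t)$ is a deterministic function of $\M_t$, so the pair $(K_t,\widetilde\M_{K_t})$ is recoverable from $\M_t$ alone; and the jumping process is literally $(\widetilde\M_k)$. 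This part could be written out as a genuine proof with modest effort.

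The second assertion---that \emph{every} such process must satisfy $\Ta\subsetneq\langle\G_\M\rangle$---is where your proposal has a real gap. The factorization $r(\sigma,\sigma',t)=R(\sigma,t)\,p(\sigma,\sigma')$ is correctly deduced, but after that you only promise to ``solve the finite linear system directly for $n=4$'' and then ``propagate'' via ``the same four-value sub-configuration.'' Neither step is carried out, and the propagation mechanism is not specified; as you yourself note, the value-deletion map does not obviously preserve the Markov property, so there is no ready induction. Until an explicit obstruction is exhibited (even for $n=4$) and a device for transporting it to all $n\ge4$ is given, this remains a plan rather than a proof---which is consistent with the paper leaving it open.

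The third assertion inherits the machinery of the first, and your reduction to checking normalized matching on the edge-deleted poset is the right framing. But ``should hold for a suitably chosen transposition'' is not a proof: you would need to name the transposition and verify the Hall-type condition level by level, or exhibit the flow directly. This is indeed milder than the second part, but it is still a gap.
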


\subsection{Another Gaussian process}

In Section~\ref{subsec:uniformMallows}, we studied properties of the uniform Mallows process and showed in Theorem~\ref{thm:convergenceIt} that there was an embedded, asymptotically Gaussian process related to the amount of information of this process (more precisely a Brownian bridge).

In parallel, the birth Mallows process also appears to have a natural associated asymptotically Gaussian process, obtained by an appropriate normalization of the number of jumps. More precisely, let the \textit{number of jumps} of $\Mt$ be the process $\Xt{J}$ defined by $J_t=|\{k\leq\binom{n}{2}:T_k\leq t\}|$. Then, by applying the Lindeberg-Feller theorem~\cite[Theorem~3.4.10]{durrett2019probability}, it is straightforward to prove the following fact.

\begin{fact}
    As $n$ goes to infinity, for any $t\in[0,1)$, we have that
    \begin{align*}
        \frac{1}{\sqrt{n}}\Big[(1-t)\J_t-nt\Big]\overset{d}{\longrightarrow}\mathrm{Normal}(0,t)\,.
    \end{align*}
\end{fact}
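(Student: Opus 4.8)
The plan is to write $\J_t$ as a sum of independent indicator variables and apply the Lindeberg--Feller central limit theorem. Recall that $\J_t = |\{k \leq \binom{n}{2} : T_k \leq t\}| = \Inv(\M_t)$ whenever $\M$ is monotone, and in particular this holds for the birth Mallows process. Using the bijection $\Phi$ and the independent inversion property of a regular Mallows process, we have $\J_t = \sum_{j=1}^n \Inv_j(\M_t)$, where the summands are independent, and by Proposition~\ref{prop:generateMallows} each $\Inv_j(\M_t)$ has the explicit law $\mathbb{P}(\Inv_j(\M_t) = k) = t^k/\sum_{\ell=0}^{j-1} t^\ell$.

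First I would compute the first two moments. For fixed $t \in [0,1)$, summing the geometric-type distribution gives $\mathbb{E}[\Inv_j(\M_t)] = \sum_{k=0}^{j-1} k\, t^k / \sum_{\ell=0}^{j-1} t^\ell$; as $j \to \infty$ this converges to $\sum_{k \geq 0} k t^k (1-t) = t/(1-t)$, with the error being exponentially small in $j$. Hence $\mathbb{E}[\J_t] = nt/(1-t) + O(1)$, which matches the centering $nt/(1-t)$ after multiplying by $(1-t)$: indeed $(1-t)\mathbb{E}[\J_t] - nt = O(1)$. Similarly, $\mathrm{Var}(\Inv_j(\M_t)) \to t/(1-t)^2$ as $j \to \infty$, so $\mathrm{Var}(\J_t) = n\, t/(1-t)^2 + O(1)$, and therefore $\mathrm{Var}\big((1-t)\J_t\big) = (1-t)^2 \mathrm{Var}(\J_t) = nt + O(1)$. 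This identifies the limiting variance as $t$.

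Next I would verify the Lindeberg condition. Since $t < 1$ is fixed, the tails of $\Inv_j(\M_t)$ decay geometrically at a rate uniform in $j$, so all moments of the centered summands are uniformly bounded; in particular the third absolute moments are $O(1)$, and $\sum_{j=1}^n \mathbb{E}|\Inv_j(\M_t) - \mathbb{E}\Inv_j(\M_t)|^3 = O(n)$, while $\mathrm{Var}(\J_t)^{3/2} \asymp n^{3/2}$. Thus the Lyapunov (hence Lindeberg) condition holds, and \cite[Theorem~3.4.10]{durrett2019probability} yields $\big(\mathrm{Var}(\J_t)\big)^{-1/2}(\J_t - \mathbb{E}\J_t) \overset{d}{\to} \mathrm{Normal}(0,1)$. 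Rescaling by $(1-t)/\sqrt{n}$ and absorbing the $O(1)$ discrepancies between $\mathbb{E}\J_t$, $\mathrm{Var}\J_t$ and their leading terms (which vanish in the limit after division by $\sqrt{n}$) gives $\frac{1}{\sqrt n}\big[(1-t)\J_t - nt\big] \overset{d}{\to} \mathrm{Normal}(0,t)$.

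The computation is entirely routine; there is no real obstacle. The only point requiring a little care is bookkeeping the $O(1)$ corrections — both in the mean and the variance, coming from the finitely-many small indices $j$ where $\Inv_j(\M_t)$ has not yet reached its $j \to \infty$ limiting law — and checking they are genuinely $O(1)$ uniformly, which follows from the geometric tail bound $\mathbb{P}(\Inv_j(\M_t) \geq k) \leq t^k/(1-t)$. Everything else is a direct invocation of Lindeberg--Feller.
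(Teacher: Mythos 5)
Your argument is correct and is exactly the route the paper intends: it states this Fact without detailed proof, simply invoking the Lindeberg--Feller theorem of \cite[Theorem~3.4.10]{durrett2019probability} applied to the decomposition $\J_t=\Inv(\M_t)=\sum_{j=1}^n\Inv_j(\M_t)$ into independent truncated-geometric summands, which is precisely what you carry out. Your bookkeeping of the $O(1)$ corrections to the mean and variance and the uniform geometric tail bound justifying the Lyapunov condition are the right details to supply.
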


This suggests that $([(1-t)J_t-nt]/\sqrt{n})_{t\in[0,1)}$ converges to some Gaussian process. Proving that this process indeed converges, and characterizing its distribution remains open.

\begin{conj}
    As $n$ goes to infinity we have that
    \begin{align*}
        \left(\frac{1}{\sqrt{n}}\Big[(1-t)\J_t-nt\Big]\right)_{t\in[0,1)}\overset{d}{\longrightarrow}(G_t)_{t\in[0,1)}\,,
    \end{align*}
    where $(G_t)_{t\in[0,1)}$ is a centred Gaussian and Markovian process.
\end{conj}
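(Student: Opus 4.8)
The plan is to prove the slightly stronger statement that $(G_t)_{t\in[0,1)}$ is a \emph{standard Brownian motion} --- which is \emph{a fortiori} centred Gaussian and Markovian --- by establishing convergence of finite-dimensional distributions together with tightness. Throughout I work with the birth Mallows process, writing $Y_n(t)=\tfrac1{\sqrt n}\big[(1-t)\J_t-nt\big]$; since that process is monotone we have $\J_t=\Inv(\M^B_t)=\sum_{j=1}^n\Inv_j(\M^B_t)=\sum_{j=1}^nB_j(t)$, where $B_1,\dots,B_n$ are the \emph{independent} time-inhomogeneous birth processes of Lemma~\ref{lem:birthModel}. Restricting to the birth Mallows process is essential: the one-dimensional law of $\J_t$ is the same for every regular Mallows process, but the joint law along several times is not, and e.g.\ the uniform Mallows process of Definition~\ref{def:uniformModel} couples $\Inv_j(\M_s)$ and $\Inv_j(\M_t)$ comonotonically, which should produce a larger covariance and a non-Markovian Gaussian limit.

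For the finite-dimensional distributions, fix $0\le t_1<\dots<t_m<1$ and $a\in\mathbb R^m$. Then $\sum_i a_iY_n(t_i)=\tfrac1{\sqrt n}\sum_{j=1}^n(V_j-\mathbb EV_j)+o(1)$, where $V_j=\sum_i a_i(1-t_i)B_j(t_i)$ are independent across $j$ and the $o(1)$ absorbs the deterministic mean discrepancy, which is $O(1/\sqrt n)$ because $\mathbb E[B_j(t)]=\tfrac t{1-t}+O(jt^{\,j})$ uniformly on compacts of $[0,1)$ and $\sum_j jt^{\,j}<\infty$. The uniform sub-geometric tail bound $\mathbb P(B_j(t)\ge k)\le t^k/(1-t)$ gives uniformly bounded fourth moments of the $V_j$, hence the Lindeberg condition, so each such linear combination converges to a centred Gaussian as soon as its variance converges; by Cram\'er--Wold this reduces the whole matter to computing $\lim_{n}\mathrm{Cov}(Y_n(s),Y_n(t))$ for $s\le t$.

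This covariance is the heart of the argument. By independence $\mathrm{Cov}(Y_n(s),Y_n(t))=\tfrac{(1-s)(1-t)}n\sum_{j=1}^n\mathrm{Cov}(B_j(s),B_j(t))$, so it suffices to determine $\lim_{j\to\infty}(1-s)(1-t)\mathrm{Cov}(B_j(s),B_j(t))$ and Ces\`aro-average. Letting $j\to\infty$ in the formula for $p_j$ from Lemma~\ref{lem:birthModel} gives $p_j(t,k)\to p_\infty(t,k):=\tfrac{k+1}{1-t}$, and (by convergence of rates plus non-explosion) $B_j$ converges, on every compact subinterval of $[0,1)$, to the birth process $B_\infty$ on $\mathbb Z_{\ge0}$ with these rates. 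Under the time change $\tau=-\log(1-t)$ the process $B_\infty+1$ becomes a Yule process started from one individual, so by the branching property $\mathbb E[B_\infty(t)\mid\mathcal F_s]=(B_\infty(s)+1)\tfrac{1-s}{1-t}-1$, and a short computation yields $\mathrm{Cov}(B_\infty(s),B_\infty(t))=\tfrac s{(1-s)(1-t)}$ for $s\le t$. Hence each summand $(1-s)(1-t)\mathrm{Cov}(B_j(s),B_j(t))\to s$, the Ces\`aro average tends to $s$, and $\mathrm{Cov}(Y_n(s),Y_n(t))\to s\wedge t$: the Brownian covariance, so the finite-dimensional distributions of $Y_n$ converge to those of a standard Brownian motion. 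I expect this step to be the main obstacle: the two-time transition kernel of the inhomogeneous $B_j$ has no closed form, which is why one must pass to the $j=\infty$ process and exploit its Yule/branching structure, and one then has to check carefully that (i) $B_j\Rightarrow B_\infty$ on $[0,s]$, (ii) this upgrades to convergence of second moments (uniform integrability, from the sub-geometric tails), and (iii) the $O(1)$-size corrections in the Ces\`aro sum are negligible after dividing by $n$ --- all estimates that deteriorate as $s\to1$.

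Finally, tightness. Fix $s<1$ and work on $C([0,s])$. On $[0,s]$ the increment $B_j(t)-B_j(u)$ is a counting process whose intensity is bounded by $(B_j+1)/(1-s)$, which together with the sub-geometric tails gives $\mathbb E\big[(B_j(t)-B_j(u))^p\big]\le C_s(t-u)$ for $p\le4$ uniformly in $j$; combining this with independence over $j$ yields $\mathbb E\big[(Y_n(t)-Y_n(u))^4\big]\le C_s(t-u)^2$ whenever $t-u\ge 1/n$. Since moreover the jumps of $(1-t)\J_t$, hence of $Y_n$, have size at most $1/\sqrt n\to0$, this gives tightness in $D([0,s])$ with all subsequential limits continuous, hence tightness in $C([0,s])$; together with the finite-dimensional convergence we conclude $Y_n\to(G_t)$ in $C([0,s])$ for every $s<1$, that is, locally uniformly on $[0,1)$, with $(G_t)$ a standard Brownian motion. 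The constants $C_s$ blow up as $s\to1$ --- consistent with $\mathbb E[B_j(1)]=(j-1)/2$ and $\mathrm{Var}(\J_1)\asymp n^3$ --- which is exactly why the limit can only live on $[0,1)$.
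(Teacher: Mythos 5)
The paper does not prove this statement: it is posed as an open conjecture, and the surrounding text explicitly says that proving the convergence and characterizing the limit ``remains open''. So there is no proof of record to compare yours against; I can only assess your argument on its own terms, and in outline it looks right. The central computation is correct as far as I can verify it: for the birth Mallows process one has $\J_t=\sum_{j=1}^nB_j(t)$ with independent coordinates; in the rate $p_j(t,k)$ of Lemma~\ref{lem:birthModel} the correction term $j\sum_{\ell=j-k-2}^{j-2}(\ell-j+k+2)t^\ell$ is $O(jt^{j-k-2})\to0$ for fixed $t<1$, so $p_j(t,k)\to(k+1)/(1-t)$; the limiting process is a Yule process after the time change $\tau=-\log(1-t)$; and the branching property gives $\mathrm{Cov}(B_\infty(s),B_\infty(t))=s/((1-s)(1-t))$, hence $\mathrm{Cov}(Y_n(s),Y_n(t))\to s\wedge t$. (Equivalently, $(1-t)(B_\infty(t)+1)-1$ is a mean-zero martingale with variance $t$, which makes the Brownian limit transparent and suggests a martingale-FCLT route.) This would identify $G$ as standard Brownian motion, strictly stronger than the conjectured ``centred Gaussian and Markovian''. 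Your observation that the answer depends on which regular Mallows process one takes is also apt: the comonotone coupling inside the uniform Mallows process maximizes $\mathrm{Cov}(\Inv_j(\M_s),\Inv_j(\M_t))$, so the limit there would have a strictly larger covariance and need not be Markovian.

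That said, two steps are asserted rather than proved, and both require genuine work. First, for finite $j$ the process $B_j$ is \emph{not} a time-changed Yule process -- only its rates converge -- so the identity $\mathrm{Cov}(B_j(s),B_j(t))\to s/((1-s)(1-t))$ rests on (a) weak convergence $B_j\Rightarrow B_\infty$ on $[0,s]$ deduced from convergence of time-inhomogeneous generators, and (b) an upgrade to convergence of second moments via the uniform tail bound $\mathbb{P}(B_j(t)\ge k)\le t^k/(1-t)$; you name both ingredients but neither is carried out, and (a) in particular needs a non-explosion and generator-convergence argument written down. Second, the tightness estimate $\mathbb{E}[(Y_n(t)-Y_n(u))^4]\le C_s(t-u)^2$ genuinely fails for $t-u\ll1/n$ (the diagonal term $n^{-1}C_s(t-u)$ dominates there), so the Kolmogorov--Chentsov criterion does not apply directly; you must instead use a tightness criterion for $D([0,s])$ based on three-point increment control combined with the fact that the jumps of $Y_n$ have size $O(n^{-1/2})$, and you still owe the increment bound $\mathbb{E}[(B_j(t)-B_j(u))^4]\le C_s(t-u)$, which requires dominating the counting increment by a process of intensity $C_s(B_j+1)$ uniformly in $j$. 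Neither issue looks fatal, but as written this is a convincing programme for resolving the conjecture rather than a complete proof.
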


\subsection{Relation to random sorting networks}

From the results of Theorem~\ref{thm:MallowsGraph}, we know that any smooth Mallows process $\M=\Mt$ transitions using transpositions. This means that for any $\Mt$, there exists $\tau_1,\ldots,\tau_N\in\T$ where $N=\binom{n}{2}$ such that $\M_{T_k}=\tau_1\cdot\ldots\cdot\tau_k$, which implies that $\tau_1\cdot\ldots\cdot\tau_N=(n,n-1,\ldots,1)$. This naturally suggests an investigation of the relation between Mallows processes and sorting networks.

\textit{Sorting networks} are sequences $\tau_1,\ldots,\tau_N\in\Ta$ such that $\tau_1\cdot\ldots\cdot\tau_N=(n,n-1,\ldots,1)$. They were first considered by Stanley~\cite{stanley1984number}, who managed to give an exact formula for the number of such sorting networks of size $n$. Since then, further work on proving this formula through diverse methods was developed~\cite{edelman1987balanced,lascoux1982structure}, and these works recently led several authors~\cite{angel2019local,angel2007random,dauvergne2021archimedean,dauvergne2020circular} to study properties of uniformly sampled \textit{random sorting networks}. In these works, various results of random sorting networks are proven, such as the asymptotic distribution of $\tau_1$, a convergence for the scaled swap process, and properties of the trajectories $(\tau_1\cdot\ldots\cdot\tau_k(i))_{0\leq k\leq N}$ as $k$ varies from $0$ to $N$ and for a fixed $i\in[n]$. Mallows processes do not directly correspond to sorting networks, since they might use transpositions in $\T\setminus\Ta$, but many of the properties of the sorting networks that have previously been studied have natural analogues for Mallows processes and it would be natural to investigate these in more detail.

\section*{Acknowledgements}

BC wishes to thank his supervisor, Louigi Addario-Berry, for his help with the general structure and presentation of this paper, along with Orph\'ee Collin for early discussions on the problem. During the preparation of this research, BC was supported by an ISM scholarship.

\medskip
\bibliographystyle{siam}
\bibliography{main}

\end{document}